\numberwithin{equation}{section}
\newtheorem{satz}{Satz}[section] 
\newtheorem{definition}[satz]{Definition}
\newtheorem{theorem}[satz]{Theorem}
\newtheorem{lemma}[satz]{Lemma}
\newtheorem{proposition}[satz]{Proposition}
\numberwithin{equation}{section}
\numberwithin{figure}{section}
\numberwithin{table}{section}
\numberwithin{algorithm}{section}
\newtheorem{remark}[satz]{Remark}
\newtheorem{example}[satz]{Example}
\DeclareMathOperator{\E}{{\mathbb E}}
\DeclareMathOperator{\R}{{\mathbb R}}
\DeclareMathOperator{\PP}{{\mathbb P}}
\DeclareMathOperator{\trace}{trace}
 \DeclareMathOperator{\rank}{rank}
 \DeclareMathOperator{\Id}{Id}
\DeclareMathOperator{\argmin}{argmin}
\DeclareMathOperator{\Var}{Var}
\providecommand{\eps}{\varepsilon}
\renewcommand{\phi}{\varphi}
\renewcommand{\theta}{\vartheta}
\renewcommand{\subset}{\subseteq}
\renewcommand{\cdot}{{\scriptstyle \bullet} }
\providecommand{\abs}[1]{\lvert #1 \rvert}
\providecommand{\norm}[1]{\lVert #1 \rVert}
\providecommand{\babs}[1]{{\Bigl\lvert #1 \Bigr\rvert}}
\providecommand{\scapro}[2]{\langle #1,#2 \rangle}
\providecommand{\floor}[1]{\lfloor #1 \rfloor}
\renewcommand{\le}{\leqslant}
\renewcommand{\ge}{\geqslant}
\begin{document}

\def\spacingset#1{\renewcommand{\baselinestretch}%
{#1}\small\normalsize} \spacingset{1}



\title{\bf Early Stopping for Regression Trees}
\author{Ratmir Miftachov\thanks{Institute of Mathematics, School of Business and Economics, Humboldt-Universität zu Berlin, Germany}\and Markus Reiß\thanks{Institute of Mathematics, Humboldt-Universität zu Berlin, Germany}}  
\maketitle
\bigskip
\begin{abstract}
We develop early stopping rules for growing regression tree estimators. The fully data-driven stopping rule is based on monitoring the global residual norm. The best-first search and the breadth-first search algorithms together with linear interpolation give rise to generalized projection or regularization flows. A general theory of early stopping is established. Oracle inequalities for the early-stopped regression tree are derived without any smoothness assumption on the regression function, assuming the original CART splitting rule, yet with a much broader scope. The remainder terms are of smaller order than the best achievable rates for Lipschitz functions in dimension $d\ge 2$. In real and synthetic data the early stopping regression tree estimators attain the statistical performance of cost-complexity pruning while significantly reducing computational costs.
\end{abstract}

\noindent%
{\it Keywords:} Early stopping, regression tree, breadth-first search, best-first search, pruning, projection flow, oracle inequalities
\vfill

\newpage
\spacingset{1.4}

\section{Introduction}
Classification and regression trees (CART) form a class of very popular statistical learning algorithms. Initially introduced by \citet{breiman1984classification},  CART serve as the foundation for more advanced machine learning techniques, including 
random forests \citep{breiman2001random}, XGBoost \citep{chen2016xgboost}, and others. Remarkably, many of the winning solutions for tabular data competitions on Kaggle.com rely on advanced variations of Breiman's decision tree algorithm. Because of their clear interpretability and their stability, practitioners still often prefer the basic CART methods to these more complex algorithms.


We focus on regression trees, which are nonparametric estimators constructed by an iterative and greedy data-fitting algorithm. Typically, iterative estimators require a data-adaptive choice of the iteration number to efficiently prevent from over- and underfitting. The current folklore in practice for decision trees is  \textit{post-pruning}, as initially proposed by \citet{breiman1984classification}. This bottom-up method involves growing the tree to its full depth, which interpolates the data, and subsequently cutting it back iteratively to an optimal subtree using cross-validation. Oracle-type inequalities on the pruned tree have been established by \citet{gey2005model}. In recent work on asymptotic properties, trees are generally grown to their maximum possible depth \citep{biau2012analysis, scornet2015consistency}. Alternatively, the depth is controlled indirectly by limiting the number of observations in the terminal nodes \citep{breiman1984classification} or growing the tree until a prespecified depth is reached \citep{klusowski2023large}. These techniques are commonly referred to as \textit{pre-pruning} in the applied decision tree literature. The principle of early stopping as implicit regularization is, however, a cornerstone of modern machine learning \citep{goodfellow2016deep}.

A common heuristic is to halt tree growth if the decrease in impurity (see Section \ref{sec:cart_algo} for details) between two subsequent tree depths falls below a certain threshold \citep{breiman1984classification, hastie2017elements}, a local rule which is used in software like scikit-learn \citep{pedregosa2011scikit}. 
The threshold should be chosen such that, with high probability, no further splits are conducted in the pure noise case (i.e., when $f$ is constant on the node), thus it should have the order of the noise variance $\sigma^2$.
As is well known, this rule fails on the extreme test case of the XOR problem, illustrated in Figure \ref{fig:XOR}. The greedy CART algorithm based on the local impurity gain terminates at the root node, without conducting any split. It predicts solely the unconditional mean of the response. In contrast, our proposed early stopping technique overcomes this shortcut, since it forces the algorithm to split until the residuals are small enough.





\begin{figure}[tbp]
    \centering
    \includegraphics[width=0.9\textwidth]{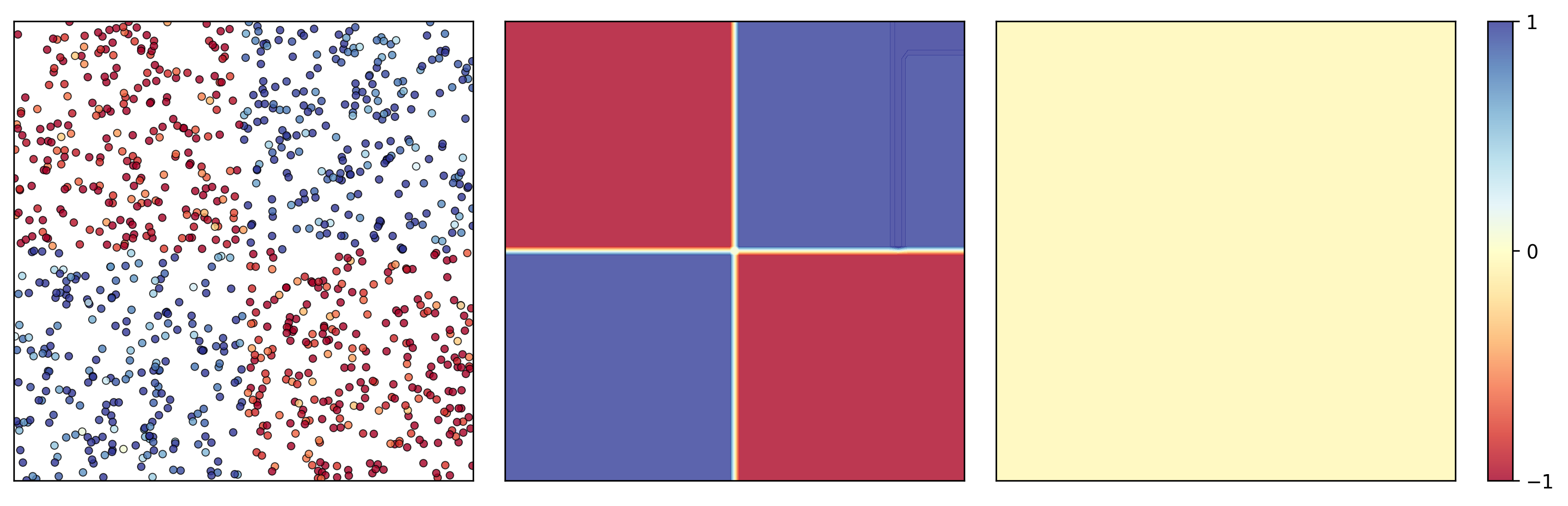}
    \caption{Left: $Y_i=f(X_i)+\varepsilon_i \text{ with } f(x)=\operatorname{sign}(x_1)\operatorname{sign}(x_2)$ and $\varepsilon_i\sim N(0,0.1)$, where $X_i\sim U(-1,1)^2$. Center: semi-global early stopped regression tree (with 11 terminal nodes). Right: local pre-pruned regression tree with a minimum impurity decrease of $0.1$.}
    \label{fig:XOR}
\end{figure}

The current state of the art on tuning the decision tree depth, the post-pruning, is computationally very costly. We consider a fundamentally different approach in which the decision tree is grown iteratively and stopped early using an entirely data-driven stopping rule, usually far before the tree is fully grown. Unlike previous pre-pruning (or early stopping) methods, our stopping rule monitors the global residual norm, as for the discrepancy principle for inverse problems \citep{englregularization} and its statistical applications for linear estimators \citep{blanchard2012discrepancy}. 

Let us list our main contributions:
\begin{itemize}
\item We interpret the regression tree iterates as a special instance of piecewise constant estimators on a sequence of refining partitions. Using linear interpolation between consecutive iterates, we put them in the framework of \textit{generalized projection flows}. For this general concept, which also encompasses other popular smoothing estimators like ridge regression or gradient flow, we build a general theory of early stopping based on the residual norm. We derive an $\omega$-wise, that is non-probabilistic, error decomposition and an oracle inequality, comparing to the theoretically best estimator along the flow.
\item We specify the error terms for the breadth-first and best-first regression tree building, which refine partitions globally and locally, respectively. Given the globally defined residual norm, they give rise to the global and semi-global early stopping methods for trees. A main mathematical challenge is the bound for the cross term for the data-driven splitting in CART. The final oracle inequalities show that the error due to data-driven early stopping is typically smaller than the oracle error so that the early stopping estimator becomes statistically adaptive.
     
\item Based on a nearest neighbour estimate of the noise level, we provide ready-to-use algorithms and apply them to standard data sets and in simulations. We implement our stopping algorithm in a Python library, see \citet{ziebell2025earlystopping}. In practice,  early stopping reduces the computational run time by a factor of 10 to 20 while being on par with the performance of the state-of-the-art pruning methods. The early stopped regression tree usually comes with a small number of nodes, thus leading to a sparse representation and good interpretability. 

\end{itemize}

This work bridges the decision tree and random forest literature with the early stopping literature based on the discrepancy principle. 
Recent advances in the iterative early stopping literature have evolved from statistical inverse problems such as \citet{blanchard2018early, blanchard2018optimal} for linear estimators. \citet{stankewitz2024early} has applied the early stopping methodology for $L^2$-boosting in the setting of high-dimensional linear models. Our general framework unifies arguments from these papers and opens up an even wider scope of applications. Conceptually, our projection flow approach is inspired by the general theory for histogram regression estimators of \citet{nobel1996histogram}.


Consistency for the random forest under Breiman's greedy splitting is proven by \citet{scornet2015consistency} under the additive model assumption; yet, convergence rates are not provided. \citet{chi2022asymptotic} prove consistency for the original Breiman algorithm under further assumptions in a high-dimensional setting with a polynomial convergence rate in $n$. Subsequently, under the assumption of an additive model, \citet{klusowski2023large} achieve a logarithmic rate for the regression tree. They control the tree depth using a depth parameter, but do not provide data-driven guidelines for choosing the tree depth. 

The literature has tried to reduce the splitting complexity by working with simplified versions of the regression tree. 
For the purely uniform random forest, \citet{genuer2012variance} provides a convergence rate of $n^{-2/3}$ under Lipschitz conditions in dimension one. Further convergence rates are given in \citet{biau2012analysis} and \citet{arlot2014analysis} for the centered forest under Hölder class assumptions for $d\geq 1$. Subsequently, \citet{klusowski2021sharp} improves the rate of \citet{biau2012analysis} for the centered forest, yet not attaining the minimax optimal rate for Lipschitz functions and $d>1$. Further work on the Mondrian forest achieves minimax optimality under appropriate tuning parameter selection \citep{cattaneo2023inference}. Here, we show that the dominant term in our oracle inequality for the data-independent case is negligible with respect to these rates. In further research, the $\alpha$-fraction constraint was introduced in \citet{wager2015adaptive} and utilized in consistency results of recent work in \citet{athey2019generalized} and \citet{wager2018estimation} in 'honest trees'. \citet{cattaneo2024convergence} develop a theory for oblique decision trees, where the splits are not axis-aligned. \citet{cattaneo2023inference} show that the Mondrian random forest is minimax optimal. An extensive overview of the convergence rates for various random forest versions is given in \citet{zhang2024adaptive}. 

We proceed as follows. In Section \ref{sec:stats}, we define the statistical setting and introduce the regression tree algorithm. Section \ref{sec:framework} provides the theory in a unifying framework, beginning with refined orthogonal projections and proceeding to the generalized projection flows in Section \ref{sec:gen_flow}. First, Section \ref{SecOI} provides a general error decomposition and oracle inequality. Then, specifying to global and semi-global early stopping for regression trees, our main results, oracle inequalities under the original CART algorithm, are established in Section \ref{sec:main}.
In Section \ref{sec:implementation}, we elaborate on the implementation of our algorithms and apply them to standard datasets. In Section \ref{sec:sim}, we compare the early-stopped estimator with post-pruning in simulations. Section \ref{sec:discussion} concludes the paper with a short discussion. All proofs and further simulation results are delegated to the Appendix.


\section{Regression trees}
\label{sec:stats}

\subsection{Regression model}

Let the $d$-dimensional covariates be denoted as $X_i=(X_{i,1},\ldots,X_{i,d})^{\top}\in\mathcal{X}\subseteq\mathbb{R}^{d}$, with real-valued response variables $Y_i\in\mathbb{R}$ for $i=1,\ldots,n$. We assume $(X_i, Y_i) \overset{\mathrm{iid}}{\sim} \mathbb{P}$ for some distribution $\mathbb{P}$.
 Consider the  nonparametric regression setting
\begin{align}
    Y_i=f(X_i)+\varepsilon_i,\quad i&=1,\ldots,n,  \text{ with } \E[\varepsilon_i\,|\,X_i]=0,\, \Var(\varepsilon_i\,|\,X_i)=\sigma^2.\label{EqRegr}
\end{align}
The target of estimation is the regression function  $f(x)=\E[Y|X=x]$. We assume throughout that the design points $X_1,\ldots,X_n$ are a.s. distinct.
For functions $g:\R^d\to\R$ we consider their empirical norm
$\norm{g}_n:=\Big(\frac1n\sum_{i=1}^ng(X_i)^2\Big)^{1/2}$,
the corresponding scalar product $\scapro{\cdot}{\cdot}_n$ and the associated $n$-dimensional function space $L_n^2$, obtained by identifying $g_1,g_2:\R^d\to\R$ with $\norm{g_1-g_2}_n=0$, that is coinciding on the design. The vectors $(Y_i)$, $(\eps_i)$ can  be lifted from $\R^n$ to $L_n^2$ via $Y(X_i)=Y_i$, $\eps(X_i)=\eps_i$ and, similarly, $g\in L^2_n$ is encoded by $(g(X_i))_{i=1,\ldots,n}\in\R^n$. In this sense, $L_n^2$ is identified with $\R^n$.

\subsection{Regression tree algorithm}
\label{sec:cart_algo}

We follow the classical algorithm introduced by \citet{breiman1984classification}, a greedy tree-growing procedure where the nodes identify subsets $A$ of $\R^d$.
The CART algorithm starts with a parent node $A=\mathcal{X}$, recursively partitioned into non-overlapping $d$-dimensional (hyper-)rectangles. The partitioning begins by selecting a coordinate $x_j$, $j=1,\ldots,d$, and a threshold $c\in\R$, dividing $A=\mathcal{X}$ into the (in coordinate $j$) left child node $A_L = \{x \in A : x_j < c\}$ and the right child node $A_R = \{x \in A : x_j \geq c\}$. Here, all sets in a partition are required to contain at least one design point $X_i$, sets containing only one $X_i$ are no longer split. The child nodes $A_L,A_R$ then serve as parent nodes for subsequent iterations, continuing the recursive partitioning process. At each instance, the {\it terminal nodes} (nodes without children) define a {\it partition} of $\R^d$, which is refined as the tree grows.

The CART algorithm is characterized by a data-driven splitting rule. Consider a generic parent node $A$ and let $n_A$ be its local sample size, that is the number of covariates $X_i$ with $X_i\in A$. We define the splitting criterion or impurity gain as
\begin{equation*}
L_A(j, c) = R^2(A) - \frac{n_{A_L}}{n_A} R^2(A_L) - \frac{n_{A_R}}{n_A} R^2(A_R), 
\end{equation*}
where the impurity for regression is set as the average squared residuals within a node (also called \textit{impurity})
\begin{equation*}
    R^2(A) = \frac{1}{n_A} \sum_{i:X_i \in A}\left(Y_i-\bar{Y}_{A}\right)^2,
\end{equation*}
with $\bar{Y}_{A} = \frac{1}{n_A} \sum_{i:X_i \in A} Y_i$ being the node average. The maximum impurity gain and the corresponding splitting rule are given by
\begin{equation}
    \mathbb{IG}(A)= \max_{(j,c)} L_A(j, c), \quad  (j^*, c^*) \in \underset{(j,c)}{\operatorname{argmax}}\ L_A(j, c),  \label{eq:opt_split}
\end{equation}
respectively. The regression tree is thus based on a greedy minimization of the training error in each split. Other splitting variants include splitting each node at the empirical median \citep{duroux2018impact} or a randomly chosen threshold \citep{genuer2012variance}, among others.

Often, a stopping rule for the tree depth is applied to find a balance between underfitting and overfitting of the piecewise constant estimator based on the partition defined by the terminal nodes. 
Common approaches that are referred to as pre-pruning (or early stopping) in the applied literature include growing the tree until it reaches a pre-specified depth \citep{klusowski2021sharp} or stopping the splitting process at a node when it contains a single observation, whichever occurs first (see, e.g., \citet{klusowski2023large}). Another indirect way to control tree depth is by limiting the maximum number of observations allowed in the terminal nodes \citep{breiman1984classification}. In contrast to these strategies, \citet{breiman1984classification} introduced cost-complexity post-pruning, a popular bottom-up approach that we use as a benchmark in our empirical analysis (see Section \ref{sec:pruning}). Note that also standard sequential model selection procedures like Lepski's method start in the low bias situation and then gradually increase the bias, see e.g. \citet{lepski1997optimal}, and thus require a bottom-up approach in our tree framework.
In this work, we introduce early stopping based on the discrepancy principle, which fundamentally differs from the previous stopping methods. It is a fully data-adaptive top-down approach without relying on cross-validation.

There are at least three common sequential ordering mechanisms to build a decision tree. This defines an order to which we apply the residual-based early stopping rule. Initially, the original CART algorithm is based on a depth-first search principle to grow the tree. However, this approach is statistically unsuitable for implementing our stopping condition, as Breiman's tree grows each branch to a pre-specified depth. In contrast, we propose combining two distinct ordering mechanisms, the breadth-first search and the best-first search, with our data-driven stopping rule. We call these approaches \textit{global} early stopping and \textit{semi-global} early stopping, respectively.
In the global stopping procedure, all nodes at a given generation (i.e., level or depth) are split simultaneously before progressing to the next generation.  On the other hand, the semi-global procedure splits one node at a time, selecting the next node to split based on the largest impurity gain, $\mathbb{IG}(A)$, across all partitions of the tree. Both stopping methods are illustrated in Figure \ref{fig:trees_plots}. The following chapter presents a unified mathematical framework for these two proposed stopping variants.



\begin{figure}[tbp]
    \centering
    \begin{subfigure}[b]{0.4\textwidth}
        \centering
        \hspace{-2cm}
  \includegraphics[width=\textwidth]{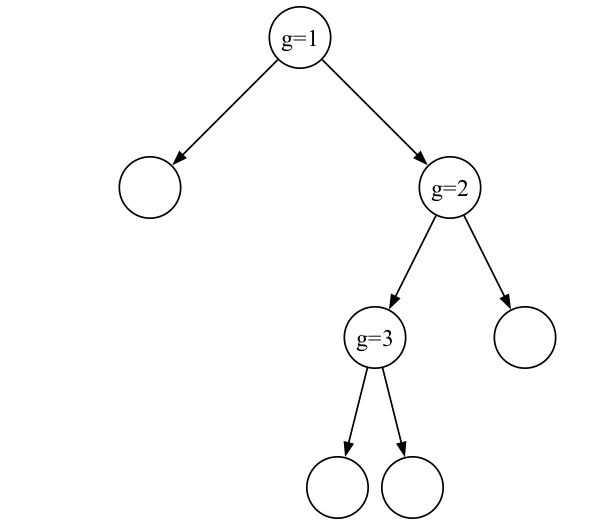}
    \end{subfigure}
    \hspace{0cm}
    \begin{subfigure}[b]{0.4\textwidth}
        \centering
  \includegraphics[width=\textwidth]{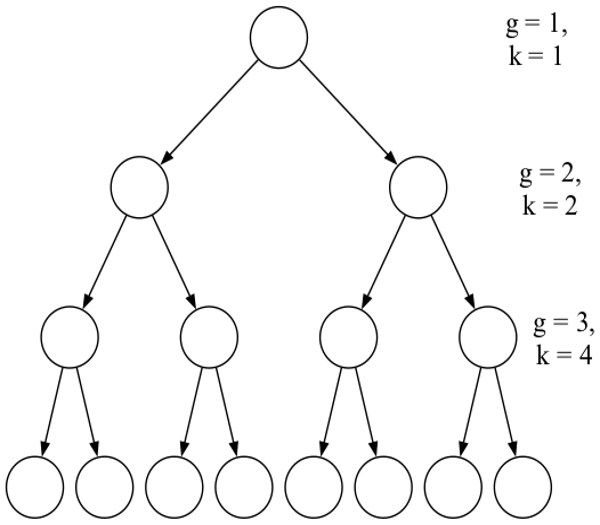}
    \end{subfigure}
    \caption{Exemplary semi-global (left) and global (right) tree structure after three splitting iterations/generations. The global stopped tree relies on  breadth-first search, whereas the semi-global stopped tree is based on best-first search. The generation/iteration is denoted by $g$, and the number of terminal nodes is denoted by $k$. }
    \label{fig:trees_plots}
\end{figure}

\newpage
\section{A unifying framework}
\label{sec:framework}

\subsection{Refining orthogonal projections}
Let us consider general partitions $P=\{A_1,\ldots,A_m\}$ of $\mathcal{X}\subset\R^d$ such that $\mathcal{X}=\bigcup_{k=1}^mA_k$, the $A_k$ are pairwise disjoint and contain each at least one design point $X_i$, which requires $m\le n$. Such a partition $P$ generates the $m$-dimensional subspace
\begin{equation*}
 V_P=\{f:\mathcal{X}\to\R : f|_{A_k}=\text{constant for all }k=1,\ldots,m\}\subset L_n^2
\end{equation*}
and the orthogonal projection $\Pi_P:L^2_n\to V_P$ onto this subspace. Thus,
\begin{equation*}
\Pi_Pf(x)=\bar f_{A_k}:=\frac{1}{n_{A_k}}\sum_{i:X_i\in A_k}f(X_i)\text{ for } x\in A_k
\end{equation*}
yields the  average of $f(X_i)$ for the $X_i$ in the set $A_k$ containing $x$. As an orthogonal projection $\Pi_P$ is selfadjoint, positive semi-definite and satisfies $\Pi_P^2=\Pi_P$, $\norm{\Pi_P}=1$ and $\trace(\Pi_P)=m$. In principle, $f|_{A_k}$ need not be a constant function for $k=1,\ldots,m$. 


Now fix some integer dimensions $0< k_1<\cdots<k_G=n$ and
suppose that the sequence $P_{k}=\{A_1^{(k)},\ldots,A_{k}^{(k)}\}$, $k\in\{k_1,\ldots,k_G\}$, of partitions is refining in the sense that each $A_i^{(k_{g+1})}$ is contained in some $A_j^{(k_g)}$, where $g=0,\ldots,G-1$. Write $V_{{k_g}}=V_{P_{k_g}}$ and $\Pi_{{k_{g}}}=\Pi_{P_{k_{g}}}$ for short and set $k_0=0$, $V_{k_0}=\{0\}$, $\Pi_{k_0}=0$. Then we have the inclusion $V_{{k_g}}\subset V_{{k_{g+1}}}$ and thus $\Pi_{{k_g}}\Pi_{{k_{g+1}}}=\Pi_{{k_{g+1}}}\Pi_{{k_{g}}}=\Pi_{{k_g}}$ for the projections. Moreover, $\Pi_{{k_g}}\preceq\Pi_{{k_{g+1}}}$ holds, in the sense that $\Pi_{{k_{g+1}}}-\Pi_{{k_g}}$ is positive semi-definite, because
\[ \scapro{(\Pi_{{k_{g+1}}}-\Pi_{{k_g}})f}{f}_n=\scapro{(\Pi_{{k_{g+1}}}-\Pi_{{k_g}})^2f}{f}_n =\norm{(\Pi_{{k_{g+1}}}-\Pi_{{k_g}})f}_n^2\ge0.
\]

\begin{definition}\label{def:orth}
For the orthogonal projections $\Pi_{k_g}$, $g=0,\ldots,G$, consider the corresponding estimators and residuals (or in-sample training errors)
\[ \hat F_{k_g}=\Pi_{k_g} Y,\quad R_{k_g}^2=\norm{Y-\hat F_{k_g}}_n^2=\norm{(\Id-\Pi_{k_g})Y}_n^2.\]
Our non-interpolated stopping rule is given by 
\[ \hat g = \inf \{g \in \{0, \ldots, G\} \,|\, R_{k_g}^2 \leq \kappa \}\]
for some threshold value $\kappa>0$, and $\hat F_{k_{\hat g}}$ is the {\it early stopping estimator}.
\end{definition}


The global stopping splits all nodes $A \in P_{k_g}$ at a given generation $g$, whereas the semi-global stopping splits the best node in terms of the largest impurity gain across the partition nodes. Both early stopping methods are described in more detail in Algorithms \ref{alg:bfs_semi} and \ref{alg:bfs_global}.

\begin{algorithm}[thp]
\spacingset{1.2}
\small
\caption{Semi-global early stopping algorithm}
\label{alg:bfs_semi}
\begin{algorithmic}[1]
\REQUIRE Training sample $\{(X_i,Y_i)\}_{i=1}^{n}$, with root partition $\mathcal{P}_1=\R^d$
\FOR{$g=1,\ldots,n$}
\IF{stopping condition $R_{g}^2 \leq \kappa$ holds}
        \STATE Set stopping iteration $\hat g_{\text{semi}} \gets g$
        \STATE \textbf{Stop}
\ENDIF
\STATE Initialize next partition $\mathcal{P}_{g+1} \leftarrow \mathcal{P}_{g}$
\STATE Determine the next splitting node as
$A^{(g)} = \underset{A \in P_{g}  }{\operatorname{arg max}}\ \mathbb{IG}(A)$
\STATE Split best node $A^{(g)}$ according to the CART criteria into $A^{(g+1)}_{L}, A^{(g+1)}_{R}$ 
\STATE Refine partition $\mathcal{P}_{g+1} \leftarrow (\mathcal{P}_{g+1} \cup \{ A^{(g+1)}_{L}, A^{(g+1)}_{R} \}  ) \setminus \{ A^{(g)} \} $ and estimator $\hat{F}_{g+1}$
\STATE Update estimator $\hat{F}_{g+1} \gets \Pi_{g+1} Y$, where projection $\Pi_{g+1}$ is based on partition $\mathcal{P}_{g+1}$
\ENDFOR
\RETURN Semi-global early stopped regression tree estimate $\hat{F}_{\hat g_{\text{semi}} }$
\end{algorithmic}
\end{algorithm}

\subsection{Generalized projection flows}
\label{sec:gen_flow}

We generalize sequentially refining projections to a continuously parametrized projection-type family. The continuous parameter allows to balance between over- and underfitting more granularly for regression trees and avoids additional discretization errors in the analysis. This generalization is especially helpful for the global stopping rule where the partition size $k_g$ grows geometrically with the generation $g$. Thus, the estimators' performance from generations $g$ to $g+1$ might drastically change from underfitting to overfitting.  This phenomenon also occurs for early stopping in the conjugate gradient algorithm \citep{hucker2024early} and is referred to as \textit{overshooting}. 


\begin{definition}
A family of selfadjoint, positive semi-definite operators $\Pi_t:L_n^2\to L_n^2$, $t\in[0,n]$, is called a {\em generalized projection flow} or {\em regularization flow} if for all $0\le s\le t\le n$
\[ \Pi_s\Pi_t=\Pi_t\Pi_s,\; \Pi_s\preceq\Pi_t,\; \norm{\Pi_t}\le 1,\; \trace(\Pi_t)=t.\]
Here $\Pi_s\preceq\Pi_t$ means that $\Pi_t-\Pi_s$ is positive semi-definite.
\end{definition}

Note that $\Pi_0$ is necessarily zero and $\Pi_n$ is the identity on $L^2_n$ due to $\trace(\Pi_0)=0$, $\trace(\Pi_n)=\dim(L_n^2)$ and $\norm{\Pi_n}\le 1$. Moreover, $\trace(\Pi_t-\Pi_s)\to 0$ as $t\downarrow s$, together with $\Pi_t-\Pi_s\succeq 0$, implies that $t\mapsto \Pi_t$ is continuous in nuclear and thus in operator norm. To fully reflect the original CART algorithm, we explicitly allow $(\Pi_t)$ to depend on the data. This concept unifies the interpolated regression tree methods of this paper.

\begin{algorithm}[thp]
\spacingset{1.2}
\small
\caption{Global early stopping algorithm}
\label{alg:bfs_global}
\begin{algorithmic}[1]
\REQUIRE Training sample $\{(X_i,Y_i)\}_{i=1}^{n}$, with root partition $\mathcal{P}_1 = \R^d$
\FOR{$g=1,\ldots,G$}
\IF{stopping condition $R_{k_{g}}^2 \leq \kappa$ holds}
        \STATE Set stopping generation $\hat{g}_{\text{glob}} \gets g$
        \STATE \textbf{Stop}
\ENDIF
\STATE Initialize next partition $\mathcal{P}_{k_{g+1}} \gets \mathcal{P}_{k_g}$
\FOR{$j=1,\ldots,k_g$, where $|\mathcal{P}_{k_g}| = k_g$}
\IF{Cardinality $|A^{(g)}_j| = 1$}
        \STATE Do not split $A^{(g)}_j$
\ELSE
\STATE Split node $A^{(g)}_j$ using CART criteria into $A^{(g+1)}_{j_L}$ and $A^{(g+1)}_{j_R}$ where $j_L \neq j_R$
\STATE Refine partition $\mathcal{P}_{k_{g+1}} \leftarrow (\mathcal{P}_{k_{g+1}} \cup \{ A^{(g+1)}_{j_L}, A^{(g+1)}_{j_R} \}  ) \setminus \{ A^{(g)}_j \}$
\STATE Update estimator $\hat{F}_{k_{g+1}} \gets \Pi_{k_{g+1}} Y$, where projection $\Pi_{k_{g+1}}$ is based on partition $\mathcal{P}_{k_{g+1}}$
\ENDIF
\ENDFOR
\ENDFOR
\RETURN Global early stopped regression tree estimate $\hat{F}_{\hat g_{\text{glob}}}$
\end{algorithmic}
\end{algorithm}

\begin{example}\label{Ex1}
For $k_g\in\{0,1,\ldots,n\}$ and generations $g=0,\ldots,G$ let orthogonal projections $\Pi_{k_g}$ on nested $k_g$-dimensional subspaces be given. Assume $k_0=0$ and $k_G=n$.
For $t\in [0,n)$ we define the linearly interpolated projections
\begin{equation*}
\Pi_t=(1-\alpha)\Pi_{k_{g(t)}}+\alpha\Pi_{k_{g(t)+1}}, \text{ where } g(t)=\max\{g\,|\,k_g\le t\},\,\alpha=\tfrac{t-k_{g(t)}}{k_{g(t)+1}-k_{g(t)}}\in[0,1).
\end{equation*}
Then $(\Pi_t)_{t\in[0,n]}$ forms a generalized projection flow.

For semi-global early stopping we have $G=n$ and $k_g=g\in\{0,\ldots,n\}$. Then $\Pi_t=(1-\alpha)\Pi_{\floor{t}}+\alpha\Pi_{\floor{t}+1}$ holds with $\alpha=t-\floor{t}$, $t\in[0,n]$.
For global early stopping, we have $k_0=0$, $k_1=1$ and then every generation at most doubles the dimension of the projection space so that $k_g<k_{g+1}\le 2k_g$ holds for $g\ge 1$. A priori, nothing more is known about the generalized projection flow $(\Pi_t)_{t\in[0,n]}$ obtained in this case. From a computational perspective, the interpolation comes at no additional cost since the required quantities are calculated in either case.

\end{example}

Although we focus on regression tree methods, the early stopping theory developed here applies in much more generality.
Let us discuss some prominent regularization methods in the scope of our framework.

\begin{example}\label{Ex2}
Consider the high-dimensional linear model where $f(x)=\scapro{\beta}{x}$ for an unknown parameter $\beta\in\R^d$. Then the ridge estimator of $F=(f(X_i))_{i=1,\ldots,n}\in\R^n$ is given by $\hat F_\lambda=S_\lambda Y$ with the smoother matrix $S_\lambda=X(X^\top X+\lambda\Id)^{-1}X^\top$ in terms of the design matrix $X=(X_1,\ldots,X_n)^\top\in\R^{n\times d}$ and the ridge parameter $\lambda\ge 0$. Its effective dimension (or effective degrees of freedom) is given by $t(\lambda)=\trace(S_\lambda)$. Setting $\Pi_t=S_{\lambda(t)}$ for the inverse function $\lambda(t)$ of $t(\lambda)$ and $\Pi_0=S_\infty=0$ we obtain a regularization flow. Formally, we need $\rank(X)=n$ to ensure $\trace(S_0)=n$, but otherwise, we can work on the lower-dimensional range of $X$ instead of the full $\R^n$.

This approach also applies to other penalized least squares approaches; see Section 5.4.1 in \citet{hastie2017elements} for a general discussion and an application to smoothing splines. The aim of early stopping in this context is to choose the penalization parameter $\lambda$ sequentially, starting from large values of $\lambda$ and decreasing it gradually. Let us stress at this point that the intrinsic parametrization of $(\Pi_t)$ by $\trace(\Pi_t)$ is only for mathematical convenience.
\end{example}

\begin{example}\label{Ex3}
Gradient descent methods fit the paradigm of early stopping even better in view of their implicit regularization when stopping the iterations early. The standard gradient descent flow (sometimes called Showalter method) for the linear model is obtained by setting $\hat \beta_0=0$, $\frac{d}{ds}\hat\beta_s=-\nabla_\beta(\frac12\norm{Y-X\beta_s}^2)$ for $s\ge 0$. This yields $\hat F_s=X\hat\beta_s=G_s Y$ with $G_s=X(X^\top X)^{-1}(\Id-e^{-sX^\top X})X^\top$. Introducing the effective dimension parametrization $t(s)=\trace(G_s)=\trace(\Id-e^{-sX^\top X})$, we obtain the regularization flow $\Pi_t=G_{s(t)}$ in terms of the inverse parametrization $s(t)$. Similarly, linear interpolation of discrete gradient descent steps gives rise to a regularization flow, compare to the Landweber method in \citet{blanchard2018optimal}.
Often, $(\Pi_t)$ is also data-dependent, resulting in nonlinear methods. 
\end{example}

In the spirit of the data-dependent histogram regression estimators in \citet{nobel1996histogram}, any generalized projection flow defines a flow of estimators to which we can apply the early stopping methodology.

\begin{definition}
For a generalized projection flow $(\Pi_t)$, define the corresponding estimators and global  residuals (or in-sample training errors)
\[ \hat F_t=\Pi_tY,\quad R_t^2=\norm{Y-\hat F_t}_n^2=\norm{(\Id-\Pi_t)Y}_n^2.\]
\end{definition}

\begin{lemma}\label{LemRiskBound}
The squared loss of $\hat F_t$ decomposes in an approximation error, a stochastic error, and a cross term as
\[ \norm{\hat F_t-f}_n^2=\norm{(\Id-\Pi_t)f}_n^2+\norm{\Pi_t\eps}_n^2-2\scapro{\Pi_t\eps}{(\Id-\Pi_t)f}_n.\]
It satisfies the bound
\[ \norm{\hat F_t-f}_n^2\le \Big(\norm{(\Id-\Pi_t)f}_n+\norm{\Pi_t\eps}_n\Big)^2\le 2\norm{(\Id-\Pi_t)f}_n^2+2\norm{\Pi_t\eps}_n^2.\]

The approximation error $\norm{(\Id-\Pi_t)f}_n^2$ decreases continuously from $\norm{f}_n^2$ at $t=0$ to zero at $t=n$. The stochastic error $\norm{\Pi_t\eps}_n^2$ increases continuously from zero at $t=0$ to $\norm{\eps}_n^2$ at $t=n$.
\end{lemma}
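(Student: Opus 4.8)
The plan is to dispatch the three assertions in turn — the algebraic decomposition, the resulting bound, and the monotone/continuous behaviour of the two error terms — each following from the defining properties of a generalized projection flow. For the first, writing $Y=f+\eps$ and using linearity of $\Pi_t$ gives $\hat F_t=\Pi_tf+\Pi_t\eps$, hence $\hat F_t-f=\Pi_t\eps-(\Id-\Pi_t)f$. Expanding $\norm{\Pi_t\eps-(\Id-\Pi_t)f}_n^2$ by bilinearity of $\scapro{\cdot}{\cdot}_n$ produces exactly the stated decomposition into the approximation error $\norm{(\Id-\Pi_t)f}_n^2$, the stochastic error $\norm{\Pi_t\eps}_n^2$, and the cross term $-2\scapro{\Pi_t\eps}{(\Id-\Pi_t)f}_n$. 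Bounding $-2\scapro{\Pi_t\eps}{(\Id-\Pi_t)f}_n\le 2\norm{\Pi_t\eps}_n\norm{(\Id-\Pi_t)f}_n$ by Cauchy--Schwarz turns the decomposition into the square $\bigl(\norm{(\Id-\Pi_t)f}_n+\norm{\Pi_t\eps}_n\bigr)^2$, and the final inequality is the elementary $(a+b)^2\le 2a^2+2b^2$.

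For the behaviour of the two error terms I would first pin down the endpoints. As already observed after the definition of a generalized projection flow, $\trace(\Pi_0)=0$ together with $\Pi_0\succeq0$ forces $\Pi_0=0$, while $\trace(\Id-\Pi_n)=0$ together with $\Id-\Pi_n\succeq0$ forces $\Pi_n=\Id$; substituting $t=0$ and $t=n$ then gives the approximation error the values $\norm{f}_n^2$ and $0$, and the stochastic error the values $0$ and $\norm{\eps}_n^2$. Continuity is inherited from continuity of $t\mapsto\Pi_t$ in operator norm — established in the same remark from $\trace(\Pi_t-\Pi_s)\to0$ as $t\downarrow s$ together with $\Pi_t-\Pi_s\succeq0$: since $\norm{\Pi_t}\le1$ and $\norm{\Id-\Pi_t}\le1$, the $L_n^2$-valued maps $t\mapsto\Pi_t\eps$ and $t\mapsto(\Id-\Pi_t)f$ are Lipschitz with bounded range, and $g\mapsto\norm{g}_n^2$ is Lipschitz on bounded sets, so the compositions are continuous.

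The one step carrying real content is monotonicity. Fix $0\le s\le t\le n$; the defining properties yield the commuting chain $0\preceq\Pi_s\preceq\Pi_t\preceq\Id$ (self-adjointness together with $\norm{\Pi_t}\le1$ giving $\Pi_t\preceq\Id$). Using self-adjointness, $\norm{\Pi_t\eps}_n^2-\norm{\Pi_s\eps}_n^2=\scapro{(\Pi_t^2-\Pi_s^2)\eps}{\eps}_n$ with $\Pi_t^2-\Pi_s^2=(\Pi_t-\Pi_s)(\Pi_t+\Pi_s)$; both factors are positive semi-definite and they commute (because $\Pi_s$ and $\Pi_t$ do), hence their product is positive semi-definite, giving $\norm{\Pi_t\eps}_n^2\ge\norm{\Pi_s\eps}_n^2$. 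Symmetrically, $0\preceq\Id-\Pi_t\preceq\Id-\Pi_s$ commute, and $(\Id-\Pi_s)^2-(\Id-\Pi_t)^2=(\Pi_t-\Pi_s)(2\Id-\Pi_s-\Pi_t)$ is again a product of commuting positive semi-definite operators, so $\norm{(\Id-\Pi_s)f}_n^2\ge\norm{(\Id-\Pi_t)f}_n^2$. Combined with continuity and the endpoint values, this gives the asserted monotone decrease of the approximation error from $\norm{f}_n^2$ to $0$ and the monotone increase of the stochastic error from $0$ to $\norm{\eps}_n^2$.

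I expect the only genuine obstacle to be this monotonicity step: since $\Pi_t$ need not be idempotent, the Pythagoras identity available for orthogonal projections cannot be used directly, and the remedy is precisely to exploit that the whole family commutes, so that products of positive semi-definite members remain positive semi-definite. Everything else is routine manipulation of the empirical inner product.
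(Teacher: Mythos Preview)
Your proof is correct and tracks the paper closely for the decomposition, the Cauchy--Schwarz bound, the endpoint values, and continuity. The one place you diverge is the monotonicity step: the paper introduces the square roots $\Pi_s^{1/2},\Pi_t^{1/2}$ and argues via the sandwich
\[
\norm{\Pi_s\eps}_n^2=\scapro{\Pi_s(\Pi_s^{1/2}\eps)}{\Pi_s^{1/2}\eps}_n\le \scapro{\Pi_t(\Pi_s^{1/2}\eps)}{\Pi_s^{1/2}\eps}_n=\scapro{\Pi_s(\Pi_t^{1/2}\eps)}{\Pi_t^{1/2}\eps}_n\le \norm{\Pi_t\eps}_n^2,
\]
using $\Pi_s\preceq\Pi_t$ twice and commutativity once, whereas you factor $\Pi_t^2-\Pi_s^2=(\Pi_t-\Pi_s)(\Pi_t+\Pi_s)$ and invoke that a product of commuting positive semi-definite operators is positive semi-definite. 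Both routes rest on exactly the same structural ingredient (commutativity of the flow); yours is marginally more elementary in that it sidesteps square roots, while the paper's two-step chain makes the role of the Loewner order $\Pi_s\preceq\Pi_t$ a bit more transparent. Neither buys anything the other does not.
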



\begin{remark}
For $\Pi_t=(1-\alpha)\Pi_{k_{g(t)}}+\alpha \Pi_{k_{g(t)+1}}$  as in Example \ref{Ex1} the cross term satisfies
\[ \scapro{\Pi_t\eps}{(\Id-\Pi_t)f}_n=\alpha(1-\alpha)\scapro{(\Pi_{k_{g(t)+1}}-\Pi_{k_{g(t)}})\eps}{f}_n,\]
which follows by simple algebra, see Equation \ref{EqIntAlgebra} in the Appendix. In this case, it thus represents an interpolation error between projections, which for specific choices of $f$ and $\eps$ might be relatively large (even $\norm{\hat F_t-f}_n^2=0$ is possible for particular choices of $f,\eps\not=0$). For deterministic (or independent) $\Pi_t$, however, its expectation is zero. Its standard deviation $\frac{\alpha(1-\alpha)\sigma}{\sqrt n}\norm{(\Pi_{k_{g(t)+1}}-\Pi_{k_{g(t)}})f}_n$ is usually small, at least in the semi-global setting with $k_{g(t)+1}-k_{g(t)}=1$.
\end{remark}

The following result is fundamental to base the early stopping criterion on the residual.

\begin{lemma}\label{LemRt2}
The residuals $R_t^2$ are continuous and non-increasing in $t$ from $R_0^2=\norm{Y}_n^2$ to $R_n^2=0$.
For  $0\le s\le t\le n$ we have
\[ \norm{\hat F_t-\hat F_s}_n^2\le R_s^2-R_t^2.\]
\end{lemma}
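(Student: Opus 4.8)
The plan is to obtain both assertions from the structural axioms of a generalized projection flow, with the inequality $\norm{\hat F_t-\hat F_s}_n^2\le R_s^2-R_t^2$ actually implying the monotonicity. The continuity part is essentially bookkeeping, while the inequality rests on a Pythagoras-type identity plus the positivity of one cross term.

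For continuity and the endpoint values, I would write $R_t^2=\norm{(\Id-\Pi_t)Y}_n^2$ as a composition of the map $t\mapsto\Pi_t$, already noted in the text to be continuous in operator norm (since $\trace(\Pi_t-\Pi_s)=t-s\to0$ and $\Pi_t-\Pi_s\succeq0$ force $\norm{\Pi_t-\Pi_s}\le\trace(\Pi_t-\Pi_s)\to0$), with the continuous maps $\Pi\mapsto(\Id-\Pi)Y$ and $g\mapsto\norm{g}_n^2$. Hence $t\mapsto R_t^2$ is continuous. The boundary values $R_0^2=\norm{Y}_n^2$ and $R_n^2=0$ follow from $\Pi_0=0$ and $\Pi_n=\Id$.

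For the inequality, fix $0\le s\le t\le n$, put $u=(\Id-\Pi_s)Y$ and $v=(\Id-\Pi_t)Y$, so that $u-v=(\Pi_t-\Pi_s)Y=\hat F_t-\hat F_s$, and expand
\[ R_s^2-R_t^2=\norm{u}_n^2-\norm{v}_n^2=\norm{u-v}_n^2+2\scapro{u-v}{v}_n=\norm{\hat F_t-\hat F_s}_n^2+2\scapro{(\Pi_t-\Pi_s)Y}{(\Id-\Pi_t)Y}_n. \]
It then suffices to show the cross term $\scapro{(\Pi_t-\Pi_s)Y}{(\Id-\Pi_t)Y}_n$ is nonnegative; this at once gives $R_s^2\ge R_t^2$ and the claimed bound. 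Here I would use that $\Pi_t-\Pi_s\succeq0$ and $\Id-\Pi_t\succeq0$ (the latter because $0\preceq\Pi_t$ and $\norm{\Pi_t}\le1$ force all eigenvalues of $\Pi_t$ into $[0,1]$), and that these two operators commute, since $\Pi_s\Pi_t=\Pi_t\Pi_s$ yields $(\Pi_t-\Pi_s)\Pi_t=\Pi_t(\Pi_t-\Pi_s)$. Working in the finite-dimensional space $L_n^2\cong\R^n$, let $C=(\Pi_t-\Pi_s)^{1/2}$ be the symmetric positive semi-definite square root; being a polynomial in $\Pi_t-\Pi_s$, it commutes with $\Pi_t$ and hence with $\Id-\Pi_t$. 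Then
\[ \scapro{(\Pi_t-\Pi_s)Y}{(\Id-\Pi_t)Y}_n=\scapro{(\Id-\Pi_t)C^2Y}{Y}_n=\scapro{C(\Id-\Pi_t)CY}{Y}_n=\scapro{(\Id-\Pi_t)CY}{CY}_n\ge0, \]
using self-adjointness of $\Id-\Pi_t$ and $\Id-\Pi_t\succeq0$, which closes the argument.

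The only mildly delicate point is this last step, i.e. the standard fact that the product of two commuting positive semi-definite operators is positive semi-definite, realized here via the commuting square root; everything else is the Pythagorean algebra and the operator-norm continuity already recorded in the text. I do not expect a genuine obstacle. As a sanity check, in the special case of nested orthogonal projections the cross term vanishes and the bound becomes the exact Pythagorean identity $R_s^2-R_t^2=\norm{\hat F_t-\hat F_s}_n^2$, as it should.
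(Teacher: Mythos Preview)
Your proof is correct and follows essentially the same route as the paper: the same Pythagorean expansion leading to the cross term $\scapro{(\Pi_t-\Pi_s)Y}{(\Id-\Pi_t)Y}_n$, whose nonnegativity is shown via a commuting square root. The only cosmetic difference is that the paper takes the square root of $\Id-\Pi_t$ rather than of $\Pi_t-\Pi_s$, which is immaterial.
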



\begin{remark}
Note that for orthogonal projections $\Pi_s$, $\Pi_t$ with $\Pi_s\preceq\Pi_t$ we have $\Pi_s\Pi_t=\Pi_t\Pi_s=\Pi_s$, which implies $\norm{\hat F_t-\hat F_s}_n^2= R_s^2-R_t^2$ exactly.
\end{remark}

\section{Oracle inequalities}\label{SecOI}

\subsection{Error decomposition under early stopping}
\label{sec:error_decomp}

\begin{definition}\label{def:inter}
We introduce the random {\em  balanced oracle} (depending on $f$ and $\eps$)
\begin{equation*}
 \tau_b=\inf\{t\in[0,n]\,|\, \norm{(\Id-\Pi_t)f}_n\le\norm{\Pi_t\eps}_n\}
\end{equation*}
and for  $\kappa\ge 0$ the data-dependent {\em early stopping rule}
\begin{equation*}
 \tau=\inf\{t\in[0,n]\,|\, R_t^2\le\kappa\}.
\end{equation*}
\end{definition}

By continuity and monotonicity of approximation and stochastic error, we see that $\tau_b$ exists in $[0,n]$ and the errors are balanced in the sense that $\norm{(\Id-\Pi_{\tau_b})f}_n=\norm{\Pi_{\tau_b}\eps}_n$. Equally, $\tau\in[0,n]$ exists and $R_\tau^2=\kappa$ holds, provided $\kappa\le R_0^2=\norm{Y}_n^2$. Note that the terminology 'balanced' \textit{does not} refer to a balanced tree structure, but rather to the concept of balancing the approximation and stochastic error.

We provide the main $\omega$-wise error decomposition for early stopping when compared to the balanced oracle as a benchmark.

\begin{proposition}\label{PropDistanceOracle}
The distance between the early stopping estimator and the balanced oracle estimator is bounded as
\[ \norm{\hat F_\tau-\hat F_{\tau_b}}_n^2\le \underbrace{\abs{\kappa-\norm{\eps}_n^2}}_{\text{early stopping error}}+\underbrace{2\scapro{(\Pi_{\tau_b}-\Pi_{\tau_b}^2)\eps}{\eps}_n}_{\text{interpolation error}}
+\underbrace{2\abs{\scapro{(\Id-\Pi_{\tau_b})^2f}{\eps}_n}}_{\text{cross term}}.\]
\end{proposition}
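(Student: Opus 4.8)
The plan is to route everything through the residual functional $R_t^2$ and then to expand $R_{\tau_b}^2$ using the defining balance property of $\tau_b$. First I would turn the loss into a residual comparison: since $t\mapsto R_t^2$ is continuous and non-increasing (Lemma \ref{LemRt2}), applying that lemma with the smaller of $\tau,\tau_b$ in the role of $s$ gives, regardless of the ordering, $\norm{\hat F_\tau-\hat F_{\tau_b}}_n^2\le\abs{R_\tau^2-R_{\tau_b}^2}$. In the relevant regime $\kappa\le R_0^2=\norm{Y}_n^2$ one has $R_\tau^2=\kappa$ exactly, so a triangle inequality yields
\[ \norm{\hat F_\tau-\hat F_{\tau_b}}_n^2\le\abs{\kappa-\norm{\eps}_n^2}+\abs{\norm{\eps}_n^2-R_{\tau_b}^2},\]
where the first term is already the claimed early stopping error; only the second needs to be split into the interpolation error and the cross term.

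For that, I would write $Y=f+\eps$ and expand $R_{\tau_b}^2=\norm{(\Id-\Pi_{\tau_b})Y}_n^2$ into $\norm{(\Id-\Pi_{\tau_b})f}_n^2$, $\norm{(\Id-\Pi_{\tau_b})\eps}_n^2$, and $2\scapro{(\Id-\Pi_{\tau_b})f}{(\Id-\Pi_{\tau_b})\eps}_n$. Now I invoke two facts: the balance identity at $t=\tau_b$, namely $\norm{(\Id-\Pi_{\tau_b})f}_n^2=\norm{\Pi_{\tau_b}\eps}_n^2=\scapro{\Pi_{\tau_b}^2\eps}{\eps}_n$; and self-adjointness of $\Pi_{\tau_b}$, which rewrites $\norm{(\Id-\Pi_{\tau_b})\eps}_n^2=\scapro{(\Id-\Pi_{\tau_b})^2\eps}{\eps}_n$ and the cross term as $\scapro{(\Id-\Pi_{\tau_b})^2f}{\eps}_n$. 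Collecting the $\eps$-quadratic pieces, the $\norm{\eps}_n^2$ contributions cancel and I am left with the exact identity $\norm{\eps}_n^2-R_{\tau_b}^2=2\scapro{(\Pi_{\tau_b}-\Pi_{\tau_b}^2)\eps}{\eps}_n-2\scapro{(\Id-\Pi_{\tau_b})^2f}{\eps}_n$. Since $\Pi_{\tau_b}$ is self-adjoint with $\norm{\Pi_{\tau_b}}\le1$, its spectrum lies in $[0,1]$, so $\Pi_{\tau_b}-\Pi_{\tau_b}^2=\Pi_{\tau_b}(\Id-\Pi_{\tau_b})\succeq0$ and the interpolation term is nonnegative; hence $\abs{\norm{\eps}_n^2-R_{\tau_b}^2}\le2\scapro{(\Pi_{\tau_b}-\Pi_{\tau_b}^2)\eps}{\eps}_n+2\abs{\scapro{(\Id-\Pi_{\tau_b})^2f}{\eps}_n}$. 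Substituting into the display above gives exactly the asserted bound.

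I do not anticipate a real obstacle: the whole thing is a short chain of identities plus two triangle inequalities. The points needing care are purely bookkeeping ones: resisting the temptation to use idempotency of $\Pi_{\tau_b}$ anywhere (the term $\Pi_{\tau_b}-\Pi_{\tau_b}^2$, which would vanish for a genuine projection, is precisely the ``interpolation error'' created by the linear interpolation of Example \ref{Ex1}); and treating the degenerate case $\kappa>\norm{Y}_n^2$ separately, where $\tau=0$, so $\tau\le\tau_b$ automatically and $R_\tau^2=\norm{Y}_n^2\le\kappa$ makes Step 1 go through with $\kappa$ in place of $R_\tau^2$ a fortiori.
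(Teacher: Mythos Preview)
Your proposal is correct and follows essentially the same route as the paper's proof: both bound $\norm{\hat F_\tau-\hat F_{\tau_b}}_n^2$ by $\abs{R_\tau^2-R_{\tau_b}^2}$ via Lemma~\ref{LemRt2}, expand $R_{\tau_b}^2$ using $Y=f+\eps$ and the balance identity $\norm{(\Id-\Pi_{\tau_b})f}_n^2=\norm{\Pi_{\tau_b}\eps}_n^2$, and finish with the triangle inequality, treating the case $\kappa>\norm{Y}_n^2$ separately. Your explicit remark that $\Pi_{\tau_b}-\Pi_{\tau_b}^2\succeq 0$ (hence no absolute value is needed on the interpolation term) is a detail the paper leaves implicit.
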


\begin{remark}\label{RemErrDec}
Ideally, the threshold $\kappa$ in early stopping should equal the squared empirical norm of the noise $\eps$. This is not accessible for the statistician and yields the main general error for the estimator obtained by early stopping. If we are to choose a deterministic threshold value $\kappa$ in early stopping, then the natural value is $\kappa=\E[\norm{\eps}_n^2]=\sigma^2$. More generally, we have
\[ \E[\abs{\kappa-\norm{\eps}_n^2}]\le \E[\abs{\kappa-\sigma^2}]+\E[\abs{\sigma^2-\norm{\eps}_n^2}^2]^{1/2}\le \E[\abs{\kappa-\sigma^2}]+n^{-1/2}\Var(\eps_1^2)^{1/2}
\]
and for $\kappa=\sigma^2$ the early stopping error will still be of order $n^{-1/2}$, which represents an intrinsic information loss due to early stopping, compare the lower bound in \citet{blanchard2018early}.

The interpolation error is due to the fact that $\Pi_{\tau_b}$ is usually not a projection, in which case it vanishes. It must be analyzed for the concrete flow $(\Pi_t)$ under consideration, which is done for the semi-global and global regression tree algorithms in Lemma \ref{LemIntPol} below. The cross-term is challenging to control because both arguments of the scalar product are random with a complex dependency structure. Section \ref{SecCT} below is devoted to controlling this term.
\end{remark}

\subsection{General oracle inequalities}
\label{sec:oracle_ineq}

We directly obtain a first oracle-type inequality in expectation under subgaussian noise. While the results could also be achieved with high probability, doing so would lead to more involved formulations.

\begin{theorem}\label{ThmOI1}
If the noise vector $\eps$ is $\bar\sigma$-subgaussian in the sense that $\E[\exp(\scapro{\lambda}{\eps}_n)]\le \exp(\norm{\lambda}_n^2\bar\sigma^2/(2n))$ for all $\lambda\in\R^n$, then the interpolation error satisfies
\begin{align}
2\E\big[\scapro{(\Pi_{\tau_b}-\Pi_{\tau_b}^2)\eps}{\eps}_n\big]& \le 16\bar\sigma^2\log(n)n^{-1}\E[\trace(\Pi_{\tau_b}-\Pi_{\tau_b}^2)].\label{EqOI1a}
\end{align}
For the risk we
obtain  the early stopping oracle-type inequality
\begin{align}
\E[\norm{\hat F_\tau-f}_n^2]
&\le 9\E\Big[\inf_{t\in[0,n]}\Big(\norm{(\Id-\Pi_{t})f}_n^2+\norm{\Pi_{t}\eps}_n^2\Big)\Big]+2\E[\abs{\kappa-\norm{\eps}_n^2}]
\nonumber\\
&\quad + 32\bar\sigma^2\frac{\log(2n)}{n}\E[\trace(\Pi_{\tau_b}-\Pi_{\tau_b}^2)] +4\E\big[\scapro{\tfrac{(\Id-\Pi_{\tau_b})^2f}{\norm{(\Id-\Pi_{\tau_b})f}_n}}{\eps}_n^2\big].\label{EqOI1b}
\end{align}
Here and in the sequel, we apply the convention
\begin{equation}\label{EqOI1Conv}
\tfrac{(\Id-\Pi_{\tau_b})^2f}{\norm{(\Id-\Pi_{\tau_b})f}_n}:=0\text{ in case }(\Id-\Pi_{\tau_b})f=0.
\end{equation}
\end{theorem}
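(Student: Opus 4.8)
The plan is to treat the two displayed inequalities in turn. Inequality \eqref{EqOI1a} — the deviation bound for the interpolation error — contains essentially all the work, whereas \eqref{EqOI1b} follows by combining Proposition~\ref{PropDistanceOracle}, Lemma~\ref{LemRiskBound} and \eqref{EqOI1a} with elementary inequalities. I expect the uniform-in-$t$ control behind \eqref{EqOI1a}, with the \emph{sharp} dependence on $\trace(\Pi_t-\Pi_t^2)$ (a crude bound by $\scapro{\Pi_t\eps}{\eps}_n$ is far too lossy in the semi-global regime) and the handling of the noise-dependence of the flow for CART, to be the main obstacle.

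\emph{Step 1: the interpolation error.} Write $M_t:=\Pi_t-\Pi_t^2=\Pi_t(\Id-\Pi_t)$. Since $0\preceq\Pi_t\preceq\Id$ and $\Pi_t$, $\Id-\Pi_t$ commute, $M_t$ is selfadjoint, positive semi-definite, commutes with the whole flow, satisfies $\norm{M_t}\le\tfrac14$ (eigenvalues $\mu(1-\mu)$, $\mu\in[0,1]$) and $\trace(M_t^2)\le\tfrac14\trace(M_t)$. The core claim is that, on a favourable event of probability at least $1-n^{-1}$,
\[
\scapro{M_t\eps}{\eps}_n\le c\,\bar\sigma^2\,\frac{\log n}{n}\,\trace(M_t)\qquad\text{for all }t\in[0,n]
\]
for a numerical constant $c$. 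I would derive it from a Hanson--Wright / $\chi^2$-type deviation bound for quadratic forms of $\bar\sigma$-subgaussian vectors, which for a \emph{fixed} operator $M$ with $\norm{M}\le\tfrac14$ gives $\scapro{M\eps}{\eps}_n\le\tfrac{\bar\sigma^2}{n}\bigl(\trace(M)+C\sqrt{\trace(M)\,u}+Cu\bigr)\le\tfrac{C'\bar\sigma^2}{n}\bigl(\trace(M)+u\bigr)$ with probability $\ge 1-e^{-u}$, combined with a union bound over the finitely many operators that actually occur along the flow. For the interpolated flows of Example~\ref{Ex1} it suffices to control $\scapro{(\Pi_{k_{g+1}}-\Pi_{k_g})\eps}{\eps}_n$ for the at most $n$ generations $g$, because on generation $g$ one has $M_t=\alpha(1-\alpha)(\Pi_{k_{g+1}}-\Pi_{k_g})$ and $\trace(M_t)=\alpha(1-\alpha)(k_{g+1}-k_g)$, so the ratio $\scapro{M_t\eps}{\eps}_n/\trace(M_t)$ is constant on each generation; taking $u\asymp\log n$ and using $k_{g+1}-k_g\ge 1$ to absorb the $u$-term into the trace term yields the claim. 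For data-dependent $\Pi_t$, as in CART, the union bound additionally runs over the admissible split configurations, which is where the logarithmic factor is really spent. On the complementary event I would use the deterministic bound $\scapro{M_{\tau_b}\eps}{\eps}_n\le\tfrac14\norm{\eps}_n^2$ and estimate its contribution by Cauchy--Schwarz, using $\E[\norm{\eps}_n^4]=O(\bar\sigma^4)$; choosing the deviation level $\asymp\log n$ makes this a lower-order term. Taking expectations then gives \eqref{EqOI1a}.

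\emph{Step 2: the risk bound.} Begin with the $\omega$-wise inequality $\norm{\hat F_\tau-f}_n^2\le 2\norm{\hat F_\tau-\hat F_{\tau_b}}_n^2+2\norm{\hat F_{\tau_b}-f}_n^2$. For the second term, Lemma~\ref{LemRiskBound} together with the balance $\norm{(\Id-\Pi_{\tau_b})f}_n=\norm{\Pi_{\tau_b}\eps}_n$ gives $\norm{\hat F_{\tau_b}-f}_n^2\le\bigl(\norm{(\Id-\Pi_{\tau_b})f}_n+\norm{\Pi_{\tau_b}\eps}_n\bigr)^2=2\bigl(\norm{(\Id-\Pi_{\tau_b})f}_n^2+\norm{\Pi_{\tau_b}\eps}_n^2\bigr)$, and the bracket is at most $2\inf_{t\in[0,n]}\bigl(\norm{(\Id-\Pi_t)f}_n^2+\norm{\Pi_t\eps}_n^2\bigr)$: by monotonicity of the approximation and stochastic errors, for every $t$ either its approximation error (if $t\le\tau_b$) or its stochastic error (if $t\ge\tau_b$) is already at least half of the balanced value. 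Hence $2\norm{\hat F_{\tau_b}-f}_n^2\le 8\inf_t(\cdots)$. For the first term, Proposition~\ref{PropDistanceOracle} gives $2\norm{\hat F_\tau-\hat F_{\tau_b}}_n^2\le 2\abs{\kappa-\norm{\eps}_n^2}+4\scapro{M_{\tau_b}\eps}{\eps}_n+4\abs{\scapro{(\Id-\Pi_{\tau_b})^2f}{\eps}_n}$; I would rewrite the last summand as $4\norm{(\Id-\Pi_{\tau_b})f}_n\,\abs{\scapro{\tfrac{(\Id-\Pi_{\tau_b})^2f}{\norm{(\Id-\Pi_{\tau_b})f}_n}}{\eps}_n}$ — with convention \eqref{EqOI1Conv}, under which this term and the factor $\norm{(\Id-\Pi_{\tau_b})f}_n^2$ vanish simultaneously — and apply $4ab\le a^2+4b^2$, which produces one additional copy of $\norm{(\Id-\Pi_{\tau_b})f}_n^2\le\inf_t(\cdots)$ plus $4\scapro{\tfrac{(\Id-\Pi_{\tau_b})^2f}{\norm{(\Id-\Pi_{\tau_b})f}_n}}{\eps}_n^2$. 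Summing the $8+1$ copies of $\inf_t(\cdots)$, taking expectations, and inserting \eqref{EqOI1a} to convert $4\E[\scapro{M_{\tau_b}\eps}{\eps}_n]$ into $32\bar\sigma^2\tfrac{\log(2n)}{n}\E[\trace(\Pi_{\tau_b}-\Pi_{\tau_b}^2)]$ yields \eqref{EqOI1b}.
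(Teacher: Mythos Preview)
Your Step~2 is correct and essentially matches the paper (triangle inequality, Proposition~\ref{PropDistanceOracle}, the balanced-oracle property, and $4ab\le a^2+4b^2$, which is the paper's choice $\delta=\tfrac12$).

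Step~1, however, misses the paper's idea and does not deliver \eqref{EqOI1a} as stated. Two concrete problems. (i) Your ``good event plus crude remainder'' scheme produces an additive error of order $\bar\sigma^2\sqrt{\PP(\text{bad})}$ that does not carry the factor $\E[\trace(M_{\tau_b})]$; since \eqref{EqOI1a} is a \emph{multiplicative} bound and $\E[\trace(M_{\tau_b})]$ can be arbitrarily small (it is $\le\tfrac14$ for the semi-global flow, and zero whenever $\Pi_{\tau_b}$ happens to be a projection), no fixed deviation level rescues the stated inequality. (ii) For data-dependent flows your union bound over split configurations does not cost only $\log n$: after $k$ splits there are up to $(dn)^k$ configurations, forcing $u\asymp k\log(dn)$, which is the cross-term complexity appearing in Proposition~\ref{PropCTGeneral} and not the $\log n$ of \eqref{EqOI1a}. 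Finally, your whole argument is tailored to the interpolated flows of Example~\ref{Ex1}, whereas the theorem is stated for arbitrary generalized projection flows.

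The paper's route avoids all three issues by exploiting commutativity of the flow. One orthogonal $O$ diagonalizes every $\Pi_t$ simultaneously, so with $\tilde\eps=O\eps$ and $D_t=O\Pi_tO^\top$ diagonal one has the $\omega$-wise product bound
\[
\scapro{(\Pi_{\tau_b}-\Pi_{\tau_b}^2)\eps}{\eps}_n=\tfrac1n\textstyle\sum_i(D_{\tau_b}-D_{\tau_b}^2)_{ii}\,\tilde\eps_i^2\le \trace(\Pi_{\tau_b}-\Pi_{\tau_b}^2)\cdot\tfrac1n\max_i\tilde\eps_i^2.
\]
The remaining task is to bound $\E[UV]$ (with $U=\trace(M_{\tau_b})$ and $V=\bar\sigma^{-2}\max_i\tilde\eps_i^2$) by $C\log(n)\,\E[U]$ despite the dependence between $U$ and $V$. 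The paper achieves this by a Legendre-transform decoupling: from $uv\le e^v+u\log u-u$ with $u=U/\E[U]$, $v=\alpha V-\log\E[e^{\alpha V}]$ one gets $\alpha\E[UV]\le\log\E[e^{\alpha V}]\,\E[U]+\E[U\log U]-\log(\E[U])\,\E[U]$; subgaussianity of $\tilde\eps$ yields $\E[e^{V/8}]\le 2n$, and the deterministic bound $U\log U\le U\log n$ finishes. No Hanson--Wright inequality and no union bound over operators or configurations enter, and the multiplicative structure is built in from the start.
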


\begin{remark}
The interpolation error bound \eqref{EqOI1a} depends on the underlying generalized projection flow. For semi-global early stopping, it will be negligible, but for global early stopping, it matters.
The numerical constants in \eqref{EqOI1b} are explicit but could be optimized, depending on the size of the different terms. This also applies to all bounds obtained in the sequel.
\end{remark}


Next, we provide bounds for the interpolation error.

\begin{lemma}\label{LemIntPol}
Consider the generalized projection flow $(\Pi_t)$ from Example \ref{Ex1}  for semi-global and global early stopping. Then the interpolation error satisfies
\begin{align*}
32\bar\sigma^2\log(2n)n^{-1}\E[\trace(\Pi_{\tau_b}-\Pi_{\tau_b}^2)]\le \begin{cases} 8 \bar\sigma^2\log(2n)n^{-1},&\text{ semi-global,}\\  8\bar\sigma^2\log(2n)n^{-1}\E[\norm{\Pi_{\tau_b}}_{HS}^2\vee 1],& \text{ global.}\end{cases}
\end{align*}
\end{lemma}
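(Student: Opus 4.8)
The plan is to reduce the bound to the elementary pointwise identity $\Pi_t-\Pi_t^2=\alpha(1-\alpha)\bigl(\Pi_{k_{g(t)+1}}-\Pi_{k_{g(t)}}\bigr)$ for the flow of Example \ref{Ex1}, and then take traces. First I would fix $t\in[0,n)$, abbreviate $g=g(t)$, $P=\Pi_{k_g}$, $Q=\Pi_{k_{g+1}}$ and $\alpha=\tfrac{t-k_g}{k_{g+1}-k_g}\in[0,1)$ as there. Since $V_{k_g}\subset V_{k_{g+1}}$, the maps $P,Q$ are orthogonal projections with $PQ=QP=P$, so expanding $\Pi_t=(1-\alpha)P+\alpha Q$ gives $\Pi_t^2=(1-\alpha)^2P+2\alpha(1-\alpha)P+\alpha^2Q=(1-\alpha^2)P+\alpha^2Q$, whence $\Pi_t-\Pi_t^2=\alpha(1-\alpha)(Q-P)\succeq 0$. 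Taking traces, using $\trace P=k_g$, $\trace Q=k_{g+1}$ and $\alpha(1-\alpha)\le\tfrac14$, yields
\[ \trace(\Pi_t-\Pi_t^2)=\alpha(1-\alpha)(k_{g+1}-k_g)\le\tfrac14(k_{g+1}-k_g), \]
and the endpoint $t=n$ is trivial since there $\Pi_n=\Id$ and both sides vanish.

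For the semi-global flow, $k_g=g$, so $k_{g+1}-k_g=1$ for every $t$; in particular $\trace(\Pi_{\tau_b}-\Pi_{\tau_b}^2)\le\tfrac14$ almost surely, and multiplying by $32\bar\sigma^2\log(2n)n^{-1}$ and taking expectations gives the claimed bound $8\bar\sigma^2\log(2n)n^{-1}$.

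For the global flow I would additionally invoke the doubling property $k_1=1$ and $k_{g+1}\le 2k_g$ for $g\ge 1$, which gives $k_{g+1}-k_g\le k_g\vee 1$ for every $g\ge 0$. On the other hand, selfadjointness of $\Pi_t$ gives $\norm{\Pi_t}_{HS}^2=\trace(\Pi_t^2)=(1-\alpha^2)k_g+\alpha^2k_{g+1}\ge k_g$, hence $k_g\vee 1\le\norm{\Pi_t}_{HS}^2\vee 1$. Combining these with the trace bound above,
\[ \trace(\Pi_t-\Pi_t^2)\le\tfrac14(k_g\vee 1)\le\tfrac14\bigl(\norm{\Pi_t}_{HS}^2\vee 1\bigr); \]
evaluating at $t=\tau_b$, taking expectations and multiplying by $32\bar\sigma^2\log(2n)n^{-1}$ gives the global bound.

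I do not anticipate a genuine obstacle: the identity for $\Pi_t-\Pi_t^2$ is immediate from the nesting, and the only points requiring a little care are (i) the degenerate cases $g(\tau_b)=0$ and $\tau_b=n$ in the global part, where $\norm{\Pi_{\tau_b}}_{HS}^2$ can be smaller than $k_{g(\tau_b)+1}-k_{g(\tau_b)}=1$ — this is exactly what the ``$\vee 1$'' in the statement absorbs — and (ii) the observation that the Hilbert--Schmidt identity $\norm{\Pi_t}_{HS}^2=\trace(\Pi_t^2)$ uses selfadjointness of $\Pi_t$.
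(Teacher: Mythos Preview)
Your proof is correct and follows essentially the same route as the paper: both derive the identity $\Pi_t-\Pi_t^2=\alpha(1-\alpha)(\Pi_{k_{g+1}}-\Pi_{k_g})$, take traces, use $\alpha(1-\alpha)\le\tfrac14$ together with $k_{g+1}-k_g=1$ (semi-global) or $k_{g+1}-k_g\le k_g\vee1$ (global), and then bound $k_g$ by $\trace(\Pi_t^2)=\norm{\Pi_t}_{HS}^2$. You are, if anything, slightly more careful than the paper, spelling out the expansion $\trace(\Pi_t^2)=(1-\alpha^2)k_g+\alpha^2k_{g+1}\ge k_g$ and handling the degenerate endpoints explicitly.
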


\begin{remark}\label{RemHS}
Naturally, the global bound is larger than the semi-global.
The global bound is formulated in terms of the Hilbert-Schmidt norm $\norm{\Pi_{\tau_b}}_{HS}$ because for independent $\Pi_t$, the stochastic error satisfies
\[ \E[\norm{\Pi_{t}\eps}_n^2]=\sigma^2 n^{-1}\E[\norm{\Pi_t}_{HS}^2]\le  \frac{\sigma^2 t}{n},\]
using $\norm{\Pi_t}_{HS}^2\le \trace(\Pi_t)=t$. Standard entropy arguments, compare Remark 2 of \citet{klusowski2023large} and the proof of their Theorem 4.3, yield the following order for the dependent CART case and random $\tau_b$
\begin{equation}\label{EqKlTi} \E[\norm{\Pi_{\tau_b}\eps}_n^2]\lesssim \frac{\sigma^2 \E[\tau_b]\log^2(n)\log(dn)}{n},
\end{equation}
where the factor $\log^2(n)$ takes care of the possibly unbounded support of $\eps_i$.
This upper bound for the stochastic error at the balanced oracle has a larger order than the bounds of Lemma \ref{LemIntPol}, suggesting that the interpolation error is asymptotically negligible.
\end{remark}


\subsection{Bounding the cross term}\label{SecCT}
The cross-term is much harder to control. First, we derive a bound in the advantageous setting where the generalized projection flow is independent of  $(X_i,Y_i)_{i=1,\ldots,n}$, e.g. obtained from an independent sample.
This setting is similar to the centered forest initially introduced in a technical report of \citet{breiman2004consistency}, which simplifies the random forest. Further theoretical results under this assumption are developed by \citet{biau2012analysis} and \citet{klusowski2021sharp}.  An analysis of early stopping for deterministic (or independent) projections is conducted in \citet{blanchard2018early} but with respect to a deterministic oracle, which simplifies the treatment of the cross term. Here, the independent case serves mainly as a benchmark result.

\begin{proposition}\label{PropCTIndep}
If the generalized projection flow $(\Pi_t)$ is independent of the observations $(X_i,Y_i)_{i=1,\ldots,n}$ and  $\eps_i\sim N(0,\sigma^2)$ conditional on  $X_i$, then
\[ 4\E\big[\scapro{\tfrac{(\Id-\Pi_{\tau_b})^2f}{\norm{(\Id-\Pi_{\tau_b})f}_n}}{\eps}_n^2\big]\le  16 \frac{\sigma^2\log (\sqrt2n)}{n},
\]
where convention \eqref{EqOI1Conv} is in force.
\end{proposition}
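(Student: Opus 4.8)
The plan is to condition on the design $X_1,\dots,X_n$ (equivalently, on the flow $(\Pi_t)$, which is independent of the observations). Under this conditioning $\Pi_t$ becomes deterministic, and hence so does the balanced oracle $\tau_b$ as a function of $\eps$; however, $\tau_b$ still depends on $\eps$, so the vector $u := (\Id-\Pi_{\tau_b})^2 f / \norm{(\Id-\Pi_{\tau_b})f}_n$ is a random unit-type vector that is $\eps$-measurable, and we cannot simply pull it out of the inner product. The key observation is that $\tau_b$ ranges over a set of at most $n$ relevant values: by Example \ref{Ex1}, the approximation and stochastic error curves are piecewise linear in $t$ with breakpoints at the integers $k_g$ (or at $0,1,\dots,n$ in the semi-global case), so the balanced crossing point $\tau_b$ lies in one of finitely many interpolation cells, and there are at most $n$ candidate operators $\Pi_{\tau_b}$ — actually the relevant unit vectors form a one-parameter family within each cell, but one can discretize or argue that within a cell the bound is uniform. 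I would therefore reduce to a supremum over a finite (or suitably discretized) index set $\mathcal{T}$ with $|\mathcal{T}| \lesssim n$, writing
\[
\scapro{\tfrac{(\Id-\Pi_{\tau_b})^2f}{\norm{(\Id-\Pi_{\tau_b})f}_n}}{\eps}_n^2 \le \max_{t\in\mathcal T}\scapro{u_t}{\eps}_n^2,
\]
where $u_t := (\Id-\Pi_t)^2 f/\norm{(\Id-\Pi_t)f}_n$ is now deterministic given the design.

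Next I would bound each term $\scapro{u_t}{\eps}_n$. Since $\eps \sim N(0,\sigma^2\Id)$ conditionally on $X$ and $u_t$ is deterministic, $\sqrt n\,\scapro{u_t}{\eps}_n = n^{-1/2}\sum_i u_t(X_i)\eps_i$ is centered Gaussian with variance $\sigma^2 \norm{u_t}_n^2/n$. Here the crucial algebraic point is that $\norm{u_t}_n \le 1$: because $\Id - \Pi_t$ is a contraction ($\norm{\Pi_t}\le 1$ and $\Pi_t\succeq 0$ give $0 \preceq \Id-\Pi_t \preceq \Id$), we have $\norm{(\Id-\Pi_t)^2 f}_n \le \norm{(\Id-\Pi_t)f}_n$, so $\norm{u_t}_n\le 1$. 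Hence each $\scapro{u_t}{\eps}_n$ is $N(0, \le \sigma^2/n)$. A standard Gaussian maximal inequality then gives, with $\xi_t := \scapro{u_t}{\eps}_n$,
\[
\E\big[\max_{t\in\mathcal T}\xi_t^2\big] \le \frac{\sigma^2}{n}\Big(2\log|\mathcal T| + C\Big) \le \frac{\sigma^2}{n}\cdot 4\log(\sqrt 2\, n),
\]
by tuning constants; multiplying by $4$ yields the claimed $16\sigma^2\log(\sqrt2 n)/n$. (To get the maximum of squares rather than the maximum, I would use that for centered subgaussians $\E[\max_j \xi_j^2] \le \frac{\sigma^2}{n}(2\log(2|\mathcal T|) )$ via the $\psi_2$ / sub-exponential bound on $\xi_j^2$, or equivalently bound $\E[\max |\xi_j|]^2$ and the fluctuation around it; the precise constant bookkeeping is routine and is what fixes the $\sqrt2 n$ inside the log.) Finally I would take the outer expectation over the design, noting that the bound is uniform in $X$ and therefore survives unchanged, and invoke convention \eqref{EqOI1Conv} to handle the event $(\Id-\Pi_{\tau_b})f=0$, on which the term is zero by definition.

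The main obstacle — and the step deserving the most care — is the reduction to a finite index set: one must argue that although $\tau_b$ is a genuinely $\eps$-dependent real number in $[0,n]$, the quantity $\scapro{u_{\tau_b}}{\eps}_n^2$ is dominated by a maximum over only $O(n)$ deterministic directions. In the semi-global case this is transparent since $\Pi_t$ interpolates linearly between $\Pi_{\floor t}$ and $\Pi_{\floor t +1}$, and one checks that within the cell $[m,m+1]$ the vector $(\Id-\Pi_t)^2 f$ traces a quadratic curve in $\R^n$; a convexity/endpoint argument (or a crude discretization into, say, $n$ subpoints per cell, absorbing the extra $\log$ into the constant) then bounds $\scapro{u_{\tau_b}}{\eps}_n^2$ by the max over $\lesssim n$ (or $\lesssim n^2$, still fine up to constants in the log) fixed directions. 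For the global case the same applies with the cells $[k_g, k_{g+1}]$. Once this discretization is in place, everything else is a one-line Gaussian concentration estimate, so the proposition is really a statement about the low "complexity" of the balanced-oracle interpolation family.
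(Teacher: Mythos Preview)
Your approach is genuinely different from the paper's, and for general projection flows it has a gap.

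The paper's key idea, which you do not use, is simultaneous diagonalisation. Since all $\Pi_t$ commute, there is a single orthogonal matrix $O$ with $D_t:=O\Pi_tO^\top$ diagonal for every $t$. Setting $\tilde\eps:=O\eps\sim N(0,\sigma^2\Id)$, one observes that $\tau_b$ depends on $\eps$ only through $\norm{\Pi_t\eps}_n^2=n^{-1}\sum_i (D_t)_{ii}^2\tilde\eps_i^2$, hence only through the magnitudes $(\tilde\eps_i^2)_{i=1}^n$. Conditioning on $(X_i,\tilde\eps_i^2)_i$ therefore makes $(\Id-D_{\tau_b})^2Of$ deterministic while the signs of the $\tilde\eps_i$ remain independent symmetric; the off-diagonal terms in the expansion of $\scapro{(\Id-D_{\tau_b})^2Of}{\tilde\eps}_n^2$ vanish, leaving a bound by $n^{-1}\norm{(\Id-\Pi_{\tau_b})^2f}_n^2\max_i\tilde\eps_i^2$. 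Dividing by $\norm{(\Id-\Pi_{\tau_b})f}_n^2\ge\norm{(\Id-\Pi_{\tau_b})^2f}_n^2$ and invoking $\E[\max_i\tilde\eps_i^2]\le 4\sigma^2\log(\sqrt2 n)$ finishes the proof. This uses only commutativity of the flow and Gaussianity of $\eps$; no interpolation structure is needed.

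Your route via a finite index set works only for the interpolated flows of Example~\ref{Ex1}: there the identity $(\Id-\Pi_t)^2=(1-\alpha)^2(\Id-\Pi_{k_g})+(1-(1-\alpha)^2)(\Id-\Pi_{k_{g+1}})$ together with Cauchy--Schwarz (or Jensen) does reduce $\scapro{u_{\tau_b}}{\eps}_n^2$ to a maximum over the $G+1\le n+1$ endpoint directions --- this is exactly the manoeuvre the paper performs in the \emph{dependent} case, Proposition~\ref{PropCTGeneral}. But Proposition~\ref{PropCTIndep} is stated for an arbitrary generalised projection flow (think of the ridge or gradient-flow families in Examples~\ref{Ex2}--\ref{Ex3}), where there are no cells and hence no endpoint reduction. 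Your fallback ``crude discretisation into $n$ subpoints per cell'' does not rescue this: the normalised vector $u_t=(\Id-\Pi_t)^2f/\norm{(\Id-\Pi_t)f}_n$ can swing rapidly on the unit sphere whenever the denominator is small, so the mesh-error control is not routine and no uniform $\log n$ covering bound is available. As written, your argument therefore proves the claim only for the special flows of Example~\ref{Ex1}, not in the stated generality.
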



\begin{remark}
As the proof reveals, we can generalize the preceding result to subgaussian noise vectors $\eps$ for which $\E[(O\eps)_k(O\eps)_l\,|\,(X_i,(O\eps)_i^2)_{i=1,\ldots,n}]=0$ holds for $k\not=l$ and the orthogonal transformation $O$ diagonalizing the flow $(\Pi_t)$. If we require this property for all orthogonal transformations, however, it is not clear whether any interesting class of distributions beyond the Gaussian qualifies.
\end{remark}

Now, we allow the projections $(\Pi_t)$ to be data-driven and to depend arbitrarily on the data $(X_i,Y_i)_{i=1,\ldots,n}$. This aligns with the standard CART splitting criterion, analyzed theoretically in \citet{scornet2015consistency}, \citet{chi2022asymptotic} and \citet{klusowski2023large}.

\begin{proposition}\label{PropCTGeneral}
Suppose that  $\eps$ is $\bar\sigma$-subgaussian and that
 the data-driven generalized projection flow $(\Pi_t)$ is generated by semi-global or global early stopping. Then the cross term satisfies (noting convention \eqref{EqOI1Conv})
\[ 4\E\big[\scapro{\tfrac{(\Id-\Pi_{\tau_b})^2f}{\norm{(\Id-\Pi_{\tau_b})f}_n}}{\eps}_n^2\big]\le  16\frac{\bar\sigma^2 \E[\tau_b+1]\log(dn)} {n}.
\]
\end{proposition}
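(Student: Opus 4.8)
The plan is to control the quantity $4\E[\langle (\Id-\Pi_{\tau_b})^2 f/\norm{(\Id-\Pi_{\tau_b})f}_n,\,\eps\rangle_n^2]$ by a uniform (over all data-driven flows produced by the algorithm) chaining/entropy bound, exploiting the fact that the random unit vector $v := (\Id-\Pi_{\tau_b})^2 f/\norm{(\Id-\Pi_{\tau_b})f}_n$ lies — up to the randomness in $\tau_b$ and in the partitions — in a low-complexity family of functions indexed by the partitions reachable by $k_g$ splits. First I would fix the event structure: condition on $\tau_b$ (or on $\lceil\tau_b\rceil$, using monotonicity to pass between the continuous flow and the discrete generations) so that $\Pi_{\tau_b}$ is, up to the interpolation weight $\alpha$, a projection onto $V_{P_k}$ for a partition $P_k$ with $k\le \tau_b+1$ terminal nodes obtained by axis-aligned CART splits. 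Since $(\Id-\Pi_{k})^2 = \Id-\Pi_k$ for the genuine projection and the interpolated operator only contributes lower-order corrections, $v$ is (a normalized version of) $(\Id-\Pi_k)f$, a vector that is piecewise constant on a CART partition with at most $k$ cells. The key combinatorial input is that the number of partitions of the $n$ design points into $k$ axis-aligned rectangular cells reachable by recursive binary splits is at most $(dn)^{Ck}$ for an absolute constant $C$ — this is exactly the Remark 2 / Theorem 4.3 counting argument in \citet{klusowski2023large} already invoked in Remark \ref{RemHS}.

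The main steps, in order: (i) reduce to a supremum over a finite (data-dependent cardinality) set of unit vectors $\{v_P : P \text{ a reachable partition with } \le k \text{ cells}\}$, so that $\langle v, \eps\rangle_n^2 \le \max_{P} \langle v_P,\eps\rangle_n^2$; (ii) for each fixed $v_P$, use the $\bar\sigma$-subgaussianity of $\eps$ to get $\E[\exp(\lambda \langle v_P,\eps\rangle_n)] \le \exp(\lambda^2\bar\sigma^2/(2n))$ since $\norm{v_P}_n = 1$, hence a standard maximal inequality over $N$ subgaussian variables yields $\E[\max_P \langle v_P,\eps\rangle_n^2] \lesssim \bar\sigma^2 n^{-1}\log N$; (iii) insert $N \le (dn)^{Ck}$ so that $\log N \lesssim k\log(dn)$; (iv) take expectation over $k$, or rather over $\tau_b$, using $k\le\tau_b+1$, to land at the claimed $16\bar\sigma^2\E[\tau_b+1]\log(dn)/n$ after tracking constants. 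A small amount of care is needed because $\tau_b$ is itself random and correlated with $\eps$ (it is the \emph{balanced} oracle, depending on $f$ and $\eps$): the clean way around this is to \emph{not} plug in the realized $\tau_b$ inside the maximum but rather bound $\langle v_{\tau_b},\eps\rangle_n^2 \le \max_{k\le n}\max_{P:\,|P|\le k}\langle v_P,\eps\rangle_n^2\mathbf 1\{\tau_b \ge k-1\}$ — or, more simply, to bound the whole thing by $\sum_{k=1}^{n}\mathbf 1\{\lceil\tau_b\rceil \ge k\}\cdot\max_{P:|P|=k}\langle v_P,\eps\rangle_n^2$ is still awkward; instead one shows directly $\E[\max_{P:\,|P|\le \lceil\tau_b\rceil}\langle v_P,\eps\rangle_n^2]\le \E[\,\E[\max_{P:\,|P|\le m}\langle v_P,\eps\rangle_n^2]\big|_{m=\lceil\tau_b\rceil}\,]$ only if conditioning legitimizes it, which it does once one notes the inner maximum is monotone in $m$ and one may replace $\lceil\tau_b\rceil$ by $n$ at the cost of replacing $k$ by $n$ — too lossy — so the honest route is a peeling argument over dyadic ranges of $\tau_b$.

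The hard part will be exactly this last point: decoupling the data-driven, $\eps$-dependent stopping index $\tau_b$ from the subgaussian maximal inequality, i.e. getting a bound that scales with $\E[\tau_b]$ rather than with $n$. I expect the resolution to be a peeling/slicing argument: partition $[0,n]$ into dyadic blocks $\tau_b \in [2^{j-1}, 2^j)$, on each block bound the supremum of $\langle v_P,\eps\rangle_n^2$ over all partitions with at most $2^j$ cells by $C\bar\sigma^2 2^j\log(dn)/n$ via steps (ii)–(iii), multiply by $\PP(\tau_b \ge 2^{j-1})$, and sum the geometric-type series, which reproduces $\E[\tau_b]$ up to a constant absorbed into the stated 16. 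An alternative, possibly cleaner, is to observe that the map $m\mapsto \max_{|P|\le m}\langle v_P,\eps\rangle_n^2$ is nondecreasing and that $v_{\tau_b}$ is among the vectors counted with $m=\lceil\tau_b\rceil$, so that $\langle v_{\tau_b},\eps\rangle_n^2\le \Psi(\lceil\tau_b\rceil)$ where $\Psi(m):=\max_{|P|\le m}\langle v_P,\eps\rangle_n^2$; then a single application of a uniform-in-$m$ deviation bound for $\Psi(m) - C\bar\sigma^2 m\log(dn)/n$ (again by a union over the $n$ values of $m$, whose $\log n$ cost is dominated) gives $\E[\Psi(\lceil\tau_b\rceil)] \le C\bar\sigma^2\E[\lceil\tau_b\rceil]\log(dn)/n + \text{l.o.t.}$, which is the claim. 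Either way, the combinatorial count $(dn)^{Ck}$ for CART-reachable partitions and the subgaussian maximal inequality are the two workhorses, and matching the explicit constant $16$ is then bookkeeping.
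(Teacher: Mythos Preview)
Your proposal is correct and matches the paper's proof: the paper takes exactly your second ``cleaner alternative'', proving a uniform-in-$g$ deviation inequality
\[
\PP\Big(\max_{g=0,\ldots,G-1}\Big(\max_{\trace(\Pi)=k_g}\tfrac{n\langle v_\Pi,\eps\rangle_n^2}{2\bar\sigma^2}-k_g\log(dn)\Big)>\log(G)+u\Big)\le e^{-u}
\]
via a union bound over the $G\le n$ generations (with the combinatorial count $M_g\le(dn)^{k_g}$), integrating over $u$, and then inserting the random $\tau_b$ using $k_{g(\tau_b)+1}\le 2\tau_b\vee 1$. The one step the paper handles more carefully than your sketch is the interpolation: rather than saying it ``only contributes lower-order corrections'', the paper expands $(\Id-\Pi_{\tau_b})^2=(1-\alpha)^2(\Id-\Pi_{k_{g(\tau_b)}})+(1-(1-\alpha)^2)(\Id-\Pi_{k_{g(\tau_b)+1}})$ and applies Jensen's inequality to $\norm{(\Id-\Pi_{\tau_b})f}_n$ to bound the normalized cross term by the maximum of the two endpoint unit-vector inner products $\langle\tfrac{(\Id-\Pi_{k})f}{\norm{(\Id-\Pi_{k})f}_n},\eps\rangle_n^2$ for $k\in\{k_{g(\tau_b)},k_{g(\tau_b)+1}\}$.
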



The proof relies on bounding the number of all possible projections after $k_g$ splits, which is also the complexity measure underlying the results by \citet{nobel1996histogram, klusowski2023large} and Theorem 1 of \citet{scornet2015consistency}. For the original data-driven splitting in CART, as described in Section \ref{sec:cart_algo}, there are at most $d(n-1)$ different options for splitting in all $d$ coordinate directions and at all interstices between the $n$ data points. Thus, $(dn)^{k_g}$  bounds the number of possible projections, which leads to the entropy-type bound $\E[\log((dn)^{\tau_b+1})]=\E[\tau_b+1]\log(dn)$ in the proof.

Better bounds can be reached if we can restrict the number of possible partitions along the tree growth with high probability. For example, for additive regression functions in $s$ covariates with $s$ much smaller than $d$, Proposition 1 of \citet{scornet2015consistency} shows that asymptotically only splits in the $s$ relevant coordinate directions occur. If this can be ensured, then the bound $\E[\tau_b+1]\log(sn)$ in terms of the intrinsic dimension $s$ applies.
Further details on Proposition \ref{PropCTGeneral} are given in Appendix \ref{proof:ct_dep}.

\subsection{Main results for semi-global and global early stopping}
\label{sec:main}

We obtain the major theoretical results concerning early stopping for regression trees.

\begin{theorem}[Semi-global early stopping, independent splitting] \label{ThmSGind}
If the projection flow $(\Pi_t)$ for semi-global early stopping is independent of $(X_i,Y_i)_{i=1,\ldots,n}$ and  $\eps_i\sim N(0,\sigma^2)$ conditional on  $X_i$, then
the risk of semi-global early stopping satisfies the oracle inequality
\begin{align*}
\E[\norm{\hat F_\tau-f}_n^2]
&\le 9\inf_{t\in[0,n]}\E[\norm{\hat F_t-f}_n^2]+2\E[\abs{\kappa-\norm{\eps}_n^2}]
+ 48\frac{\sigma^2\log(2n)}{n}.
\end{align*}
\end{theorem}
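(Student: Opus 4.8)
The plan is to obtain this as a corollary of the general oracle inequality in Theorem~\ref{ThmOI1}, specialized to the independent semi-global flow, followed by a short rewriting of the oracle term into risk form. First I would note that if $\eps_i\sim N(0,\sigma^2)$ conditionally on $X_i$, then conditionally on the whole design $\E[\exp(\scapro{\lambda}{\eps}_n)\mid X]=\exp(\sigma^2\norm{\lambda}_n^2/(2n))$ for every $X$-measurable $\lambda$, so $\eps$ is $\sigma$-subgaussian in the sense required there. Taking $\bar\sigma=\sigma$, Theorem~\ref{ThmOI1} yields
\begin{align*}
\E[\norm{\hat F_\tau-f}_n^2]
&\le 9\,\E\Big[\inf_{t\in[0,n]}\big(\norm{(\Id-\Pi_t)f}_n^2+\norm{\Pi_t\eps}_n^2\big)\Big]+2\,\E[\abs{\kappa-\norm{\eps}_n^2}]\\
&\quad+32\sigma^2\tfrac{\log(2n)}{n}\,\E[\trace(\Pi_{\tau_b}-\Pi_{\tau_b}^2)]+4\,\E\Big[\scapro{\tfrac{(\Id-\Pi_{\tau_b})^2f}{\norm{(\Id-\Pi_{\tau_b})f}_n}}{\eps}_n^2\Big].
\end{align*}

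Next I would dispose of the two flow-dependent terms. Since the semi-global flow of Example~\ref{Ex1} has $k_g=g$, consecutive projection dimensions differ by one, so the semi-global case of Lemma~\ref{LemIntPol} bounds the interpolation term by $8\sigma^2\log(2n)/n$; and because $(\Pi_t)$ is here independent of $(X_i,Y_i)$ and the noise is Gaussian, Proposition~\ref{PropCTIndep} applies verbatim and bounds the cross term by $16\sigma^2\log(\sqrt2\,n)/n\le 16\sigma^2\log(2n)/n$. Together these contribute at most $24\sigma^2\log(2n)/n$, well within the stated $48\sigma^2\log(2n)/n$.

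It then remains to replace the bias--variance oracle by $\inf_t\E[\norm{\hat F_t-f}_n^2]$. Here I would use that, $\Pi_t$ being independent of the data and $\E[\eps\mid X]=0$, the cross term $\scapro{\Pi_t\eps}{(\Id-\Pi_t)f}_n$ in the decomposition of Lemma~\ref{LemRiskBound} has mean zero for each fixed $t$, so $\E[\norm{\hat F_t-f}_n^2]=\E[\norm{(\Id-\Pi_t)f}_n^2]+\E[\norm{\Pi_t\eps}_n^2]$; combined with $\E[\inf_t(\cdot)]\le\inf_t\E[\cdot]$ this gives
\[ \E\Big[\inf_{t\in[0,n]}\big(\norm{(\Id-\Pi_t)f}_n^2+\norm{\Pi_t\eps}_n^2\big)\Big]\le\inf_{t\in[0,n]}\E[\norm{\hat F_t-f}_n^2],\]
and substituting into the display above finishes the argument.

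I do not expect a real obstacle here: all the heavy lifting already sits in the cited results --- the $\omega$-wise decomposition of Proposition~\ref{PropDistanceOracle}, the flow-specific interpolation estimate of Lemma~\ref{LemIntPol}, and above all the entropy / normalized-quadratic-form bound behind Proposition~\ref{PropCTIndep}. The only step that genuinely exploits the independence assumption is the last one: without it the cross term in Lemma~\ref{LemRiskBound} need not have zero mean, and the oracle could not be collapsed to $\inf_t\E[\norm{\hat F_t-f}_n^2]$. The remaining care is purely bookkeeping --- tracking the numerical constants and the routine subgaussianity reduction.
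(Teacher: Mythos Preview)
Your proof is correct and follows the paper's own argument essentially line for line: apply Theorem~\ref{ThmOI1}, plug in the semi-global interpolation bound of Lemma~\ref{LemIntPol} and the independent cross-term bound of Proposition~\ref{PropCTIndep}, then collapse the oracle via $\E[\inf_t(\cdot)]\le\inf_t\E[\cdot]$ and the vanishing mean of the cross term under independence. One small inaccuracy in your closing commentary: independence is exploited in \emph{two} places, not just the oracle rewriting---Proposition~\ref{PropCTIndep} itself requires the flow to be independent of the data (otherwise only the weaker Proposition~\ref{PropCTGeneral} would be available).
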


\begin{remark}
As discussed in Remark \ref{RemErrDec}, the early stopping error $\E[\abs{\kappa-\norm{\eps}_n^2}]$ is usually of order $\sigma^2n^{-1/2}$ or larger and thus clearly dominates the last term in this bound. Typically, the oracle error will have a larger order than $n^{-1/2}$, having in mind that $n^{-2/(d+2)}$ is the best achievable minimax rate for estimating Lipschitz-continuous $f$ in dimension $d$ and noting $n^{-2/(d+2)}\ge n^{-1/2}$ already for $d\ge 2$. Then, the semi-global early stopping error under independent splitting will attain the same risk as the oracle,  up to a small numerical factor.

Usual convergence rate results for regression trees are even much slower than the nonparametric minimax rates and utilize modified splitting techniques; see, e.g., \citet{genuer2012variance}, \citet{biau2012analysis}, \citet{klusowski2021sharp} and the overview in \citet{zhang2024adaptive}.

\end{remark}


\begin{theorem}[Global early stopping, independent splitting]\label{ThmGind}
If the projection flow $(\Pi_t)$ for global early stopping is independent of $(X_i,Y_i)_{i=1,\ldots,n}$ and  $\eps_i\sim N(0,\sigma^2)$ conditional on $X_i$, then
the risk of global early stopping satisfies the oracle inequality
\begin{align*}
\E[\norm{\hat F_\tau-f}_n^2]
&\le 9\inf_{t\in[0,n]}\E[\norm{\hat F_t-f}_n^2]+2\E[\abs{\kappa-\norm{\eps}_n^2}]\\
&\quad
 + 8\frac{\sigma^2\log(2n)}{n}\big(\E[\norm{\Pi_{\tau_b}}_{HS}^2\vee 1] + 2\big).
\end{align*}
\end{theorem}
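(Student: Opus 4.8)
The plan is to specialize the general oracle inequality of Theorem \ref{ThmOI1} to the global projection flow of Example \ref{Ex1} and then to substitute the sharper bounds for the interpolation error and the cross term that become available under the independence assumption. Since, conditionally on $X_i$, we have $\eps_i\sim N(0,\sigma^2)$, the vector $\eps$ satisfies $\E[\exp(\scapro{\lambda}{\eps}_n)]=\exp(\sigma^2\norm{\lambda}_n^2/(2n))$, so it is $\sigma$-subgaussian with $\bar\sigma=\sigma$ and Theorem \ref{ThmOI1} applies. It delivers
\[
\E[\norm{\hat F_\tau-f}_n^2]\le 9\,\E\big[\inf_{t}\big(\norm{(\Id-\Pi_t)f}_n^2+\norm{\Pi_t\eps}_n^2\big)\big]+2\E[\abs{\kappa-\norm{\eps}_n^2}]+\mathrm{(I)}+\mathrm{(II)},
\]
where $\mathrm{(I)}=32\sigma^2\log(2n)n^{-1}\E[\trace(\Pi_{\tau_b}-\Pi_{\tau_b}^2)]$ is the interpolation term and $\mathrm{(II)}=4\E[\scapro{(\Id-\Pi_{\tau_b})^2f/\norm{(\Id-\Pi_{\tau_b})f}_n}{\eps}_n^2]$ is the cross term, both under convention \eqref{EqOI1Conv}.

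First I would rewrite the infimum term as $\inf_t\E[\norm{\hat F_t-f}_n^2]$. Because $(\Pi_t)$ is independent of $(X_i,Y_i)_{i=1,\ldots,n}$, conditioning on $(X_i)_i$ and on the flow gives $\E[\scapro{\Pi_t\eps}{(\Id-\Pi_t)f}_n]=0$ for each fixed $t$, so by the decomposition in Lemma \ref{LemRiskBound} we get $\E[\norm{(\Id-\Pi_t)f}_n^2+\norm{\Pi_t\eps}_n^2]=\E[\norm{\hat F_t-f}_n^2]$ pointwise in $t$. Combining this with $\E[\inf_t(\cdot)]\le\inf_t\E[(\cdot)]$ (Jensen) upgrades the first term to $9\inf_t\E[\norm{\hat F_t-f}_n^2]$, matching the statement.

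Next I would insert the two remaining bounds. For $\mathrm{(I)}$, Lemma \ref{LemIntPol} in the global case gives $\mathrm{(I)}\le 8\sigma^2\log(2n)n^{-1}\E[\norm{\Pi_{\tau_b}}_{HS}^2\vee 1]$. For $\mathrm{(II)}$, Proposition \ref{PropCTIndep} — which is exactly the independent Gaussian setting assumed here — gives $\mathrm{(II)}\le 16\sigma^2\log(\sqrt 2\,n)/n$, and since $\log(\sqrt 2\,n)\le\log(2n)$ this is at most $16\sigma^2\log(2n)/n = 2\cdot 8\sigma^2\log(2n)/n$. Adding $\mathrm{(I)}$ and $\mathrm{(II)}$ produces the factor $\E[\norm{\Pi_{\tau_b}}_{HS}^2\vee 1]+2$ in front of $8\sigma^2\log(2n)/n$; collecting all pieces yields the asserted inequality.

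There is no substantial obstacle here: the theorem is an assembly of Theorem \ref{ThmOI1}, Lemma \ref{LemIntPol} and Proposition \ref{PropCTIndep}, all already established, the heavy lifting (the entropy/quadratic-functional argument behind Proposition \ref{PropCTIndep} and the Hilbert--Schmidt control behind Lemma \ref{LemIntPol}) having been done there. The only points needing care are: verifying the subgaussian moment bound with $\bar\sigma=\sigma$ for Gaussian noise so that Theorem \ref{ThmOI1} is applicable; the pointwise-in-$t$ vanishing of the cross term under independence that permits replacing $\E[\inf_t(\cdot)]$ by $\inf_t\E[\norm{\hat F_t-f}_n^2]$; and the bookkeeping of numerical constants, in particular the crude estimate $\log(\sqrt 2\,n)\le\log(2n)$ used to merge the cross-term contribution into the stated form.
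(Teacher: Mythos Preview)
Your proposal is correct and follows essentially the same route as the paper: the paper's proof simply refers back to the semi-global case (Theorem \ref{ThmSGind}), which in turn inserts the interpolation bound of Lemma \ref{LemIntPol} and the cross-term bound of Proposition \ref{PropCTIndep} into Theorem \ref{ThmOI1} and uses independence to pass from $\E[\inf_t(\cdot)]$ to $\inf_t\E[\norm{\hat F_t-f}_n^2]$. Your additional explicit remarks on $\bar\sigma=\sigma$ and the constant bookkeeping $\log(\sqrt2\,n)\le\log(2n)$ are exactly what is needed to make the assembly precise.
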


\begin{remark}\label{RemGlobalIndep}
For global early stopping, the interpolation error will clearly dominate the cross-term under independent splitting. As discussed in Remark \ref{RemHS}, compared to the error at the balanced oracle, the interpolation error is usually still negligible.

This relatively large interpolation error might be disappointing at first sight, but note that we compare with the error at the best-interpolated oracle. This term would disappear if we had considered the oracle error among the projections at the generations only. The positive twist is that interpolation of globally grown trees might significantly improve the estimators, which is also apparent in the simulation results below (Figure \ref{fig:rel_eff_low}, Figure \ref{fig:rel_eff_high} and Table \ref{tab:mse}).
\end{remark}

\begin{theorem}[Semi-global and global early stopping, dependent splitting]\label{ThmGdep}
If the noise vector $\eps$ is $\bar\sigma$-subgaussian, then
the risk at semi-global and global early stopping, respectively, satisfies the oracle-type inequality
\begin{align*}
\E[\norm{\hat F_\tau-f}_n^2]
&\le 9\E\Big[\inf_{t\in[0,n]}\Big(\norm{(\Id-\Pi_{t})f}_n^2+\norm{\Pi_{t}\eps}_n^2\Big)\Big]+2\E[\abs{\kappa-\norm{\eps}_n^2}]
\\
&\quad +24\frac{\bar\sigma^2\log((d\vee 2)n)}{n}\E[\tau_b+1].
\end{align*}
\end{theorem}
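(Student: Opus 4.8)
The plan is to collect the bounds already assembled in the excerpt; no genuinely new work is needed at this step. Since $\eps$ is $\bar\sigma$-subgaussian, Theorem \ref{ThmOI1} applies, and its inequality \eqref{EqOI1b} bounds $\E[\norm{\hat F_\tau-f}_n^2]$ by the sum of four terms: the oracle term $9\,\E[\inf_{t\in[0,n]}(\norm{(\Id-\Pi_t)f}_n^2+\norm{\Pi_t\eps}_n^2)]$, the early stopping error $2\,\E[\abs{\kappa-\norm{\eps}_n^2}]$, the interpolation error $32\bar\sigma^2\log(2n)n^{-1}\E[\trace(\Pi_{\tau_b}-\Pi_{\tau_b}^2)]$, and the cross term $4\,\E[\scapro{(\Id-\Pi_{\tau_b})^2f/\norm{(\Id-\Pi_{\tau_b})f}_n}{\eps}_n^2]$ (with convention \eqref{EqOI1Conv}). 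The first two terms already appear verbatim in the claimed bound, so only the last two require simplification.

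For the interpolation error I would invoke Lemma \ref{LemIntPol}. In the semi-global case it gives $8\bar\sigma^2\log(2n)n^{-1}$, which, since $\tau_b\ge 0$, is trivially at most $8\bar\sigma^2\log(2n)n^{-1}\E[\tau_b+1]$. In the global case Lemma \ref{LemIntPol} gives $8\bar\sigma^2\log(2n)n^{-1}\E[\norm{\Pi_{\tau_b}}_{HS}^2\vee 1]$; here one uses $\norm{\Pi_{\tau_b}}_{HS}^2\le\trace(\Pi_{\tau_b})=\tau_b$, so that $\norm{\Pi_{\tau_b}}_{HS}^2\vee 1\le\tau_b\vee 1\le\tau_b+1$ and the interpolation error is again at most $8\bar\sigma^2\log(2n)n^{-1}\E[\tau_b+1]$. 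Hence in both regimes the interpolation error is $\le 8\bar\sigma^2\log(2n)n^{-1}\E[\tau_b+1]$.

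For the cross term I would apply Proposition \ref{PropCTGeneral}, whose hypotheses (subgaussian $\eps$, data-driven flow generated by semi-global or global early stopping) are exactly those of the present theorem; it yields $4\,\E[\scapro{(\Id-\Pi_{\tau_b})^2f/\norm{(\Id-\Pi_{\tau_b})f}_n}{\eps}_n^2]\le 16\bar\sigma^2\log(dn)n^{-1}\E[\tau_b+1]$. Finally, since $2\le d\vee 2$ and $d\le d\vee 2$ we have $\log(2n)\le\log((d\vee 2)n)$ and $\log(dn)\le\log((d\vee 2)n)$, so summing the interpolation and cross terms gives $(8+16)\bar\sigma^2\log((d\vee 2)n)n^{-1}\E[\tau_b+1]=24\bar\sigma^2\log((d\vee 2)n)n^{-1}\E[\tau_b+1]$, which together with the two unchanged terms is exactly the asserted oracle inequality.

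There is no substantial obstacle at this final step: all the difficulty has already been absorbed into Theorem \ref{ThmOI1} (the $\omega$-wise decomposition of Proposition \ref{PropDistanceOracle} combined with subgaussian maximal inequalities), Lemma \ref{LemIntPol} (the geometry of the interpolated flow), and above all Proposition \ref{PropCTGeneral}, where the data-driven cross term is controlled via the entropy bound $(dn)^{k_g}$ on the number of partitions reachable after $k_g$ CART splits. The only points requiring a little care are the Hilbert--Schmidt-to-trace estimate that folds the global interpolation bound into the $\E[\tau_b+1]$ form, and the replacement of $d$ by $d\vee 2$, which is forced by the $\log(2n)$ appearing in the global interpolation error and not dominated by $\log(dn)$ when $d=1$.
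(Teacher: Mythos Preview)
Your proposal is correct and follows exactly the paper's own route: insert the bounds of Lemma \ref{LemIntPol} and Proposition \ref{PropCTGeneral} into the oracle inequality \eqref{EqOI1b} of Theorem \ref{ThmOI1}, use $\norm{\Pi_{\tau_b}}_{HS}^2\le\trace(\Pi_{\tau_b})=\tau_b$ to absorb the global interpolation term, and merge the logarithms via $d\vee 2$. The paper's proof is in fact terser than yours, so you have supplied precisely the details it leaves implicit.
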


\begin{remark}
In the most interesting case of dependent splitting, the universal cross-term bound of order $\log(dn)n^{-1}\E[\tau_b]$ dominates the interpolation error so that the advantage of semi-global splitting disappears. It is still not larger than the standard stochastic error bound for regression trees, compare Remark \ref{RemHS}.  Compared to \citet{blanchard2018early, stankewitz2024early} and \citet{hucker2024early}, the cross-term bound is much larger because of the high statistical complexity of the standard CART algorithm. In all results of the literature on regression trees a $\log(dn)$-factor appears additionally to minimax rates over classical function spaces and the last remainder term in the oracle inequality will become asymptotically negligible.

\end{remark}

\begin{remark}
We do not consider the out-of-sample prediction error here. The transfer of bounds from the empirical to the population prediction error depends on the regression tree algorithms and often requires some additional assumptions. General results are obtained by \citet{wager2015adaptive} in the case where the splitting rule ensures a minimal percentage of the parent node's sample size also for the child nodes. Other results are obtained, e.g., by \citet{nobel1996histogram} and \citet{klusowski2023large}. Let us stress that our early stopping error bounds, though specialized to the original CART algorithm, hold in much wider generality, in particular they apply verbatim to more constrained tree growing algorithms, reducing the number of potential splits.
\end{remark}

\section{Implementation and application to real data}
\label{sec:implementation}
In this section, we discuss the nearest neighbour estimator for $\sigma^2$, cost-complexity post-pruning and an improved two-step procedure. Then empirical results for standard data sets are presented and compared.

\subsection{Noise level estimation}

We want to choose the threshold $\kappa$ close to the noise level $\norm{\eps}_n^2$ or its expectation $\sigma^2$.
To this end, nearest neighbour estimators of $\sigma^2$ are simple and suitable for random design in higher dimensions. Under Lipschitz smoothness of the regression function $f$, they  can achieve the rate  $n^{-1/2}\vee n^{-2/d}$ \citep{devroye2013strong}, which is of smaller order than the minimax estimation rate $n^{-2/(2+d)}$ for $f$.
Our implementation makes use of the nearest neighbour estimator
\begin{equation}
     \hat{\sigma}^2 = \frac{1}{n} \sum_{i=1}^n Y_i^2 - \frac{1}{n} \sum^n_{i=1} Y_i Y_{nn(i)}, \label{eq:nn_est}
\end{equation}
proposed by \citet{devroye2018nearest} and building up on results of \citet{biau2015lectures}, where $X_{nn(i)}$ is the nearest neighbour of $X_{i}$ with respect to the Euclidean distance in $\mathbb{R}^d$ and $Y_{nn(i)}$ is the corresponding response.

In practice, the estimator $\hat\sigma^2$ is biased upwards, which leads our stopping algorithms to terminate slightly earlier than intended. We can understand this in the case $\{X_{nn(i)} \rvert i=1,\ldots,n \} = \{X_{j} \rvert j=1,\ldots,n \}$, where
\begin{align*}
        \hat{\sigma}^2&= \frac12\Big(\frac{1}{n} \sum_{i=1}^n Y_i^2 - \frac{2}{n} \sum^n_{i=1} Y_i Y_{nn(i)}+\frac{1}{n} \sum_{i=1}^n Y_{nn(i)}^2\Big) \\
        \Rightarrow \E[\hat\sigma^2]&=\sigma^2+\frac1{2n}\sum_{i=1}^n\E[(f(X_i)-f(X_{nn(i)}))^2] \ge \sigma^2.
\end{align*}

Here, we do not pursue further improvements of $\hat\sigma^2$, which nevertheless would have the potential to generate somewhat better results for early stopping in practice.

The noise estimate serves as an input to construct the early stopped regression tree, rather than being an alternative to it. While not of our interest, further noise level estimators could be investigated. For example, classical difference-based estimators \citep{rice1984bandwidth, gasser1986residual}, or more robust methods like kernel smoothing, could potentially improve the practical performance of our early stopping rules.

\subsection{Cost-complexity pruning}
\label{sec:pruning}



\noindent The state-of-the-art approach for determining the structure of a regression tree is cost-complexity pruning with weakest-link pruning (also called post-pruning), as introduced by \citet{breiman1984classification}. Oracle inequalities in a similar spirit to our work are developed by \citet{gey2005model}. 

Let $T\preceq T_n$ mean that $T$ is a subtree of the fully grown tree $T_n$, which has $n$ terminal nodes. The pruned subtree shares the same root node as the fully grown tree.
By Theorem 10.9 of \citet{breiman1984classification}, for a given cost-complexity hyperparameter \( \lambda \) there exists a unique subtree that minimizes the risk function $T_{\lambda} = \argmin _{T \preceq T_n} \{R^2(T) + \lambda |T| \}$, 
where $R^2(T)$ denotes the residual norm of subtree $T$. After growing the tree to its full depth, the weakest-link pruning searches iteratively for the internal node to collapse, which increases $R^2(T)$ the least. This procedure is repeated iteratively until only the root node is left. As a result, a sequence of increasing hyperparameters is obtained, denoted by $\Lambda_{\text{prun}}$. 
This sequence of hyperparameters corresponds to a sequence of pruned subtrees with decreasing complexity, such that $\lambda = 0$ gives $T_n$, and a sufficiently large hyperparameter results in the root node. 

In our simulation and empirical applications, the optimal hyperparameter $\lambda_{\text{opt}} \in \Lambda_{\text{prun}}$ is selected using 5-fold cross-validation. The pruned regression tree estimator is then denoted as $\hat{F}_{\text{prun}}=\hat{F}(\lambda_{ \text{opt} } )$, where the simplified notation $\hat{F}_{\text{prun}}$ is used subsequently. The oracle is determined by selecting the pruned subtree that minimizes the error on a test set, denoted by subscript \( n' \). We use the relative efficiency as a performance measure, which quantifies the performance of the pruned estimator relative to its oracle, defined as $ \min _{\lambda \in \Lambda_{prun}}\|\hat{F}({\lambda}) - f \|_{n^{\prime}} /\|\hat{F}_{ \text{prun} }-f\|_{n^{\prime}}$.



\subsection{A two-step procedure}\label{sec:two_step}

\noindent We propose a two-step hybrid procedure by combining the advantages of a first step, a top-down early stopping approach, and a second step, a bottom-up pruning approach. The procedure inherits the computational advantage of the early stopping rule in the first step, while the bottom-up nature of the second pruning step allows more flexibility in building the final regression tree by using less biased data. In simple truncated series estimation, a two-step approach combining early stopping with AIC-like model selection was analyzed in \citet{blanchard2018early}. 

The procedure is described in Algorithm \ref{alg:2step}. We consider the global early stopping at generation $\hat g$, according to Definition \ref{def:orth} with threshold $\kappa$. To further reduce the risk of potential underfitting, we consider the tree obtained at generation $\hat g+1$ as a starting point for cost-complexity pruning. The sequence of pruning parameters $\Lambda_{\text{2step}}$ is different since we do not use the fully grown tree $T_n$, as in the previous section. Further, the number of potential hyperparameters to consider is substantially smaller after stopping early in the first step, such that we do not need to consider the full path of pruning hyperparameters as in Section \ref{sec:pruning}.

The optimal tuning parameter $\lambda^{'}_{\text{opt}} \in \Lambda_{\text{2step}}$ is determined by 5-fold cross-validation. Note that both hyperparameter choices $\lambda_{\text{opt}}$ (pruning) and $\lambda^{'}_{\text{opt}}$ (two-step) are not guaranteed to be equal. Our empirical results indicate, however, that the two-step estimator is often quite similar to the pruned estimator from Section \ref{sec:pruning}. Similar to the pruned estimator, we define the relative efficiency of the two-step estimator as 
$ \min _{ \lambda\in \Lambda_{2step }}\|\hat{F}(\lambda ) - f \|_{n^{\prime}}/\|\hat{F}_{ \text{2step} }-f\|_{n^{\prime}}$

A theoretical analysis of this two-step procedure, while desirable, is beyond the scope of this work and left for future research. It is particularly challenging due to the intricate dependency between the first stage (where the early stopping rule selects a subtree) and the second stage results (where cross-validation is performed to prune that specific subtree). We solely aim to introduce the procedure and demonstrate its empirical performance. 

\begin{algorithm}
\spacingset{1.2}
\small
\caption{Two-step procedure}
\label{alg:2step}
\begin{algorithmic}[1]

\STATE Grow the early stopped regression tree as in Algorithm \ref{alg:bfs_global}, resulting in estimator $\hat{F}_{\hat g_{\text{glob}}}$
\STATE Apply cost-complexity pruning to get the sequence of cost-complexity hyperparameters for $\hat{F}_{\hat g_{\text{glob}} +1}$, denoted as $\Lambda_{2step}$

\STATE Determine the optimal hyperparameter $\lambda^{'}_{\text{opt}} \in \Lambda_{2step }$ by 5-fold cross-validation
\STATE Output: Two-step regression tree estimator $\hat{F}_{\text{2step}} = \hat{F}_{\text{2step}}(\lambda^{'}_{\text{opt}})$
\end{algorithmic}
\end{algorithm}

\subsection{Data application}

\begin{table}[tbp]
\spacingset{1.2}
\centering
\footnotesize
\begin{tabular}{l|l|l|l|l|l|l}
\hline
Data set & $d, n$ & Pruning & Global & Global Int & Two-Step & Semi \\ \hline
Boston       & $14, 506$  & 3.89 (23/17s)  & 4.87 (8/0.3s) & 5.12 (8/0.3s) & 3.97 (8/1s) & 5.35 (5/0.6s) \\
Comm.  & $100, 1994$ & 0.15 (8/158s) & 0.15 (8/3s) & 0.16 (8/3s) & 0.15 (8/14s) & 0.17 (26/7s) \\
Abalone  & $8, 4177$ & 2.34 (20/112s) & 2.38 (16/2s) & 2.41 (16/2s) & 2.33 (22/8s) & 2.58 (51/2s) \\
Ozone        & $9, 330$ & 4.75 (8/8s) & 4.72 (8/0.1s) & 4.68 (8/0.1s) & 4.74 (10/1s) & 5.05 (7/0.2s) \\
Forest    & $11, 517$ & 38.79 (6/10s) & 32.94 (4/0.08s) & 31.98 (4/0.08s) & 31.51 (3/0.4s) & 32.81 (3/0.1s) \\
\hline
\end{tabular}
\vspace{-0.3cm}
\caption{Median RMSE (in brackets: median terminal nodes/median run times in seconds) for different datasets and methods.}
\label{tab:rmspe}
\end{table}

We apply the early stopped regression tree on the following publicly available benchmark datasets.
The datasets \textit{Ozone, Forest Fires, Abalone, Communities, and Crime} were sourced from the UCI Machine Learning Repository. In addition, the \textit{Boston} and \textit{Ozone} datasets were accessed via the \texttt{MASS} and \texttt{mlbench} packages in R, respectively. 


We split the data randomly into $90\%$ training data and $10\%$ test data. The estimators are fitted using the training data, and their performance is evaluated on the untouched test data. This procedure is repeated $M=300$ times, and the median across the Monte Carlo iterations is reported. As a performance measure, we calculate the root mean squared error (RMSE) on the test set. The critical value for the early stopped estimator is taken to be $\kappa = \hat{\sigma}^2$, where the nearest neighbour estimator $\hat{\sigma}^2$ is obtained on the training data as outlined in \eqref{eq:nn_est}.
The RMSE for different data sets is reported in Table \ref{tab:rmspe}, together with the number of terminal nodes and the computational run times.

The prediction performance of the early stopping methods and the cost-complexity pruning are seen to be on par. The running time, however, of the early stopped estimators compared to the pruning run times is faster by a factor of around 50, consistently across all data sets. The reason is that cross-validation is not required for our proposed stopping methods, and that the early stopped regression tree is not grown until its full depth. The number of terminal nodes is similar across the estimators, with a tendency for semi-global stopping to have slightly more nodes than global stopping. Even in the two-step procedure, which relies on cross-validation in the second step, the number of potential trees is drastically limited by applying early stopping in the first step.
The early stopping methods outperform pruning in the ozone and forest datasets. For the Boston data, the pruning and two-step methods perform better than the pure early stopping methods.

\section{Simulation results}
\label{sec:sim}
Our goal is to introduce a computationally efficient and theoretically principled alternative to cost-complexity pruning \citep{breiman1984classification} for constructing a single, interpretable regression tree. Thus, the pruned regression tree serves as our primary benchmark for the simulation. 
While ensemble methods like Random Forests \citep{breiman2001random} often yield superior prediction accuracy, they sacrifice interpretability and lead to additional complexity in terms of hyperparameter tuning. 


\paragraph*{Simulation A: low-dimensional setting}
Following \citet{chaudhuri2023cross}, we adjust the rectangular, circular, and elliptical signals for random design in dimension $d=5$. Additionally, we include a trigonometric signal.
The design  is uniform on the unit hyper-cube with $ X_i\overset{\mathrm{iid}}{\sim} U(0,1)^{5} $, $i=1,\ldots,n$, and  $n = 1000$ observations for both the training and test sets. In $M=300$ Monte Carlo runs we simulate data according to \eqref{EqRegr} with i.i.d. noise variables $\varepsilon_i \sim N(0,1)$ and regression functions for $x=(x_1,\ldots,x_5)$.
The functions used are \textit{Rectangular}: $f(x) = \mathbf{1}(\tfrac{1}{3} \le x_1, x_2 \le \tfrac{2}{3})$; \textit{Circular}: $f(x) = \mathbf{1}((x_1 - \tfrac{1}{2})^2 + (x_2 - \tfrac{1}{2})^2 \le \tfrac{1}{16})$; \textit{Sine cosine}: $f(x) = \sin(x_1) + \cos(x_2)$; and \textit{Elliptical}: $f(x) = 20 \exp(-5((x_1 - \tfrac{1}{2})^2 + (x_2 - \tfrac{1}{2})^2 - 0.9(x_1 - \tfrac{1}{2})(x_2 - \tfrac{1}{2})))$.


\paragraph*{Simulation B: high-dimensional setting}
The covariates are drawn as $X_i \overset{\mathrm{iid}}{\sim} U(-2.5, 2.5)^{30}$ and the noise variables as $\varepsilon_i \sim N(0,1)$. 
We generate $n=1000$ observations each for the training and the test set, with a regression function determined by the additive model $f(x) = g_1(x_1) + g_2(x_2) + g_3(x_3) + g_4(x_4)$, where $x \in \R^{30}$.
The functions $g_j$ are taken out of the class of smooth functions, step functions, linear splines or Hills-type functions \citep{haris2022generalized}, see the illustration in Figure \ref{fig:additive_dgp} in the \hyperref[simulation]{Appendix}.

\subsection{Estimator-specific relative efficiency}

\begin{table}[tbp]
\spacingset{1.2}
\centering
\begin{tabular}{l|l|l|l}
Method & Stopping rule & Estimator & Relative efficiency  \\
\hline
Pruning & Section \ref{sec:pruning} & $\hat{F}_{\text{prun}}$ & $\min_{\lambda \in \Lambda_{\text{prun}}}\|\hat{F}(\lambda) - f\|_{n^{\prime}} / \|\hat{F}_{\text{prun}}-f\|_{n^{\prime}}$ \\
Global & Definition \ref{def:orth} & $\hat{F}_{\hat{g}_{\text{glob}}}$ & $\min_{t \in [0, n]}\|\hat{F}_{t} - f\|_{n^{\prime}} / \|\hat{F}_{\hat{g}_{\text{glob}}} - f\|_{n^{\prime}}$ \\
Global Int & Definition \ref{def:inter} & $\hat{F}_{\tau}$ & $\min_{t \in [0, n]}\|\hat{F}_{t} - f\|_{n^{\prime}} / \|\hat{F}_{\tau} - f\|_{n^{\prime}}$ \\
Two-Step & Section \ref{sec:two_step} & $\hat{F}_{\text{2step}}$ & $\min_{\lambda\in \Lambda_{\text{2step}}}\|\hat{F}(\lambda) - f\|_{n^{\prime}} / \|\hat{F}_{\text{2step}}-f\|_{n^{\prime}}$ \\
Semi-global & Definition \ref{def:orth} & $\hat{F}_{\hat{g}_{\text{semi}}}$ & $\min_{g \in \{0,\ldots,n\}}\|\hat{F}_{g} - f\|_{n^{\prime}} / \|\hat{F}_{\hat{g}_{\text{semi}}} -f\|_{n^{\prime}}$ \\
\end{tabular}
\vspace{-0.3cm}
\caption{Overview of stopping methods and the estimator-specific relative efficiency. The errors are calculated on the test set, denoted by subscript $n'$.}
\label{tab:rel_eff_def}
\end{table}

We assess the adaptation of early stopping to the best (oracle) estimator obtained along the tree growing procedure by its relative efficiency for each Monte Carlo run, which is a standard technique in the evaluation of regularization methods. The relative efficiency is evaluated on the unseen test data and is, by definition, the better, the closer it comes to the best possible value 1. The respective definitions are given in Table \ref{tab:rel_eff_def}. For the non-interpolated global early stopping estimators, we take as a benchmark the interpolated global oracle estimator, which does not suffer from possible overshooting from one generation to the other, compare also Remark \ref{RemGlobalIndep}. Implementation details on the post-pruning procedure are given in Appendix \ref{app:pruning}.
Keeping in mind that the different sequential tree growing algorithms naturally give different regularization paths $(\hat F_t,t\in[0,n])$, we compare the practical performance of the corresponding oracle estimators in Appendix \ref{SecOrComp}.




\begin{figure}[tbp]
    \centering
    \begin{subfigure}[b]{0.4\textwidth}
        \centering
        \includegraphics[width=\textwidth]{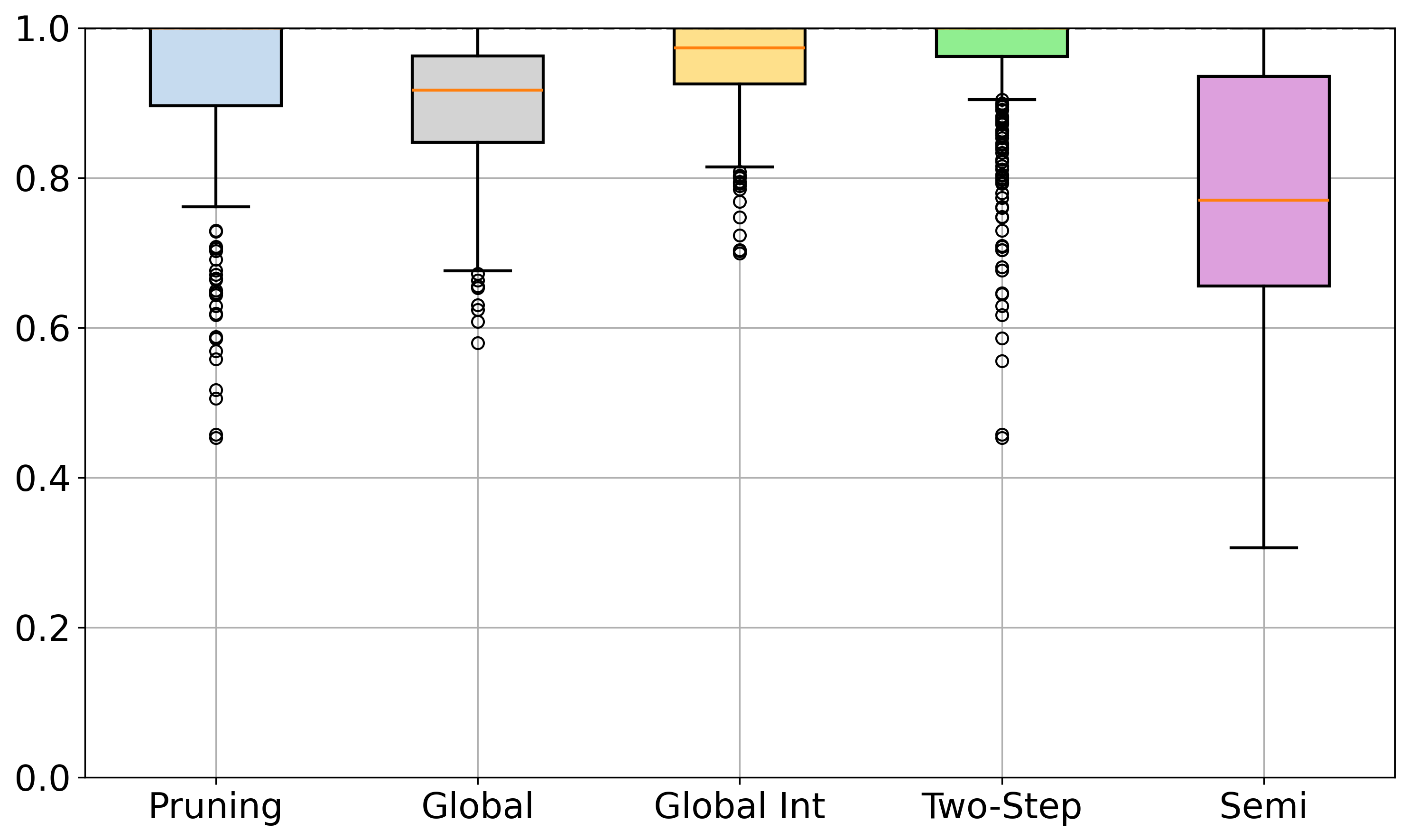}
        \caption{Rectangular function}
    \end{subfigure}
    \hspace{-0.2cm}
    \begin{subfigure}[b]{0.4\textwidth}
        \centering
        \includegraphics[width=\textwidth]{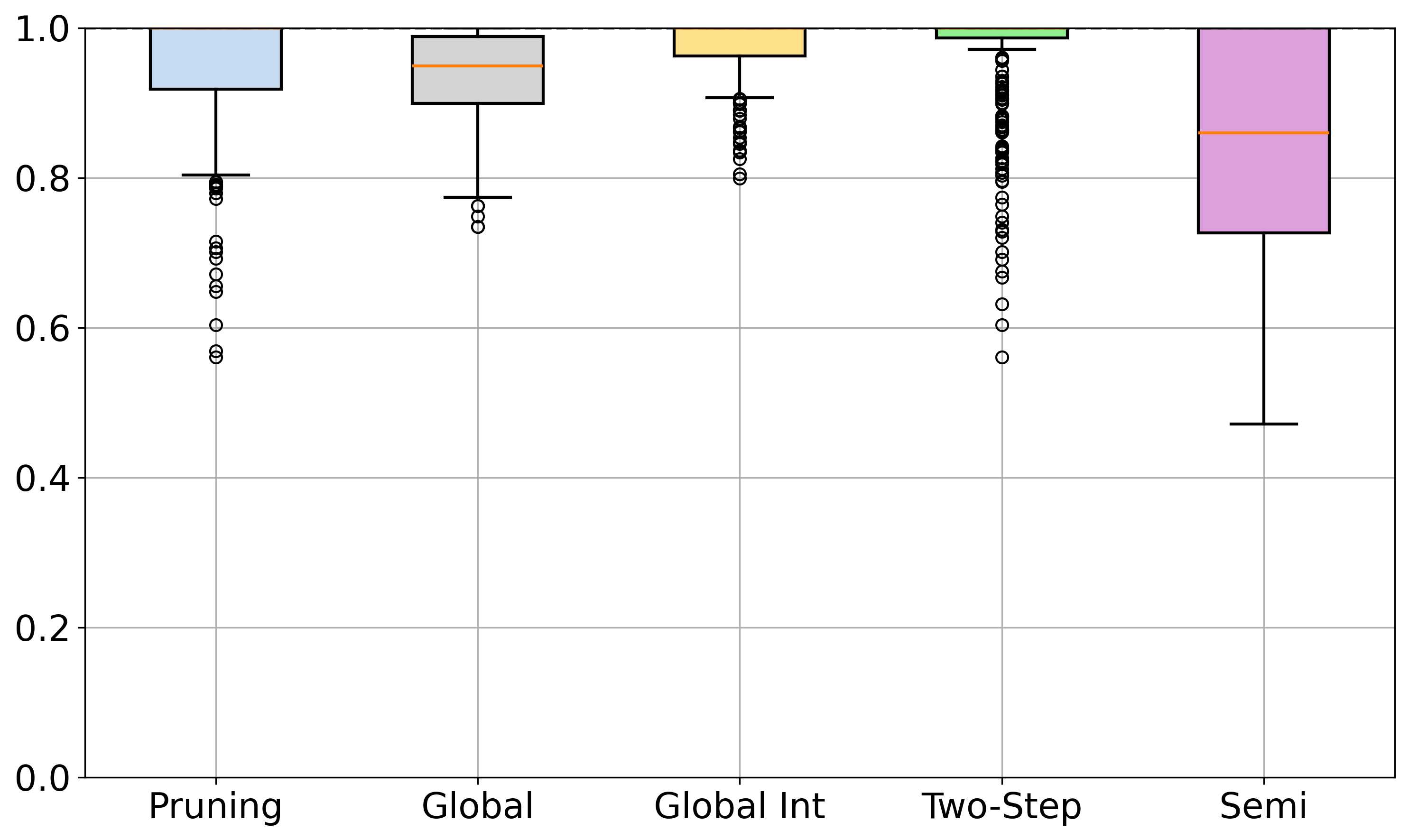}
        \caption{Circular function}
    \end{subfigure}
    \begin{subfigure}[b]{0.4\textwidth}
        \centering
        \includegraphics[width=\textwidth]{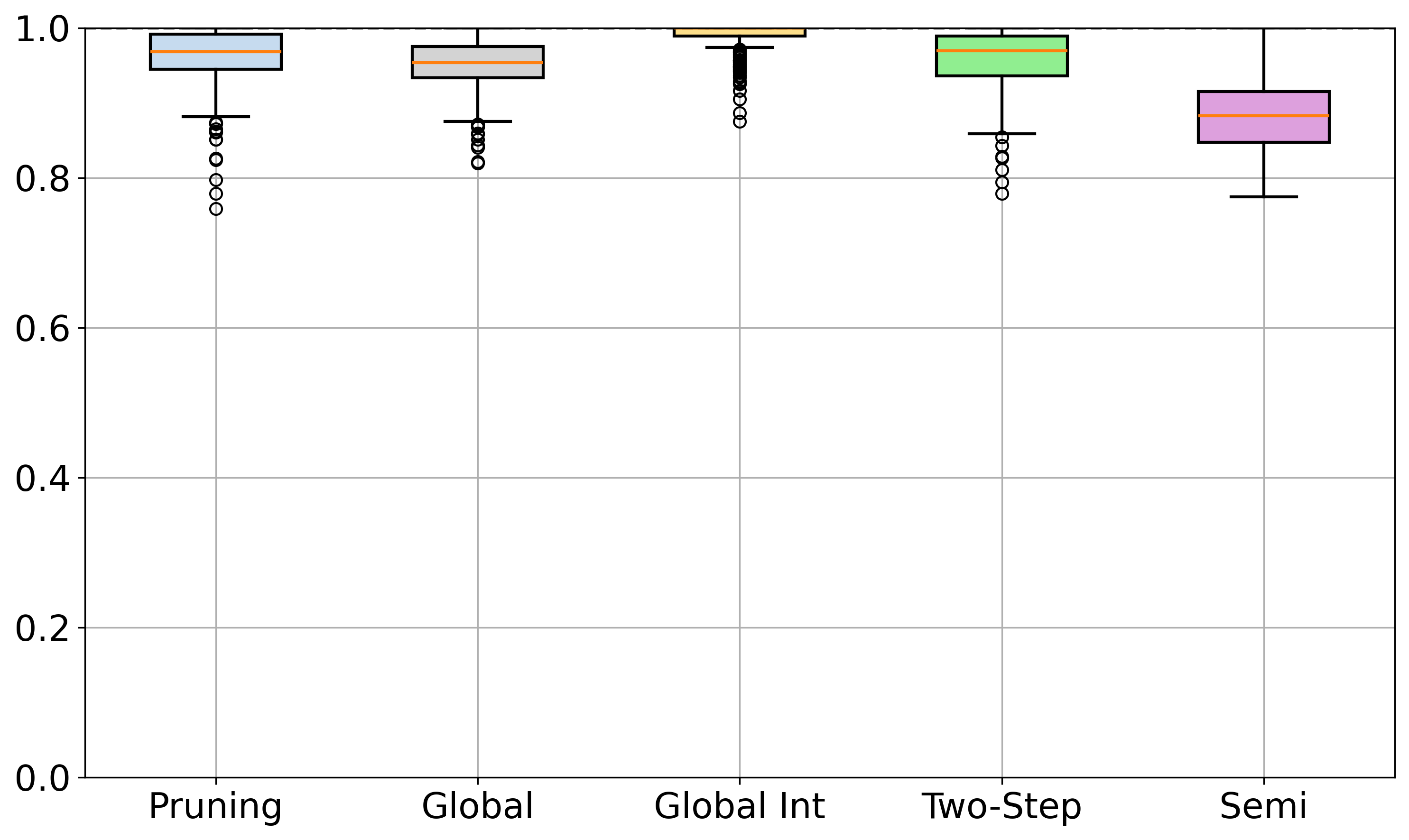}
        \caption{Sine cosine function}
    \end{subfigure}
    \hspace{-0.2cm}
    \begin{subfigure}[b]{0.4\textwidth}
        \centering
        \includegraphics[width=\textwidth]{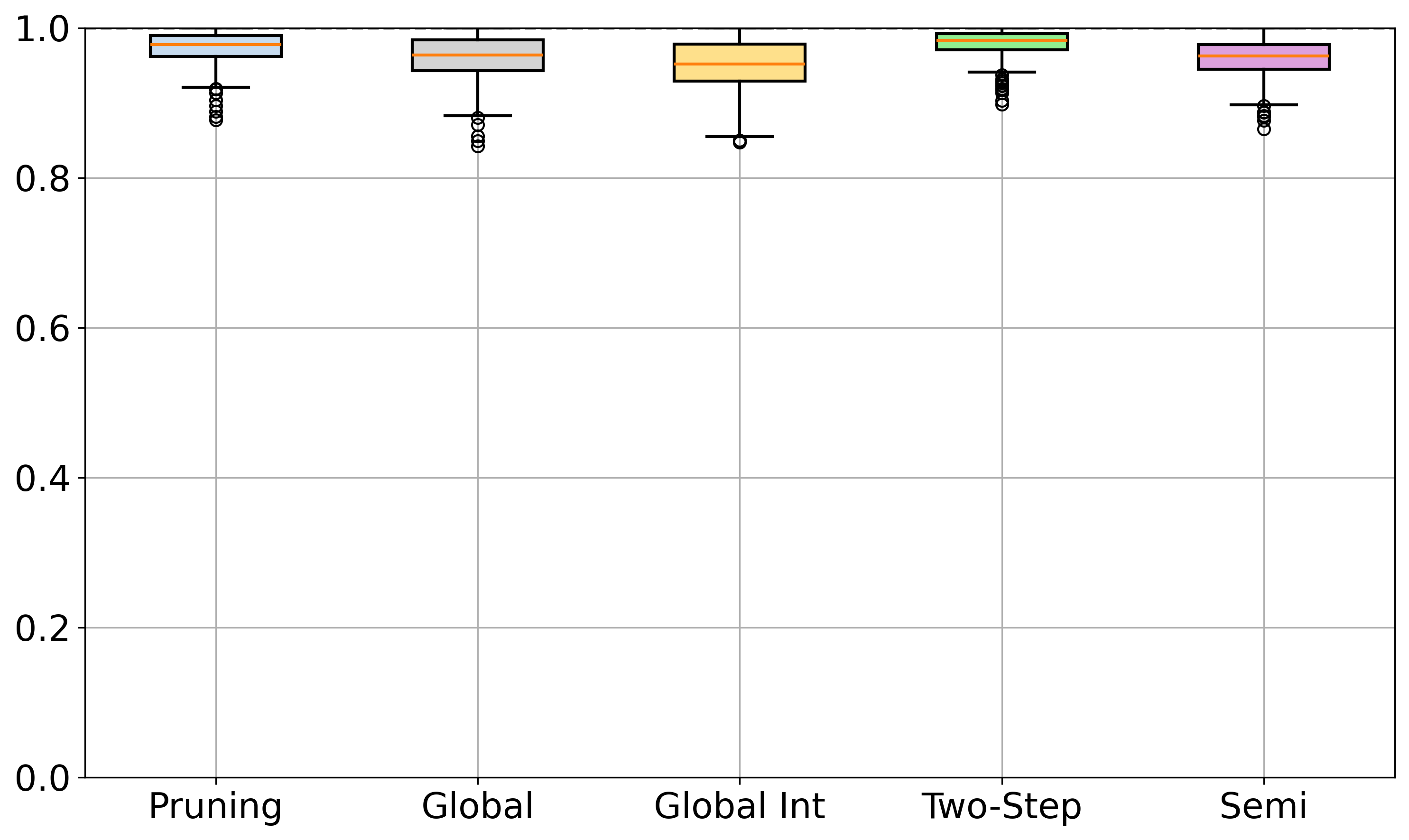}
        \caption{Elliptical function}
    \end{subfigure}
    \caption{Relative efficiency for low-dimensional signals. Higher values are better. }
    \label{fig:rel_eff_low}
\end{figure}




\begin{figure}[tbp]
    \centering
    \begin{subfigure}[b]{0.4\textwidth}
        \centering
        \includegraphics[width=\textwidth]{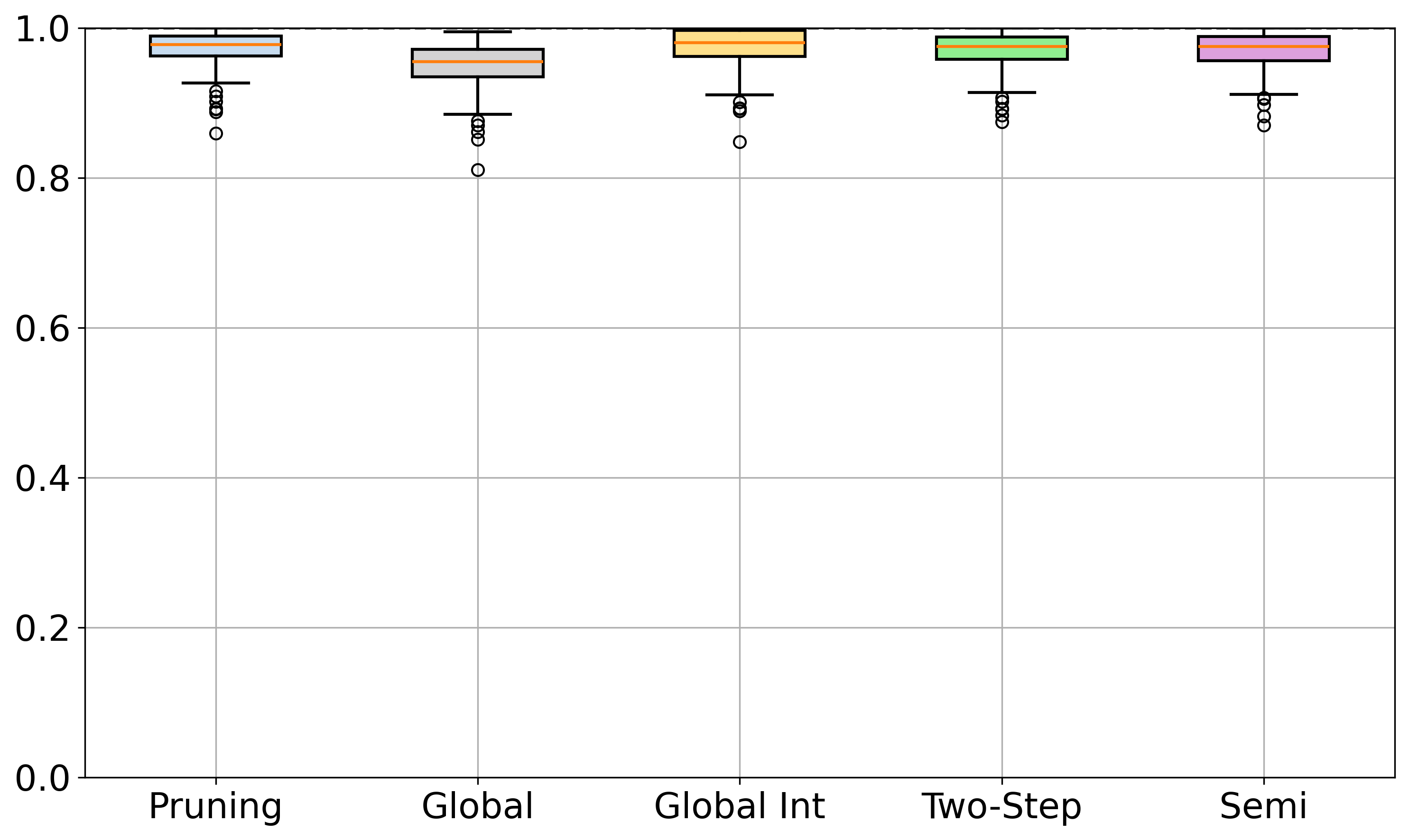}
        \caption{Smooth functions}
    \end{subfigure}
    \hspace{-0.2cm}
    \begin{subfigure}[b]{0.4\textwidth}
        \centering
        \includegraphics[width=\textwidth]{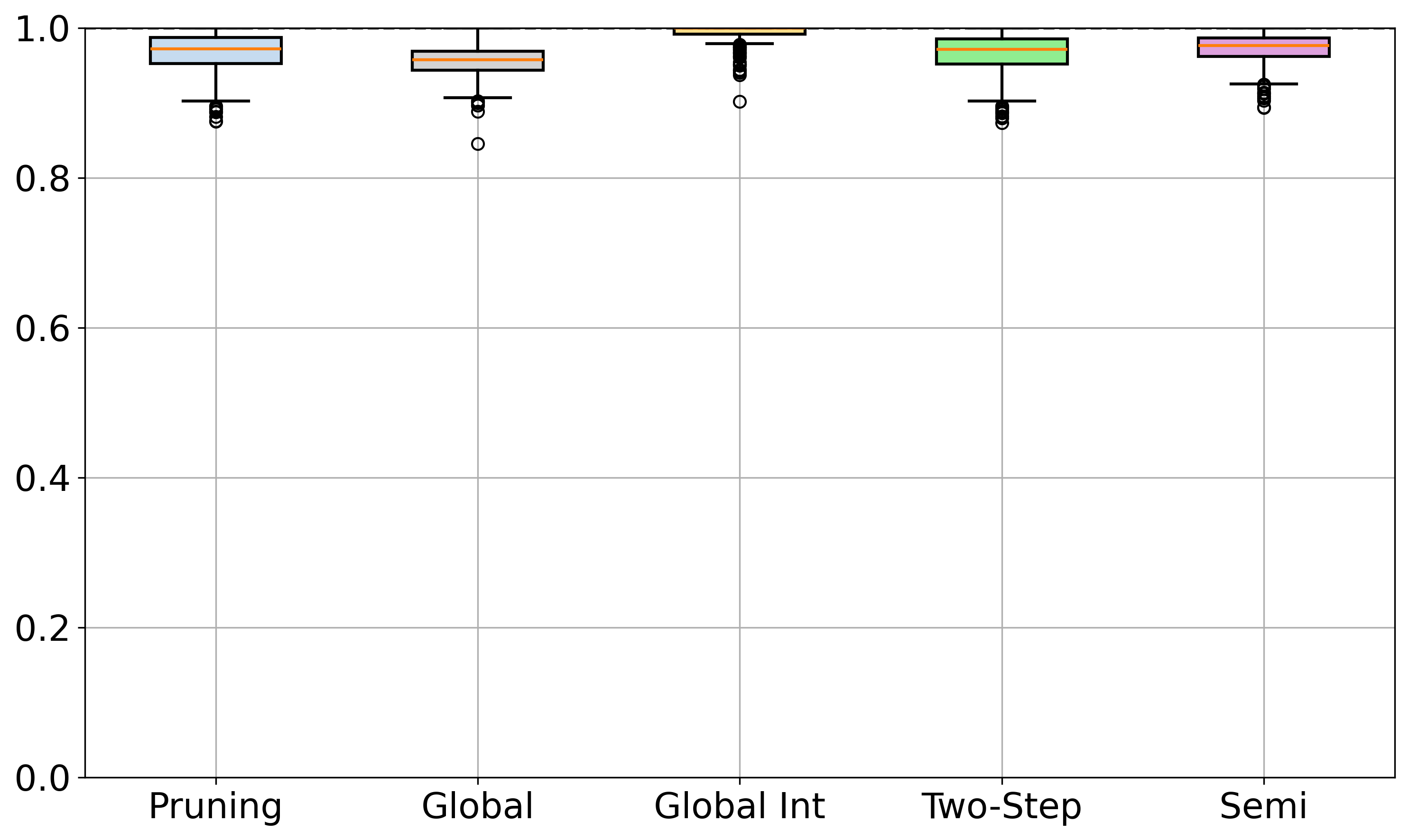}
        \caption{Step functions}
    \end{subfigure}
    \begin{subfigure}[b]{0.4\textwidth}
        \centering
        \includegraphics[width=\textwidth]{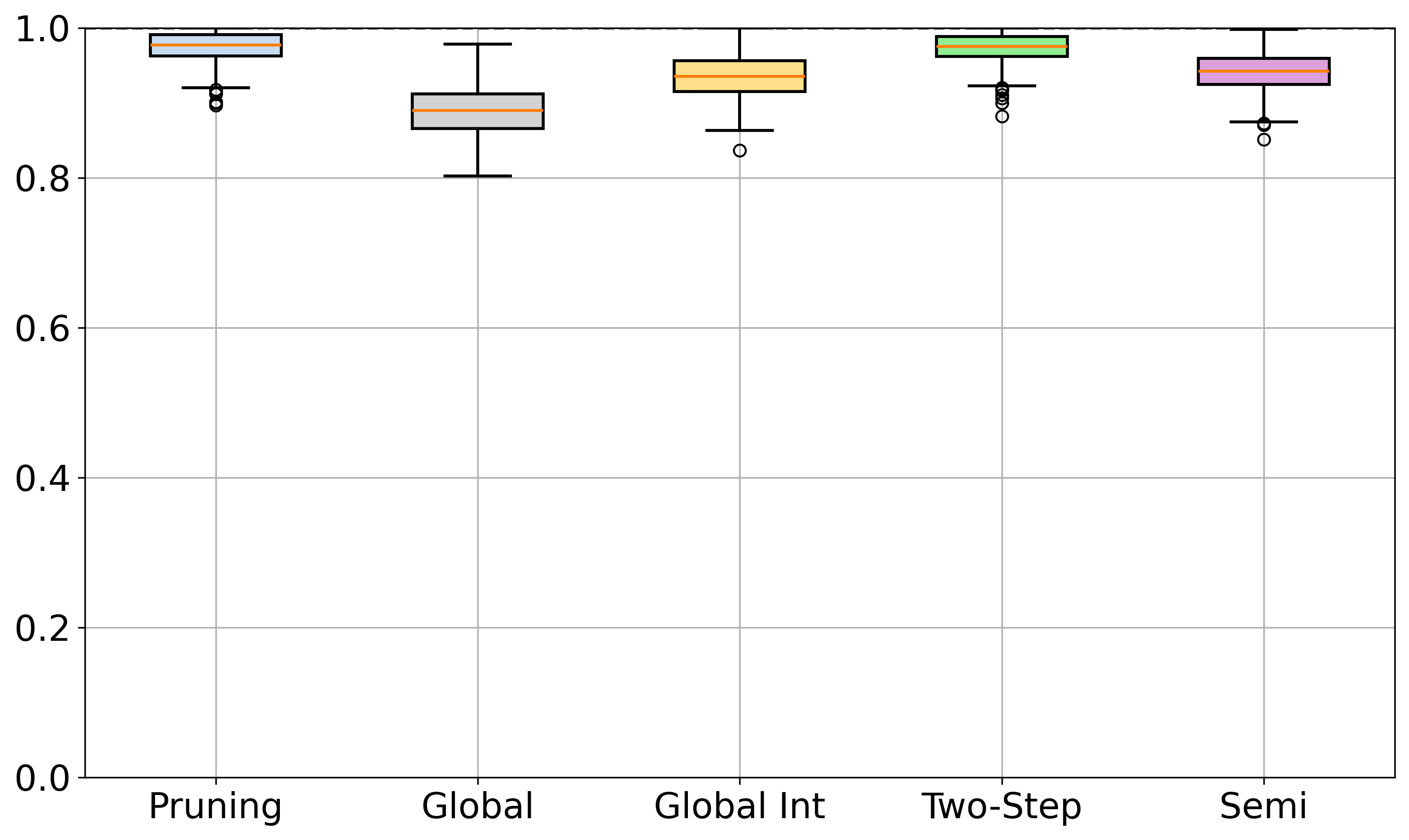}
        \caption{Piecewise linear functions}
    \end{subfigure}
    \hspace{-0.2cm}
    \begin{subfigure}[b]{0.4\textwidth}
        \centering
        \includegraphics[width=\textwidth]{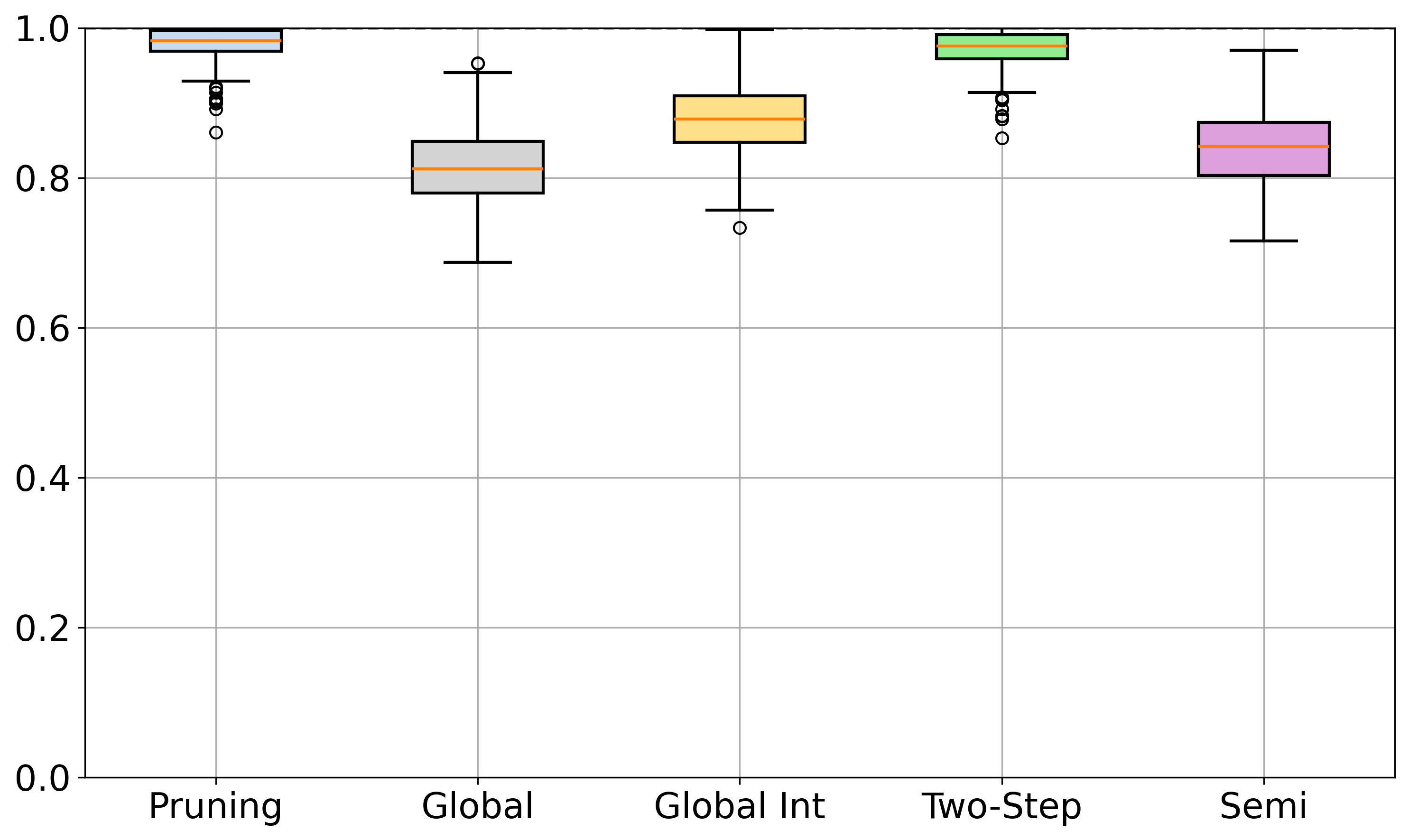}
        \caption{Hills-type functions}
    \end{subfigure}
    \caption{Relative efficiency for high-dimensional additive. Higher values are better.}
    \label{fig:rel_eff_high}
\end{figure}

The estimator-specific relative efficiency, with threshold value $\kappa = \sigma^2$, is illustrated in boxplots for the low- and high-dimensional simulations in Figures \ref{fig:rel_eff_low} and \ref{fig:rel_eff_high}, respectively. The relative efficiency is constantly above 0.5 so that the errors for the early stopping and pruning estimators are always smaller than twice the associated oracle errors. Often, the median relative efficiency is around 0.9, which is surprisingly good for a nonparametric adaptation problem. This applies quite homogeneously across all five methods and eight simulation settings.

Comparing the different results in Figures \ref{fig:rel_eff_low} and  \ref{fig:rel_eff_high}, we note three conspicuous features. First, interpolation clearly improves the performance for global early stopping. Second, the semi-global relative efficiency closely matches the global relative efficiency across most simulated functions, except the {\it rectangular} and {\it circular} functions in Figure \ref{fig:rel_eff_low}. This is evidently explained by the smaller semi-global oracle error for these examples, compare Figure \ref{fig:ratios}(a) below. The overall semi-global prediction error for these examples, as well as the other low dimensional examples, is in fact better than the global early stopping error, see Table \ref{tab:mse} below. Third, the relative efficiencies in high dimensions tend to be higher, caused by larger oracle errors, with the exception of the {\it Hills-type function} model, where the global and the semi-global early stopping suffers from stopping too late. This is indicated by the number of terminal nodes of the estimator compared to its oracle in Table \ref{tab:times_nodes}.
Finally, there is no significant difference between the relative efficiencies of early stopping and pruning for low-dimensional and high-dimensional simulations, except for the {\it Hills-type} example, for the aforementioned reason.

\begin{table}[tbp]
\spacingset{1.2}
\centering
\footnotesize
\setlength\tabcolsep{3pt} 
\begin{tabular}{lllllll}
\hline
 & Pruning & Global & Global Int & Two-Step & Semi & Deep \\ \hline
Rectangular & 0.21 (0.20) & 0.33 (0.30) & 0.31 (0.30) & 0.20 (0.19) & 0.30 (0.22) & 1.05 \\
Circular & 0.25 (0.24) & 0.36 (0.34) & 0.35 (0.34) & 0.24 (0.24) & 0.30 (0.24) & 1.06 \\
Sine cosine & 0.21 (0.19) & 0.21 (0.18) & 0.20 (0.18) & 0.20 (0.19) & 0.22 (0.19) & 1.04 \\
Elliptical & 1.12 (1.09) & 1.29 (1.23) & 1.30 (1.23) & 1.14 (1.11) & 1.21 (1.16) & 1.32 \\
Additive smooth & 1.75 (1.69) & 1.99 (1.89) & 1.93 (1.89) & 1.75 (1.69) & 2.01 (1.95) & 2.15 \\
Additive step & 1.45 (1.40) & 1.62 (1.55) & 1.55 (1.55) & 1.45 (1.40) & 1.63 (1.59) & 1.81 \\
Additive linear & 1.55 (1.52) & 1.77 (1.58) & 1.69 (1.58) & 1.56 (1.52) & 1.76 (1.65) & 1.95 \\
Additive hills & 0.85 (0.83) & 1.09 (0.88) & 1.00 (0.88) & 0.85 (0.83) & 1.14 (0.96) & 1.45 \\
\hline
\end{tabular}
\vspace{-0.3cm}
\caption{Median RMSE on the test set with $\kappa=\sigma^2$ (median oracle RMSE).}
\label{tab:mse}
\end{table}
\subsection{Total prediction error}

The prediction error is measured as the root mean squared error (RMSE), see the denominator in Table \ref{tab:rel_eff_def}.  We present the median prediction error across the Monte Carlo runs together with the respective median oracle error in Table \ref{tab:mse} for noise level $\sigma^2$.

The two-step oracle achieves a performance comparable to the pruning oracle while being faster to compute (see Table \ref{tab:times_nodes}).
Hence, the two-step procedure effectively combines the computational efficiency of the global early stopping method with the bottom-up nature of cost-complexity pruning.
For comparison, growing the tree to its full depth yields higher median RMSE values than all the competing algorithms. Thus, we effectively improve the prediction performance by stopping early.

The computational run times are reported in Table \ref{tab:times_nodes}, and it shows that the early stopped regression trees are faster than the pruning by a large magnitude. Further, the number of terminal nodes of the estimators, as well as the respective oracles, are included in brackets Table \ref{tab:times_nodes}. Compared to the respective oracle, the early stopped estimators tend to stop slightly too late for the additive models, indicated by more terminal nodes.

When using the estimated noise level $\hat{\sigma}^2$ as the threshold $\kappa$, we observe a performance decrease of approximately 0–20\% for the early-stopped estimators. The upward bias in the noise level estimation error usually leads to slightly premature stopping.

In addition, we reduce the sample size of Simulation A from $n=1000$ to $n=100$ and show the results in Table \ref{tab:small_sample} in the \hyperref[simulation]{Appendix}. In this small sample setting, the early stopped methods outperform the cost-complexity pruning. Since pruning relies on cross-validation, it lacks observations to find the optimal tree structure. In comparison, the early stopping still works relatively well.
Finally, we run Simulation A with $d=10$ and Simulation B with $d=100$ in Table \ref{tab:high_dim} in the \hyperref[simulation]{Appendix}. The results are similar to the initial simulation setting and, thus, underline the feasibility of early stopping for high dimensions.

Further, the choice of the tree-growing mechanism influences the best achievable performance of the estimator. An analysis of the underlying oracle errors, detailed in Appendix \ref*{SecOrComp}, shows that the semi-global estimator often provides a better oracle, particularly for the rectangular and circular functions.

\begin{table}[tbp]
\spacingset{1.2}
\centering
\footnotesize
\begin{tabular}{lllllll}
\hline
 & Pruning & Global & Two-Step & Semi & Deep \\ \hline
Rectangular & 12.41s (6/6) & 0.53s (14/4) & 0.59s (5/7) & 0.54s (8.5/4) & 1.83s \\ 
Circular & 12.32s (5/5) & 0.53s (15/15) & 0.60s (5/6) & 0.45s (7.5/4) & 1.85s \\ 
Sine cosine & 13.32s (3/3) & 0.30s (4/4) & 0.35s (3/4) & 0.41s (3/3) & 1.60s \\ 
Elliptical & 16.30s (81/76) & 1.15s (305/536) & 2.85s (71/83) & 1.32s (105/143) & 1.66s \\ 
Additive smooth & 41.79s (26/27) & 6.32s (281.5/62.5) & 14.27s (26/33) & 7.39s (155/107) & 9.29s \\ 
Additive step & 39.09s (38/39) & 5.73s (202/64) & 12.59s (37/46) & 7.11s (118/88) & 8.46s \\ 
Additive linear & 38.96s (25/25) & 5.63s (211/32) & 12.68s (24/33) & 6.35s (134/48) & 8.83s \\ 
Additive hills & 34.30s (9/9) & 5.18s (96/16) & 7.74s (9/12) & 5.11s (115/4) & 9.12s \\ 
\hline
\end{tabular}
\vspace{-0.3cm}
\caption{Median run times in seconds (with median number of terminal nodes/median oracle number of terminal nodes in brackets). The deep tree has $n=1000$ terminal nodes.}
\label{tab:times_nodes}
\end{table}

\section{Conclusion}
\label{sec:discussion}
This paper introduces a novel, data-driven stopping rule for regression trees based on the discrepancy principle, offering a computationally efficient alternative to existing post-pruning and pre-pruning methods. Clear theoretical guarantees in terms of oracle inequalities are provided to understand how far early stopping is away from the oracle estimator along the tree growth.  The generalized projection flow framework further unifies and widens the scope of the proposed early stopping methodology. The oracle errors along breadth-first (global), best-first (semi-global) and pruning-type tree building are still the subject of intense current research, depending on specific splitting criteria and function classes. Also given our comparison of the oracle errors among the different methods, progress in the error analysis is highly desirable and would immediately give rise to complete risk bounds for early stopping.  First empirical results are quite convincing, and given the simplicity of the early stopping methods, wider practical experience is clearly within reach.

\section*{Acknowledgements}
This research has been partially funded by the Deutsche Forschungsgemeinschaft (DFG) – Project-ID 318763901 - SFB1294, Project-ID 460867398 - Research Unit 5381 and the German Academic Scholarship Foundation.

\clearpage
\centerline{\textbf{SUPPLEMENTAL: APPENDICES}}
  \medskip 
\bigskip
\bigskip

\spacingset{1.5} 

This supplementary document is organized in the following way. In Section \ref{proofs} we provide proofs of Lemma \ref{LemRiskBound} in Section \ref{proof:LemRiskBound} and Lemma \ref{LemRt2} in Section \ref{proof:LemRt2}. An additional result is given in Lemma \ref{app:LemOracle}. Further, we give proofs of Proposition \ref{PropDistanceOracle} in \ref{app:PropDistanceOracle}, Theorem \ref{ThmOI1} in \ref{app:ThmOI1}, Lemma \ref{LemIntPol} in \ref{app:LemIntPol}, Proposition \ref{PropCTIndep} in \ref{proof:ct_ind}, Proposition \ref{PropCTGeneral} in \ref{proof:ct_dep}, and Theorem \ref{ThmSGind} in \ref{app:ThmSGind}, Theorem \ref{ThmGind} in \ref{app:ThmGind} and Theorem \ref{ThmGdep} in \ref{app:ThmGdep}.

In Section \ref{simulation} we give further details on the simulation and include supplementary tables and figures of the simulation.


\appendix
\renewcommand\theequation{S\arabic{equation}}

\section{Proofs and supporting results}
\label{proofs}

\subsection{Proof of Lemma \ref{LemRiskBound} (error decomposition)}
\label{proof:LemRiskBound}
\begin{proof}
By insertion of $Y=f+\eps$ we obtain
\[ \norm{\hat F_t-f}_n^2=\norm{ \Pi_t(Y-f)-(\Id-\Pi_t)f}_n^2=\norm{\Pi_t\eps}_n^2+\norm{(\Id-\Pi_t)f}_n^2 -2\scapro{\Pi_t\eps}{(\Id-\Pi_t)f}_n.
\]
This is the claimed identity, and the upper bound follows by triangle inequality along the same lines.

The values of approximation and stochastic error at $t\in\{0,n\}$ are obtained from $\Pi_0=0$ and $\Pi_n=\Id$, respectively, while their continuity in $t$ is a consequence of the continuity of $t\mapsto \Pi_t$. The monotonicity of $\norm{\Pi_t\eps}_n^2$ follows from $\Pi_s\preceq\Pi_t$ and their commutativity via
\[ \norm{\Pi_s\eps}_n^2=\scapro{\Pi_s(\Pi_s^{1/2}\eps)}{\Pi_s^{1/2}\eps}_n\le \scapro{\Pi_t(\Pi_s^{1/2}\eps)}{\Pi_s^{1/2}\eps}_n
=\scapro{\Pi_s(\Pi_t^{1/2}\eps)}{\Pi_t^{1/2}\eps}_n\le \norm{\Pi_t\eps}_n^2.
\]
The monotonicity argument for $\norm{(\Id-\Pi_t)f}_n^2$ is completely analogous.
\end{proof}

\subsection{Proof of Lemma \ref{LemRt2} (non-increasing residuals)}
\label{proof:LemRt2}

\begin{proof}
We calculate
\begin{align*}
\norm{\hat F_t-\hat F_s}_n^2 &= \norm{(\Id-\Pi_s)Y-(\Id-\Pi_t)Y}_n^2\\
&= R_s^2+R_t^2-2\scapro{(\Id-\Pi_s)Y}{(\Id-\Pi_t)Y}_n\\
&= R_s^2-R_t^2-2\scapro{(\Pi_t-\Pi_s)Y}{(\Id-\Pi_t)Y}_n.
\end{align*}
Since $\Pi_t$ and $\Pi_s$ commute and $\Pi_s\preceq\Pi_t$ holds, the operators $\Pi_t-\Pi_s$, $\Id-\Pi_t$ are commuting, positive semi-definite and we obtain
\[ \scapro{(\Pi_t-\Pi_s)Y}{(\Id-\Pi_t)Y}_n=\scapro{(\Pi_t-\Pi_s)((\Id-\Pi_t)^{1/2}Y)}{(\Id-\Pi_t)^{1/2}Y}_n
\ge 0.
\]
Hence, $\norm{\hat F_t-\hat F_s}_n^2\le R_s^2-R_t^2$ follows, in particular $R_s^2-R_t^2\ge 0$. The continuity of $t\mapsto R_t^2$ follows from the continuity of $t\mapsto\Pi_tY$.
\end{proof}

\subsection*{Lemma Oracle-type property}
\label{app:LemOracle}

\begin{lemma}\label{LemOracle}
At the balanced oracle, the loss satisfies the oracle-type property
\[ \norm{\hat F_{\tau_b}-f}_n^2\le 2\norm{(\Id-\Pi_{\tau_b})f}_n^2+2\norm{\Pi_{\tau_b}\eps}_n^2\le 4\inf_{t\in[0,n]}\Big(\norm{(\Id-\Pi_t)f}_n^2+\norm{\Pi_t\eps}_n^2\Big).\]
\end{lemma}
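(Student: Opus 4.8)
The plan is to obtain the first inequality directly from the general risk bound of Lemma~\ref{LemRiskBound} and the second from the defining balancing property of $\tau_b$ combined with the monotonicity of the two error curves. For the first inequality there is nothing to do beyond specializing the bound $\norm{\hat F_t-f}_n^2\le 2\norm{(\Id-\Pi_t)f}_n^2+2\norm{\Pi_t\eps}_n^2$ from Lemma~\ref{LemRiskBound} to $t=\tau_b$.

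For the second inequality, the key point is that, by the continuity and monotonicity of the approximation error $t\mapsto\norm{(\Id-\Pi_t)f}_n^2$ (non-increasing) and of the stochastic error $t\mapsto\norm{\Pi_t\eps}_n^2$ (non-decreasing) established in Lemma~\ref{LemRiskBound}, the infimum defining $\tau_b$ is attained and the two errors coincide there, i.e.\ $\norm{(\Id-\Pi_{\tau_b})f}_n=\norm{\Pi_{\tau_b}\eps}_n=:r$, as already noted after Definition~\ref{def:inter}. Consequently the middle expression equals $4r^2$, and it remains to check that $\inf_{t\in[0,n]}\big(\norm{(\Id-\Pi_t)f}_n^2+\norm{\Pi_t\eps}_n^2\big)\ge r^2$. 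I would argue this by a two-case split on $t$ relative to $\tau_b$: if $t\le\tau_b$, the non-increasing approximation error gives $\norm{(\Id-\Pi_t)f}_n^2\ge\norm{(\Id-\Pi_{\tau_b})f}_n^2=r^2$; if $t\ge\tau_b$, the non-decreasing stochastic error gives $\norm{\Pi_t\eps}_n^2\ge\norm{\Pi_{\tau_b}\eps}_n^2=r^2$. In either case the full sum dominates $r^2$; taking the infimum over $t$ and multiplying by $4$ completes the chain of inequalities.

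There is no substantial obstacle here — the whole argument is a short monotonicity observation. The only point deserving a line of care is the degenerate case $(\Id-\Pi_{\tau_b})f=0$ (which forces $r=0$ and, since the approximation error is non-increasing and vanishes at $\tau_b$, in effect corresponds to $f$ being null on the design with $\tau_b=0$); there both inequalities hold trivially with all terms equal to zero, so the convention from Theorem~\ref{ThmOI1} is not even invoked. I would also state explicitly that the cases $t\le\tau_b$ and $t\ge\tau_b$ together exhaust $[0,n]$ and overlap only at $t=\tau_b$, so the case split is legitimate.
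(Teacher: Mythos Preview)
Your proof is correct and follows essentially the same route as the paper: the first inequality is Lemma~\ref{LemRiskBound} specialized to $t=\tau_b$, and the second is the monotonicity of the two error terms together with their equality at $\tau_b$ (the paper phrases your case split via a $\max$/$\min$ chain, but the content is identical). Your parenthetical that the degenerate case $r=0$ forces $\tau_b=0$ and $f\equiv 0$ on the design is not quite accurate in general, but this side remark is harmless since the second inequality then reads $0\le 4\inf_t(\cdots)$, which holds trivially.
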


\begin{proof}
By the monotonicity properties of the error terms we have $\norm{(\Id-\Pi_t)f}_n\ge \norm{(\Id-\Pi_{\tau_b})f}_n$ for $t\le \tau_b$ and $\norm{\Pi_t\eps}_n\ge \norm{\Pi_{\tau_b}\eps}_n$ for $t\ge\tau_b$. Consequently, for any $t\in[0,n]$
\begin{align*}
\norm{(\Id-\Pi_t)f}_n^2+\norm{\Pi_t\eps}_n^2 &\ge \max\Big(\norm{(\Id-\Pi_t)f}_n^2,\norm{\Pi_t\eps}_n^2\Big)\\
&\ge \min\Big(\norm{(\Id-\Pi_{\tau_b})f}_n^2,\norm{\Pi_{\tau_b}\eps}_n^2\Big)\\
&=\frac12 \Big(\norm{(\Id-\Pi_{\tau_b})f}_n^2+\norm{\Pi_{\tau_b}\eps}_n^2\Big)
\end{align*}
holds. By Lemma \ref{LemRiskBound}, the last line is larger than $\frac14 \norm{\hat F_{\tau_b}-f}_n^2$, and the result follows by taking the infimum over all $t\in[0,n]$.
\end{proof}

\subsection{Proof of Proposition \ref{PropDistanceOracle} (distance to balanced oracle)}
\label{app:PropDistanceOracle}
\begin{proof}
For $\kappa\in[0,\norm{Y}_n^2]$ we have $R_\tau^2=\kappa$ and we obtain by Lemma \ref{LemRt2}
\begin{align*}
\norm{\hat F_\tau-\hat F_{\tau_b}}_n^2 &\le \abs{R_\tau^2-R_{\tau_b}^2}\\
&=\babs{\kappa-\norm{(\Id-\Pi_{\tau_b})(f+\eps)}_n^2}\\
&=\babs{\kappa-\norm{(\Id-\Pi_{\tau_b})f}_n^2-\norm{(\Id-\Pi_{\tau_b})\eps}_n^2
-2\scapro{(\Id-\Pi_{\tau_b})f}{(\Id-\Pi_{\tau_b})\eps}_n}\\
&=\babs{\kappa-\norm{\eps}_n^2+2\scapro{(\Pi_{\tau_b}-\Pi_{\tau_b}^2)\eps}{\eps}_n-2\scapro{(\Id-\Pi_{\tau_b})^2f}{\eps}_n},
\end{align*}
where $\norm{(\Id-\Pi_{\tau_b})f}_n^2=\norm{\Pi_{\tau_b}\eps}_n^2$ for the balanced oracle entered in the last line.  For $\kappa>\norm{Y}_n^2$ we stop at $\tau=0$ such that
\[0\le R_\tau^2-R_{\tau_b}^2=\norm{Y}_n^2-R_{\tau_b}^2<\kappa-R_{\tau_b}^2\]
holds. Hence, in this case, the first identity in the equation array above becomes an inequality, and the same upper bound holds. It remains to apply the triangle inequality.
\end{proof}

\subsection{Proof of Theorem \ref{ThmOI1} (general oracle inequality)}
\label{app:ThmOI1}
\begin{proof}
Since all $\Pi_t$ commute, there is one orthogonal matrix $O$, only depending on the projection flow, such that
\begin{equation}\label{EqDiag}
D_t:=O \Pi_t O^\top\text{ is diagonal  for all $t$.}
\end{equation}
For the interpolation term, we introduce $\tilde\eps=O\eps$ and obtain by diagonalization
\begin{align*} 
\scapro{(\Pi_{\tau_b}-\Pi_{\tau_b}^2)\eps}{\eps}_n&=\scapro{(D_{\tau_b}-D_{\tau_b}^2)\tilde\eps}{\tilde\eps}_n
=\frac1n\sum_{i=1}^n (D_{\tau_b}-D_{\tau_b}^2)_{ii}\tilde\eps_i^2\\
&\le \trace(D_{\tau_b}-D_{\tau_b}^2)n^{-1}\max_{i=1,\ldots,n}\tilde\eps_i^2.
\end{align*}
The Legendre transformation  of $g(v)=e^v$ is $g^\ast(u)=u\log(u)-u$, that is $uv\le e^v+u\log(u)-u$ holds for $u,v\ge 0$. For nonnegative random variables $U,V$ with $\E[U\log(U)]<\infty$ and $\E[e^{\alpha V}]<\infty$ for some $\alpha>0$ we insert $u=\frac{U}{\E[U]}$, $v=\alpha V-\log(\E[e^{\alpha V}])$ and take expectations to infer
\[ \alpha\E[UV]\le \log(\E[e^{\alpha V}])\E[U]+\E[U\log(U)]-\log(\E[U])\E[U].\]
By the $\bar\sigma$-subgaussianity of $\eps$ also $\tilde\eps=O\eps$ is $\bar\sigma$-subgaussian and we deduce that \[\E\Big[\exp\Big(\alpha \max_{1\le i\le n}\tilde\eps_i^2/\bar\sigma^2\Big)\Big]\le \sum_{i=1}^n\E\big[\exp\big(\alpha \tilde\eps_i^2/\bar\sigma^2\big)\big]\le 2n\text{ for }\alpha=1/8,
\]
by \citet{boucheron2013concentration}, Eq. (2.4), and thus
\[ \tfrac18\E\Big[U\max_{i=1,\ldots,n}\tilde\eps_i^2\bar\sigma^{-2}\Big]\le \log(2n)\E[U]+\E[U\log(U)]-\log(\E[U])\E[U].\]
For $U=\trace(D_{\tau_b}-D_{\tau_b}^2)=\trace(\Pi_{\tau_b}-\Pi_{\tau_b}^2)$, we always have $U\log(U)\le\log(n)U$ whence
\begin{align*} 
\E[\scapro{(\Pi_{\tau_b}-\Pi_{\tau_b}^2)\eps}{\eps}_n] &\le \E\Big[n^{-1}\trace(\Pi_{\tau_b}-\Pi_{\tau_b}^2)
\max_{i=1,\ldots,n}\tilde\eps_i^2\Big]\\
&\le 16\bar\sigma^2\log(2n) n^{-1}\E[\trace(\Pi_{\tau_b}-\Pi_{\tau_b}^2)]
\end{align*}
and we arrive at \eqref{EqOI1a}.

To establish \eqref{EqOI1b},  we bound for any $\delta>0$, using $2AB\le\delta A^2+\delta^{-1}B^2$,
\[ 2\E\big[\abs{\scapro{(\Id-\Pi_{\tau_b})^2f}{\eps}_n}\big]\le \delta \E\big[\norm{(\Id-\Pi_{\tau_b})f}_n^2\big]
+\delta^{-1}\E\big[\scapro{\tfrac{(\Id-\Pi_{\tau_b})^2f}{\norm{(\Id-\Pi_{\tau_b})f}_n}}{\eps}_n^2\big],
\]
which holds in particular also under convention \eqref{EqOI1Conv}.
Therefore, we find by triangle inequality and Lemma \ref*{LemOracle}
\begin{align*}
\E[\norm{\hat F_\tau-f}_n^2] &\le 2\E[\norm{\hat F_{\tau_b}-f}_n^2]+2\E[\norm{\hat F_\tau-\hat F_{\tau_b}}_n^2]\\
&\le (4+2\delta)\E[\norm{(\Id-\Pi_{\tau_b})f}_n^2]+4\E[\norm{\Pi_{\tau_b}\eps}_n^2]+2\E[\abs{\kappa-\norm{\eps}_n^2}]
\\
&\quad + 32\bar\sigma^2\frac{\log(2n)}{n}\E[\trace(\Pi_{\tau_b}-\Pi_{\tau_b}^2)] +2\delta^{-1}\E\big[\scapro{\tfrac{(\Id-\Pi_{\tau_b})^2f}{\norm{(\Id-\Pi_{\tau_b})f}_n}}{\eps}_n^2\big].\\
&\le (8+2\delta)\E\Big[\inf_{t\in[0,n]}\Big(\norm{(\Id-\Pi_{t})f}_n^2+\norm{\Pi_{t}\eps}_n^2\Big)\Big]
+2\E[\abs{\kappa-\norm{\eps}_n^2}]\\
&\quad + 32\bar\sigma^2\frac{\log(2n)}{n}\E[\trace(\Pi_{\tau_b}-\Pi_{\tau_b}^2)] +2\delta^{-1}\E\big[\scapro{\tfrac{(\Id-\Pi_{\tau_b})^2f}{\norm{(\Id-\Pi_{\tau_b})f}_n}}{\eps}_n^2\big].
\end{align*}
Inequality \eqref{EqOI1b} follows with the choice $\delta=1/2$.
\end{proof}

\subsection{Proof of Lemma \ref{LemIntPol} (interpolation error bounds)}
\label{app:LemIntPol}

\begin{proof}
In the semi-global setting of Example \ref{Ex1}, it suffices to note that
\begin{equation}\label{EqIntAlgebra} \Pi_t-\Pi_t^2=((1-\alpha)\Pi_k+\alpha \Pi_{k+1})-((1-\alpha^2)\Pi_k+\alpha^2\Pi_{k+1})=\alpha(1-\alpha)(\Pi_{k+1}-\Pi_k),
\end{equation}
implying $\sup_t\trace(\Pi_t-\Pi_t^2)\le \frac14$.
In the global case, we have
$\Pi_t-\Pi_t^2=\alpha(1-\alpha)(\Pi_{k_{g(t)+1}}- \Pi_{k_{g(t)}})$ and $k_{g(t)+1}\le 2k_{g(t)}\vee 1$.
This gives
\begin{equation*}
    \trace(\Pi_t-\Pi_t^2)=\alpha(1-\alpha)(k_{g(t)+1}-k_{g(t)})\le (k_{g(t)}\vee 1)/4\le (\trace(\Pi_t^2)\vee 1)/4.
\end{equation*}
With $\norm{\Pi_t}_{HS}^2=\trace(\Pi_t^2)$ the result follows.
\end{proof}

\subsection{Proof of Proposition \ref{PropCTIndep} (cross-term, independent case)}
\label{proof:ct_ind}

\begin{proof}
By definition, $\tau_b$ only depends on $(\norm{\Pi_t\eps}_n^2)_{t\in[0,n]}$. In view of the diagonalisation $\Pi_t=O^\top D_t O$ from \eqref{EqDiag}, $\norm{\Pi_t\eps}_n^2=\norm{D_tO\eps}_n^2$ only depends on $(\tilde\eps_i^2)_{i=1,\ldots,n}$ with $\tilde\eps:=O\eps\sim N(0,\sigma^2\Id)$ and on the design $(X_i)$, treating $\Pi_t$ and thus $D_t,O$ as fixed by independence. This shows for the conditional expectation
\begin{align*}
\E\big[\scapro{(\Id-\Pi_{\tau_b})^2f}{\eps}_n^2\,&\big|\,(X_i,\tilde\eps_i^2)_{i=1,\ldots,n}\big] = \E\big[\scapro{(\Id-D_{\tau_b})^2Of}{\tilde\eps}_n^2\,\big|\,(X_i,\tilde\eps_i^2)_{i=1,\ldots,n}\big]\\
&= \E\Big[\Big(\frac1{n}\sum_{i=1}^n ((\Id-D_{\tau_b})^2Of)(X_i)\tilde\eps_i\Big)^2\,\Big|\,(X_i,\tilde\eps_i^2)_{i=1,\ldots,n}\Big]\\
&= \frac1{n^2}\sum_{i=1}^n ((\Id-D_{\tau_b})^2Of)(X_i)^2\tilde\eps_i^2.\\
&\le \norm{(\Id-D_{\tau_b})^2Of}_n^2n^{-1}\max_{i=1,\ldots,n}\tilde\eps_i^2,
\end{align*}
where we used $\E[\tilde\eps_i\tilde\eps_j\,|\,(X_i,\tilde\eps_i^2)_{i=1,\ldots,n}]=0$ for $i\not=j$ due to $\tilde\eps\sim N(0,\sigma^2\Id)$ conditional on the design. Noting $\norm{(\Id-D_{\tau_b})^2Of}_n^2=\norm{(\Id-\Pi_{\tau_b})^2f}_n^2$, this shows for the full expectation
\[ \E\Big[\scapro{\tfrac{(\Id-\Pi_{\tau_b})^2f}{\norm{(\Id-\Pi_{\tau_b})f}_n}}{\eps}_n^2\Big]\le n^{-1}\E\Big[\tfrac{\norm{(\Id-\Pi_{\tau_b})^2f}_n^2}{\norm{(\Id-\Pi_{\tau_b})f}_n^2} \max_{i=1,\ldots,n}\tilde\eps_i^2\Big]\le n^{-1}\E\Big[ \max_{i=1,\ldots,n}\tilde\eps_i^2\Big].
\]
By a standard Gaussian maximal inequality \citep{tsybakov2009nonparametric}, Corollary 1.3, we have $\E[ \max_{i=1,\ldots,n}\tilde\eps_i^2]\le 4\sigma^2\log(\sqrt2 n)$ and the result follows.
\end{proof}

\subsection{Proof of Proposition \ref{PropCTGeneral} (cross-term, dependent case)}
\label{proof:ct_dep}

\begin{proof}
The proof relies on a deviation inequality for maxima of normalized quadratic functionals and a complexity bound for the number of possible projections until time $t$. 

By the linear interpolation property $\Pi_{\tau_b}=\alpha\Pi_{k_{g(\tau_b)}}+(1-\alpha)\Pi_{k_{g(\tau_b)+1}}$ holds, where $\tau_b=(1-\alpha)k_{g(\tau_b)}+\alpha k_{g(\tau_b)+1}$ and $\alpha\in[0,1)$, see Example \ref{Ex1}. Simple algebra shows
\[ (\Id-\Pi_{\tau_b})^2=(1-\alpha)^2(\Id-\Pi_{k_{g(\tau_b)}})+(1-(1-\alpha)^2)(\Id-\Pi_{k_{g(\tau_b)+1}}).\]
This yields
\begin{align*}
\abs{\scapro{(\Id-\Pi_{\tau_b})^2f}{\eps}_n} &\le (1-\alpha)^2\norm{(\Id-\Pi_{k_{g(\tau_b)}})f}_n\babs{\scapro{\tfrac{(\Id-\Pi_{k_{g(\tau_b)}})f}
{\norm{(\Id-\Pi_{k_{g(\tau_b)}})f}_n}}{\eps}_n}\\
&\quad +
(1-(1-\alpha)^2)\norm{(\Id-\Pi_{k_{g(\tau_b)+1}})f}_n
\babs{\scapro{\tfrac{(\Id-\Pi_{k_{g(\tau_b)+1}})f}{\norm{(\Id-\Pi_{k_{g(\tau_b)+1}})f}_n}}{\eps}_n}
\end{align*}
as well as
\begin{align*}
\norm{(\Id-\Pi_t)f}_n&=\scapro{(\Id-\Pi_t)^2f}{f}_n^{1/2}\\
&=\Big((1-\alpha)^2\norm{(\Id-\Pi_{k_{g(\tau_b)}})f}_n^2
+(1-(1-\alpha)^2)\norm{(\Id-\Pi_{k_{g(\tau_b)+1}})f}_n^2\Big)^{1/2}\\
&\ge (1-\alpha)^2\norm{(\Id-\Pi_{k_{g(\tau_b)}})f}_n
+(1-(1-\alpha)^2)\norm{(\Id-\Pi_{k_{g(\tau_b)+1}})f}_n,
\end{align*}
where Jensen's inequality was used in the last step. We deduce
\[ \scapro{\tfrac{(\Id-\Pi_{\tau_b})^2f}{\norm{(\Id-\Pi_{\tau_b})f}_n}}{\eps}_n^2\le \max\Big(\scapro{\tfrac{(\Id-\Pi_{k_{g(\tau_b)}})f}{\norm{(\Id-\Pi_{k_{g(\tau_b)}})f}_n}}{\eps}_n^2,
\scapro{\tfrac{(\Id-\Pi_{k_{g(\tau_b)+1}})f}{\norm{(\Id-\Pi_{k_{g(\tau_b)+1}})f}_n}}{\eps}_n^2\Big).
\]

Since $\Pi_{\tau_b}$ and $\eps$ have an almost arbitrary dependence structure, we bound the expectation by the maximum over all possible splits leading to $\Pi_{\tau_b}$:
\[ \E\Big[\scapro{\tfrac{(\Id-\Pi_{\tau_b})^2f}{\norm{(\Id-\Pi_{\tau_b})f}_n}}{\eps}_n^2\Big]\le \E\Big[\max_{\trace(\Pi)\in\{k_{g(\tau_b)},k_{g(\tau_b)+1}\}}\scapro{\tfrac{(\Id-\Pi)f}{\norm{(\Id-\Pi)f}_n}}{\eps}_n^2\Big],
\]
where the maximum is taken over all deterministic projections $\Pi$ reachable by data-driven $k_{g(\tau_b)}$-fold or $k_{g(\tau_b)+1}$-fold global and semi-global splitting, respectively.

We must bound the expectation of this maximum. Since $v:=\tfrac{(\Id-\Pi)f}{\norm{(\Id-\Pi)f}_n}$ is a unit vector, we get without the maximum  $\E[ \scapro{v}{\eps}_n^2]=\sigma^2 n^{-1}$. For $M_g:=\#\{\Pi: \trace(\Pi)=k_g\}$, the cardinality of the maximisation set at a deterministic generation $g$, we consider
$\max_{m=1,\ldots,M_g}\scapro{v_m}{\eps}_n^2$ generally for any deterministic unit vectors $v_m$.
By $\bar\sigma$-subgaussianity  we have
\[ \PP\Big(\max_{m=1,\ldots,M_g}\scapro{v_m}{\eps}_n^2> \tfrac{2\bar\sigma^2}{n}(\log(M_g)+u)\Big)\le e^{-u},\quad u\ge 0.\]
Let us determine an upper bound for $\log(M_g)$. Projections $\Pi$ with $\trace(\Pi)=k_g$ are obtained after $k_g$ splits.
At each split, we have at most $d(n-1)$ different thresholds for splitting in each of the $d$ directions and at all interstices between the $n$ data points. Therefore $M_g\le (dn)^{k_g}$, $\log(M_g)\le k_g\log(dn)$ holds, which does not seem a too rough upper bound at the logarithmic level.  We conclude
\[ \PP\Big(\max_{\trace(\Pi)=k_g} \frac{n\scapro{(\Id-\Pi)^2f}{\eps}_n^2}{2\bar\sigma^2\norm{(\Id-\Pi)^2f}_n^2}-k_g\log(dn)> u\Big)
\le e^{-u},\quad u\ge 0.
\]
Bonferroni's inequality and $\sum_{g=0}^{G-1}e^{-(u+\log(G))}=e^{-u}$ yield
\[ \PP\Big(\max_{g=0,\ldots,G-1}\Big(\max_{\trace(\Pi)=k_g} \frac{n\scapro{(\Id-\Pi)^2f}{\eps}_n^2}{2\bar\sigma^2\norm{(\Id-\Pi)^2f}_n^2}-k_g\log(dn)\Big)-\log(G)> u\Big)
\le e^{-u}.
\]
Integrating this bound over $u\in[0,\infty)$ gives
\[ \E\Big[\max_{g=0,\ldots,G}\max_{\trace(\Pi)=k_g} \Big( \frac{n\scapro{(\Id-\Pi)^2f}{\eps}_n^2}{2\bar\sigma^2\norm{(\Id-\Pi)^2f}_n^2}-k_g\log(dn)\Big)\Big]
\le \log(G)+1=\log(eG),
\]
noting $(\Id-\Pi)f=0$ for $\trace(\Pi)=k_G=n$, i.e. $\Pi=\Id$, such that the maximum trivially extends to $g=G$.
The uniformity over $g$ then shows
\[ \E\Big[\max_{\trace(\Pi)\in\{k_{g(\tau_b)},k_{g(\tau_b)+1}\}} \frac{\scapro{(\Id-\Pi)^2f}{\eps}_n^2}{\norm{(\Id-\Pi)^2f}_n^2}\Big]\le \frac{2\bar\sigma^2}{n}\Big(\E[k_{g(\tau_b)+1}]\log(dn)+\log(eG)\Big).
\]
For both semi-global and global early stopping, we have $G\le n$ and $k_{g(\tau_b)+1}\le 2\tau_b\vee 1$ such that
the result follows by simplifying the numerical constants.
\end{proof}

\par\medskip 
\noindent\textbf{Remark.} 
\textit{The maximal deviation inequality in our argument does not profit from any correlation between $\scapro{(\Id-\Pi)^2f}{\eps}_n$ for different projections $\Pi$. The problem is that for $\Pi,\Pi'$, where $\norm{(\Id-\Pi)^2f-(\Id-\Pi')^2f}_n$ is small, the intrinsic normalised distance $\norm{\frac{(\Id-\Pi)^2f}{\norm{(\Id-\Pi)^2f}_n}-\frac{(\Id-\Pi')^2f}{\norm{(\Id-\Pi')^2f}_n}}_n$ might still be large because the approximation-type error $\norm{(\Id-\Pi)^2f}_n$ in the denominator is typically small.}\\
\textit{
Only in the case $d=1$, a partial summation (or integration by parts) argument for bounded variation functions $f$ yields a useful cross-term bound of order $\bar\sigma^2 n^{-1/2}$. This improves the bound of Proposition \ref{PropCTGeneral} for all $\tau_b\ge n^{1/2}$ and is not larger than the typical order of the early stopping error. Unfortunately, the novel approach by \citet{klusowski2023large} for analyzing the impurity gain in additive models of bounded variation seems to give only suboptimal bounds for our cross-term. So, improvements in the case $d\ge 2$ of interest remain open.}
\par\medskip 

\subsection{Proof of Theorem \ref{ThmSGind} (semi-global early stopping, independent splitting)}
\label{app:ThmSGind}

\begin{proof}
Insert into the oracle inequality of Theorem \ref{ThmOI1} the semi-global interpolation error bound of Lemma \ref{LemIntPol} and the cross-term bound of Proposition \ref{PropCTIndep} and note
\begin{align*} 
\E\Big[\inf_{t\in[0,n]}\Big(\norm{(\Id-\Pi_{t})f}_n^2+\norm{\Pi_{t}\eps}_n^2\Big)\Big]&\le \inf_{t\in[0,n]}\E\Big[\norm{(\Id-\Pi_{t})f}_n^2+\norm{\Pi_{t}\eps}_n^2\Big]\\
&=\inf_{t\in[0,n]}\E[\norm{\hat F_t-f}_n^2],
\end{align*}
by the independence of $\Pi_t$ from noise and design.
\end{proof}

\subsection{Proof of Theorem \ref{ThmGind} (global early stopping, independent splitting)}
\label{app:ThmGind}

\begin{proof}
The proof is as for the semi-global case in Theorem \ref{ThmSGind}, but using the global interpolation error bound of Lemma \ref{LemIntPol}.
\end{proof}

\subsection{Proof of Theorem \ref{ThmGdep} (early stopping, dependent splitting)}
\label{app:ThmGdep}

\begin{proof}
Insert into the oracle inequality of Theorem \ref{ThmOI1} the global interpolation error bound of Lemma \ref{LemIntPol}, also valid in the semi-global case, and the cross-term bound of Proposition \ref{PropCTGeneral}. Note
 $\norm{\Pi_{\tau_b}}_{HS}^2\le \tau_b$ and simplify the constants.
\end{proof}

\section{Further simulation details}
\label{simulation}

\subsection{Post-pruning tuning}\label{app:pruning}
For cost-complexity pruning, we first generate the sequence of cost-complexity hyperparameters $(\lambda_h)_{h=1}^{H}$, for $\lambda_h \in \Lambda_{prun}$. We then discard any hyperparameter $\lambda_h$ where $R(T_{\lambda_h}) - R(T_{\lambda_{h-1}}) > 0.01$ for $h=1,\ldots,H$, along with its corresponding subtrees $T_{\lambda_h}$. This additional step avoids a significant increase in computational cost from applying cross-validation across the full sequence $(\lambda_h)_{h=1}^{H}$, without sacrificing performance\footnote{This step is heuristically motivated, since the full path of cost-complexity hyperparameters can potentially include many elements and thus hinder practical feasibility of pruning. In contrast, the early stopping does not face this issue.}. The same approach is applied within the two-step procedure, where pruning is performed on the tree of depth $\hat g + 1$.

\subsection{Robustness}
Figure \ref{fig:additive_dgp} displays the additive functions used in Simulation B. The results for $n=100$ are included in Table \ref{tab:small_sample}. In Table \ref{tab:high_dim} the dimension is increased to $d=10$ for Simulation A and to $d=100$ for Simulation B.

\begin{figure}[htb]
    \centering
    \begin{subfigure}[b]{0.4\textwidth}
        \centering
        \includegraphics[width=\textwidth]{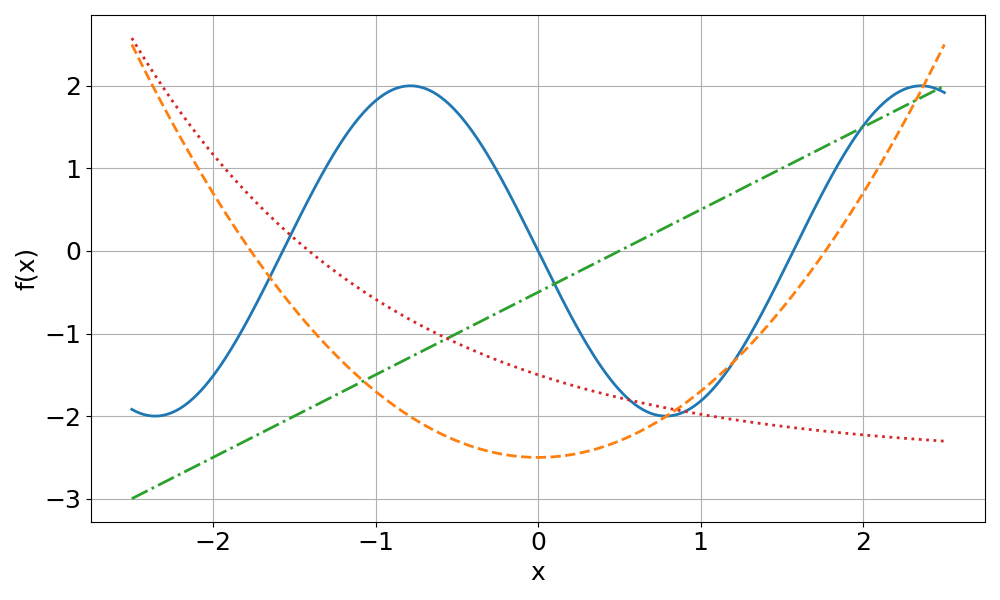}
        \caption{Smooth functions}
    \end{subfigure}
    \hspace{-0.2cm}
    \begin{subfigure}[b]{0.4\textwidth}
        \centering
        \includegraphics[width=\textwidth]{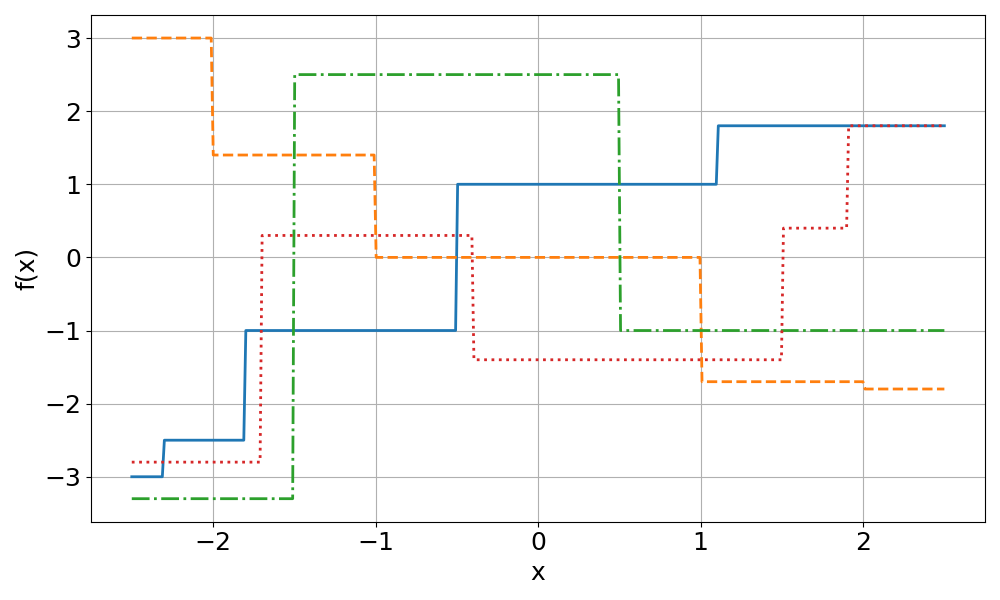}
        \caption{Step functions}
    \end{subfigure}\\
    \vspace{0mm} 
    \begin{subfigure}[b]{0.4\textwidth}
        \centering
        \includegraphics[width=\textwidth]{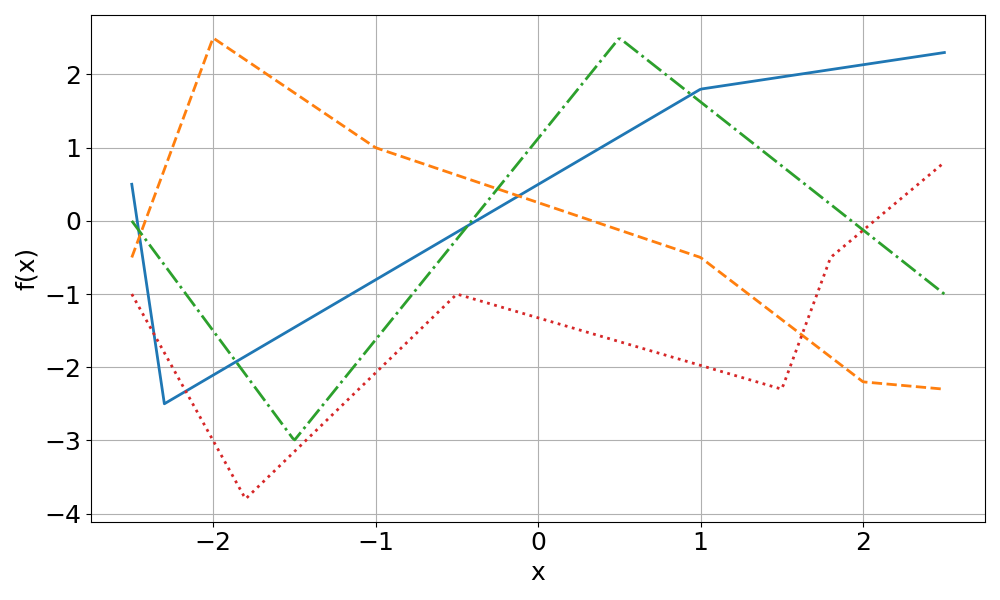}
        \caption{Piecewise linear functions}
    \end{subfigure}
    \hspace{-0.2cm}
    \begin{subfigure}[b]{0.4\textwidth}
        \centering
        \includegraphics[width=\textwidth]{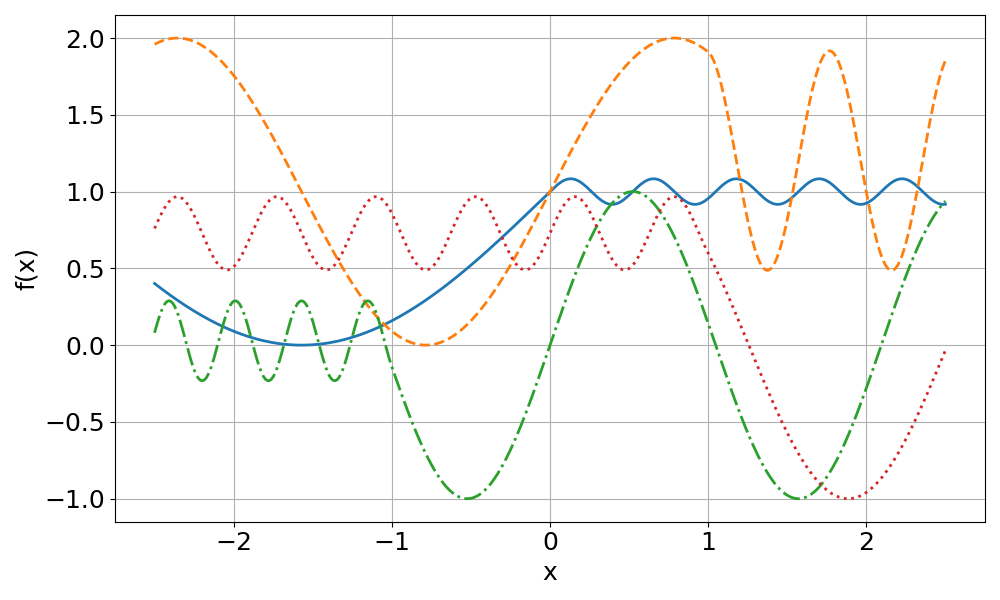}
        \caption{Hills-type functions}
    \end{subfigure}
    \caption{Signals $g_j$, $j=1,\ldots,4$, in the sparse high-dimensional additive model.}
    \label{fig:additive_dgp}
\end{figure}

\begin{table}[htb]
\centering
\footnotesize
\begin{tabular}{lcccccccc}
\hline
& Pruning & Global & Global Int & Two-Step & Semi\\
\hline
Rectangular & 0.57 & 0.48  & 0.38 & 0.47 & 0.47 \\
Circular & 0.58  & 0.57 & 0.47 & 0.53  & 0.55 \\
Sine cosine & 0.44  & 0.34 & 0.29  & 0.37 & 0.37 \\
Elliptical & 2.60  & 2.59 & 2.59 & 2.60 & 2.58  \\
\hline
\end{tabular}
\caption{Median RMSE for $n=100$ and $M=500$ Monte Carlo iterations.}
\label{tab:small_sample}
\end{table}

\begin{table}[htb]
\centering
\footnotesize
\begin{tabular}{lccccc}
\hline
 & Pruning & Global & Global Int & Two-Step & Semi \\ \hline
Rectangular & 0.21 & 0.35 & 0.32 & 0.25 & 0.35 \\
Circular & 0.25 & 0.38 & 0.36 & 0.25 & 0.33 \\
Sine cosine & 0.23 & 0.22 & 0.21 & 0.21 & 0.23 \\
Elliptical & 1.18 & 1.36 & 1.37 & 1.23 & 1.27 \\
Additive smooth & 1.80 & 2.12 & 2.06 & 1.80 & 2.14 \\
Additive step & 1.52 & 1.74 & 1.66 & 1.53 & 1.72 \\
Additive linear & 1.59 & 1.88 & 1.79 & 1.59 & 1.86 \\
Additive hills & 0.86 & 1.12 & 1.04 & 0.87 & 1.18 \\
\hline
\end{tabular}
\caption{Median RMSE on the test set with $\kappa=\sigma^2$ with $d=10$ for Simulation A and $d=100$ for Simulation B.}
\label{tab:high_dim}
\end{table}

\subsection{Oracle comparison}\label{SecOrComp}

\begin{figure}[tbp]
    \centering
   \begin{subfigure}[b]{0.4\textwidth}
        \centering
        \includegraphics[width=\textwidth]{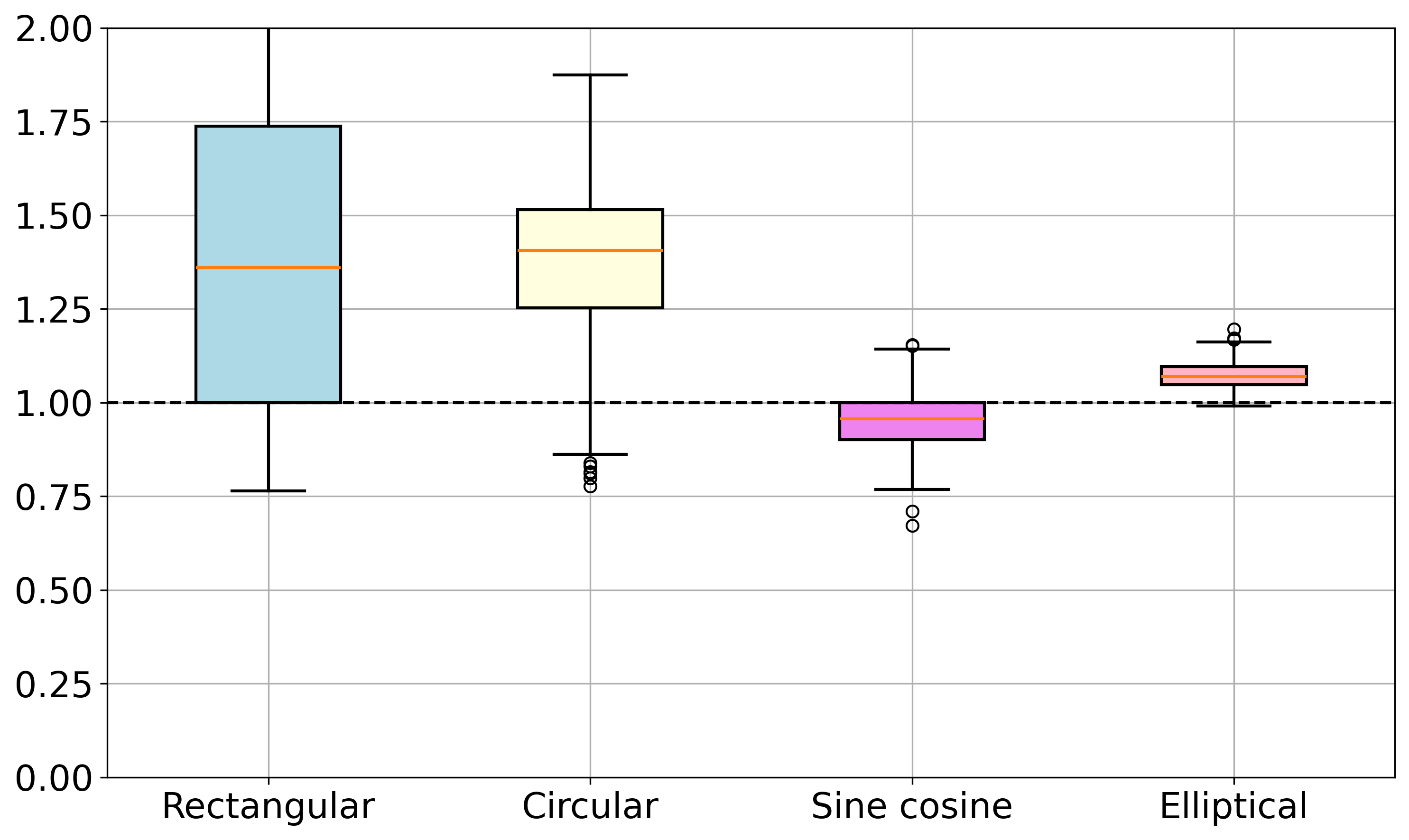}
        \caption{$\rho_{glob,semi}$ for Simulation A}
    \end{subfigure}
    \hspace{-0.2cm}
    \begin{subfigure}[b]{0.4\textwidth}
        \centering
        \includegraphics[width=\textwidth]{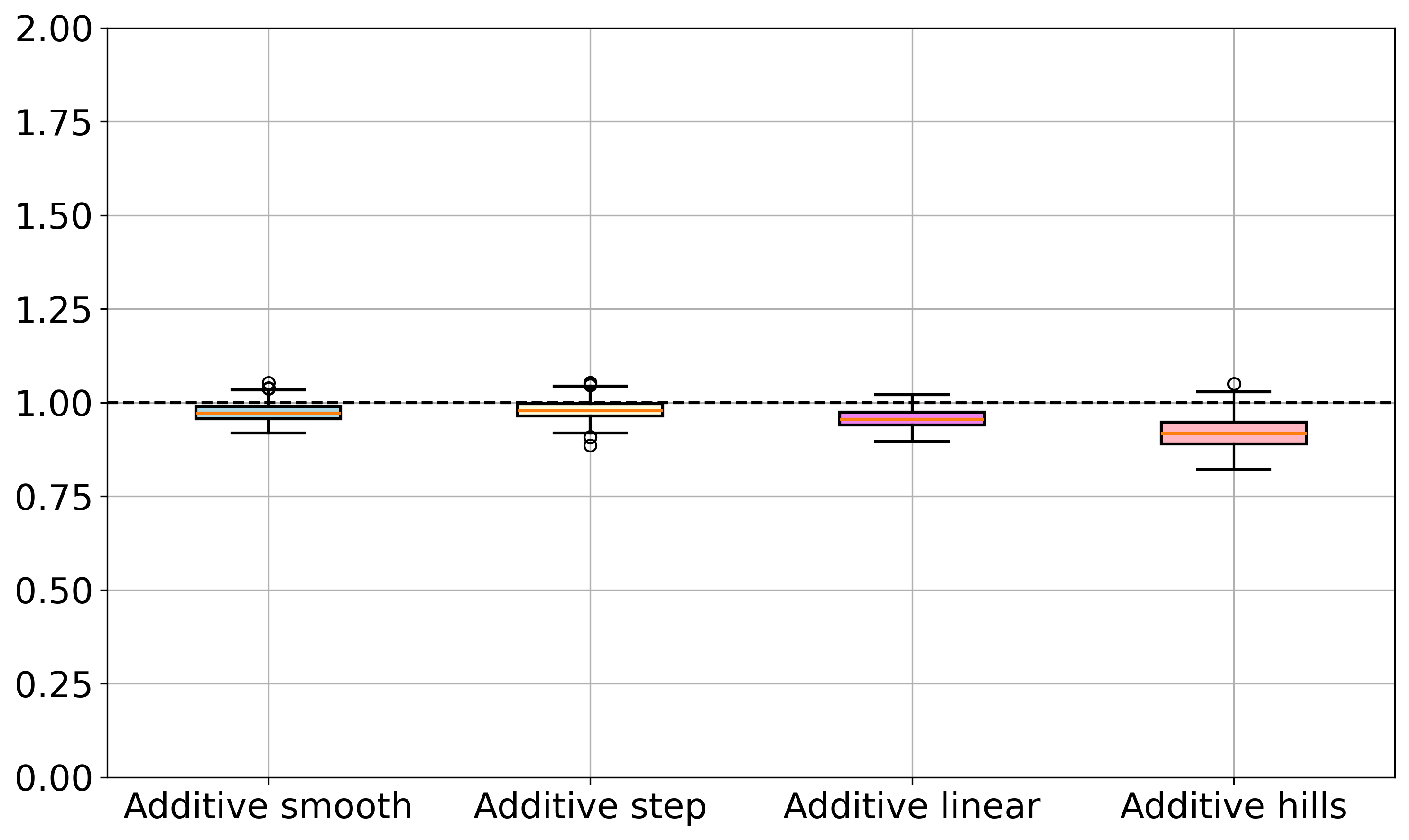}
        \caption{$\rho_{glob,semi}$ for Simulation B}
    \end{subfigure}
    \begin{subfigure}[b]{0.4\textwidth}
        \centering
        \includegraphics[width=\textwidth]{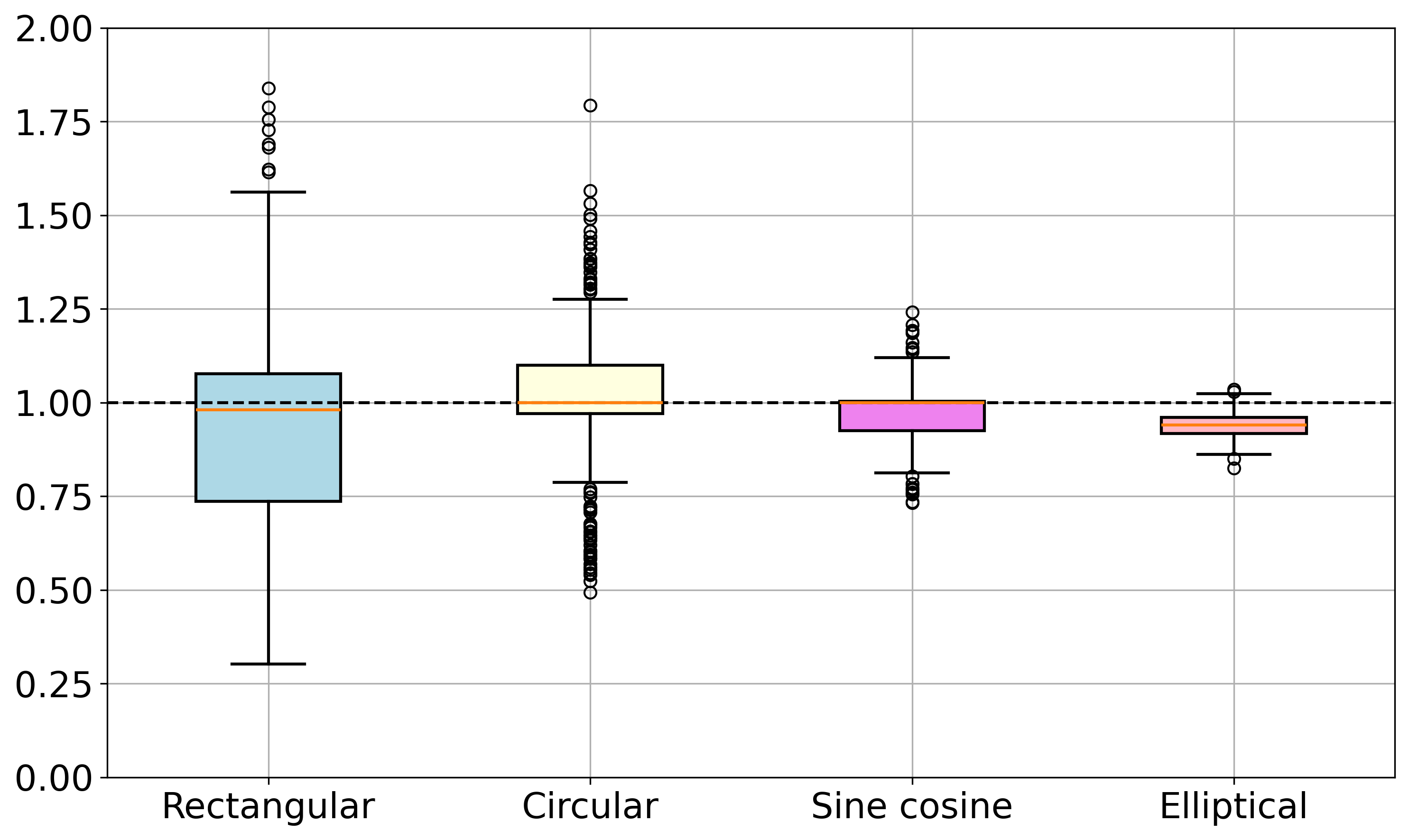}
        \caption{$\rho_{prun,semi}$ for Simulation A}
    \end{subfigure}
    \hspace{-0.2cm}
    \begin{subfigure}[b]{0.4\textwidth}
        \centering
        \includegraphics[width=\textwidth]{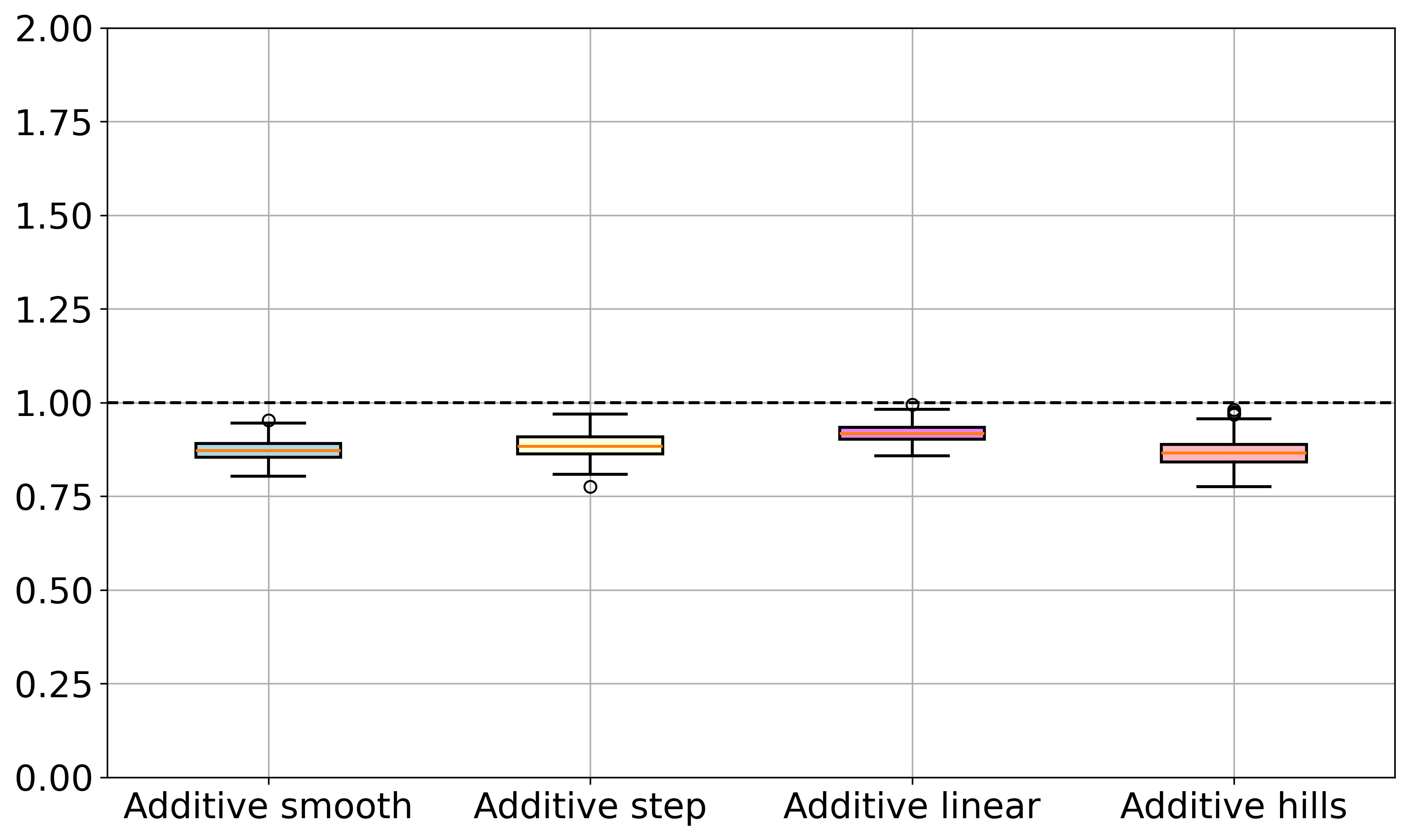}
        \caption{$\rho_{prun,semi}$ for Simulation B}
    \end{subfigure}
    \caption{Different oracle ratios: $\rho_{glob,semi}$ is global/semi-global (values smaller than one are in favor of global);
    $\rho_{prun,semi}$ is pruning/semi-global (values smaller than one are in favor of pruning).}
    \label{fig:ratios}
\end{figure}

We compare  the pruned, global, and semi-global oracles by the following ratios for each Monte Carlo run:
\begin{align*}
\textit{(global to semi-global)}   &\quad \rho_{glob,semi} = \min _{t \in [0, n]}\|\hat{F}_{t} - f \|_{n^{\prime}} / \min _{g \in \{0,\ldots,n \} } \|\hat{F}_{g} - f \|_{n^{\prime}},\\
\textit{(pruning to semi-global)}   &\quad \rho_{prun,semi} = \min _{\lambda \in \Lambda_{prun}}\|\hat{F}_{\lambda} - f \|_{n^{\prime}} / \min _{g \in \{0,\ldots,n \} } \|\hat{F}_{g} - f \|_{n^{\prime}}.
\end{align*}
These oracle ratios are presented as boxplots in Figure \ref{fig:ratios}.
The first row shows that the oracle errors between the semi-global and global stopping methods are often very close.
For the {\it rectangular} and {\it circular} functions, however, the semi-global oracle performs significantly better. This is because the breadth-first algorithm splits all nodes at a given generation, which, in the case of these simple geometric boundaries, leads to several unnecessary splits in regions without signal. The global oracle thus suffers from the strongly varying local regularity, while the semi-global oracle much better adapts locally.

The second row of Figure \ref{fig:ratios} compares the pruning oracle to the semi-global oracle. The performance is quite similar, with slight advantages for pruning. For example, the oracle of the {\it elliptical} example is slightly more accurate for pruning, which is due to the bottom-up nature of the pruning. The first row of Figure \ref{fig:ratios} for the {\it elliptical} example shows that the semi-global oracle performs slightly better than the global oracle while having fewer terminal nodes, compare Table \ref{tab:times_nodes}. 

\bibliographystyle{agsm}
\bibliography{bibliography}

@article{stankewitz2024early,
  title={Early stopping for {L2}-boosting in high-dimensional linear models},
  author={Stankewitz, Bernhard},
  journal={The Annals of Statistics},
  volume={52},
  number={2},
  pages={491--518},
  year={2024},
  publisher={Institute of Mathematical Statistics}
}

@article{hucker2024early,
  title={Early stopping for conjugate gradients in statistical inverse problems},
  author={Hucker, Laura and Rei{\ss}, Markus},
  journal={arXiv preprint arXiv:2406.15001},
  year={2024}
}

@article{chi2022asymptotic,
  title={Asymptotic properties of high-dimensional random forests},
  author={Chi, Chien-Ming and Vossler, Patrick and Fan, Yingying and Lv, Jinchi},
  journal={The Annals of Statistics},
  volume={50},
  number={6},
  pages={3415--3438},
  year={2022},
  publisher={Institute of Mathematical Statistics}
}

@article{biau2012analysis,
  title={Analysis of a random forests model},
  author={Biau, G{\'e}rard},
  journal={Journal of Machine Learning Research},
  volume={13},
  number={1},
  pages={1063--1095},
  year={2012},
  publisher={JMLR. org}
}

@book{goodfellow2016deep,
  title={Deep learning},
  author={Goodfellow, Ian and Bengio, Yoshua and Courville, Aaron and Bengio, Yoshua},
  volume={1},
  year={2016},
  publisher={MIT press Cambridge}
}

@article{gasser1986residual,
  title={Residual variance and residual pattern in nonlinear regression},
  author={Gasser, Theo and Sroka, Lothar and Jennen-Steinmetz, Christine},
  journal={Biometrika},
  volume={73},
  number={3},
  pages={625--633},
  year={1986},
  publisher={Oxford University Press}
}

@article{rice1984bandwidth,
  title={Bandwidth choice for nonparametric regression},
  author={Rice, John},
  journal={The Annals of Statistics},
  pages={1215--1230},
  year={1984},
  publisher={JSTOR}
}

@article{pedregosa2011scikit,
  title={Scikit-learn: Machine learning in Python},
  author={Pedregosa, Fabian and Varoquaux, Ga{\"e}l and Gramfort, Alexandre and Michel, Vincent and Thirion, Bertrand and Grisel, Olivier and Blondel, Mathieu and Prettenhofer, Peter and Weiss, Ron and Dubourg, Vincent and others},
  journal={the Journal of machine Learning research},
  volume={12},
  pages={2825--2830},
  year={2011},
  publisher={JMLR. org}
}

@article{lepski1997optimal,
  title={Optimal spatial adaptation to inhomogeneous smoothness: an approach based on kernel estimates with variable bandwidth selectors},
  author={Lepski, Oleg V and Mammen, Enno and Spokoiny, Vladimir G},
  journal={The Annals of Statistics},
  pages={929--947},
  year={1997},
  publisher={JSTOR}
}

@article{breiman1984classification,
  title={Classification and regression trees},
  author={Breiman, Leo and Friedman, Jerome and Olshen, RA and Stone, Charles J},
  year={1984},
  publisher={Wadsworth},
  journal={Chapman and Hall/CRC}
}

@article{blanchard2018optimal,
  title={Optimal adaptation for early stopping in statistical inverse problems},
  author={Blanchard, Gilles and Hoffmann, Marc and Rei{\ss}, Markus},
  journal={SIAM/ASA Journal on Uncertainty Quantification},
  volume={6},
  number={3},
  pages={1043--1075},
  year={2018},
  publisher={SIAM}
}

@article{ziebell2025earlystopping,
  title={EarlyStopping: Implicit Regularization for Iterative Learning Procedures in Python},
  author={Ziebell, Eric and Miftachov, Ratmir and Stankewitz, Bernhard and Hucker, Laura},
  journal={arXiv preprint arXiv:2503.16753},
  year={2025}
}

@book{tsybakov2009nonparametric,
  title={Introduction to Nonparametric Estimation},
  author={Tsybakov, Alexandre B},
  year={2009},
  publisher={Springer}
}

@article{cattaneo2023inference,
  title={Inference with mondrian random forests},
  author={Cattaneo, Matias D and Klusowski, Jason M and Underwood, William G},
  journal={arXiv preprint arXiv:2310.09702},
  year={2023}
}

@incollection{devroye2013strong,
  title={Strong universal consistent estimate of the minimum mean squared error},
  author={Devroye, Luc and Ferrario, Paola G and Gy{\"o}rfi, L{\'a}szl{\'o} and Walk, Harro},
  booktitle={Empirical Inference: Festschrift in Honor of Vladimir N. Vapnik},
  pages={143--160},
  year={2013},
  publisher={Springer}
}

@article{klusowski2021sharp,
  title={Sharp analysis of a simple model for random forests},
  author={Klusowski, Jason},
  journal={International Conference on Artificial Intelligence and Statistics},
  volume={130},
  pages={757--765},
  year={2021},
  publisher={Proceedings of Machine Learning Research}
}

@article{blanchard2018early,
  title={Early stopping for statistical inverse problems via truncated {SVD} estimation},
  author={Blanchard, Gilles and Hoffmann, Marc and Rei{\ss}, Markus},
  journal={Electronic Journal of Statistics},
  volume={12},
  number={2},
  pages={3204--3231},
  year={2018}
}

@article{zhang2024adaptive,
  title={Adaptive split balancing for optimal random forest},
  author={Zhang, Yuqian and Ji, Weijie and Bradic, Jelena},
  journal={arXiv preprint arXiv:2402.11228},
  year={2024}
}

@article{breiman2004consistency,
  title={Consistency for a simple model of
random forests},
  author={Breiman, Leo},
  journal={Technical Report 670, UC Berkeley},
  year={2004}
}

@article{chaudhuri2023cross,
  title={A cross-validation framework for signal denoising with applications to trend filtering, dyadic CART and beyond},
  author={Chaudhuri, Anamitra and Chatterjee, Sabyasachi},
  journal={The Annals of Statistics},
  volume={51},
  number={4},
  pages={1534--1560},
  year={2023},
  publisher={Institute of Mathematical Statistics}
}

@article{cattaneo2024convergence,
  title={Convergence rates of oblique regression trees for flexible function libraries},
  author={Cattaneo, Matias D and Chandak, Rajita and Klusowski, Jason M},
  journal={The Annals of Statistics},
  volume={52},
  number={2},
  pages={466--490},
  year={2024},
  publisher={Institute of Mathematical Statistics}
}

@article{genuer2012variance,
  title={Variance reduction in purely random forests},
  author={Genuer, Robin},
  journal={Journal of Nonparametric Statistics},
  volume={24},
  number={3},
  pages={543--562},
  year={2012},
  publisher={Taylor \& Francis}
}

@article{arlot2014analysis,
  title={Analysis of purely random forests bias},
  author={Arlot, Sylvain and Genuer, Robin},
  journal={arXiv preprint arXiv:1407.3939},
  year={2014}
}

@article{duroux2018impact,
  title={Impact of subsampling and tree depth on random forests},
  author={Duroux, Roxane and Scornet, Erwan},
  journal={ESAIM: Probability and Statistics},
  volume={22},
  pages={96--128},
  year={2018},
  publisher={EDP Sciences}
}

@article{gey2005model,
  title={Model selection for CART regression trees},
  author={Gey, Servane and Nedelec, Elodie},
  journal={IEEE Transactions on Information Theory},
  volume={51},
  number={2},
  pages={658--670},
  year={2005},
  publisher={IEEE}
}

@book{hastie2017elements,
  title={The elements of statistical learning: data mining, inference, and prediction},
  author={Hastie, Trevor and Tibshirani, Robert and Friedman, Jerome},
  year={2017},
  publisher={Springer}
}

@article{haris2022generalized,
  title={Generalized sparse additive models},
  author={Haris, Asad and Simon, Noah and Shojaie, Ali},
  journal={Journal of Machine Learning Research},
  volume={23},
  number={70},
  pages={1--56},
  year={2022}
}

@article{klusowski2023large,
  title={Large scale prediction with decision trees},
  author={Klusowski, Jason M and Tian, Peter M},
  journal={Journal of the American Statistical Association},
  volume={119},
  number={545},
  pages={525--537},
  year={2024},
  publisher={Taylor \& Francis}
}

@article{scornet2015consistency,
  title={Consistency of random forests},
  author={Scornet, Erwan and Biau, G{\'e}rard and Vert, Jean-Philippe},
  journal={The Annals of Statistics},
  volume={43},
  number={4},
  pages={1716--1741},
  year={2015}
}

@article{wager2018estimation,
  title={Estimation and inference of heterogeneous treatment effects using random forests},
  author={Wager, Stefan and Athey, Susan},
  journal={Journal of the American Statistical Association},
  volume={113},
  number={523},
  pages={1228--1242},
  year={2018},
  publisher={Taylor \& Francis}
}

@article{athey2019generalized,
  title={Generalized random forests},
  author={Athey, Susan and Tibshirani, Julie and Wager, Stefan},
  journal={The Annals of Statistics},
  volume={47},
  number={2},
  pages={1148--1178},
  year={2019},
  publisher={Institute of Mathematical Statistics}
}

@article{boucheron2013concentration,
  title={Concentration inequalities: A nonasymptotic theory of independence},
  author={Boucheron, St{\'e}phane and Lugosi, G{\'a}bor and Massart, Pascal},
  year={2013},
  publisher={OUP: Oxford},
  journal= {Oxford University Press}
}

@article{blanchard2012discrepancy,
  title={Discrepancy principle for statistical inverse problems with application to conjugate gradient iteration},
  author={Blanchard, Gilles and Math{\'e}, Peter},
  journal={Inverse Problems},
  volume={28},
  number={11},
  pages={115011},
  year={2012},
  publisher={IOP Publishing}
}

@book{englregularization,
  title={Regularization of inverse problems},
  author={Engl, Heinz W and Hanke, Martin and Neubauer, Andreas},
  volume={375},
  year={1996},
  publisher={Kluwer Academic Publishers}
}

@book{biau2015lectures,
  title={Lectures on the nearest neighbor method},
  author={Biau, G{\'e}rard and Devroye, Luc},
  volume={246},
  year={2015},
  publisher={Springer}
}

@inproceedings{chen2016xgboost,
  title={XGBoost: A scalable tree boosting system},
  author={Chen, Tianqi and Guestrin, Carlos},
  booktitle={Proceedings of the 22nd acm sigkdd international conference on knowledge discovery and data mining},
  pages={785--794},
  year={2016}
}

@article{nobel1996histogram,
  title={Histogram regression estimation using data-dependent partitions},
  author={Nobel, Andrew},
  journal={The Annals of Statistics},
  volume={24},
  number={3},
  pages={1084--1105},
  year={1996},
  publisher={Institute of Mathematical Statistics}
}

@article{devroye2018nearest,
  title={A nearest neighbor estimate of the residual variance},
  author={Devroye, Luc and Gy{\"o}rfi, L{\'a}szl{\'o} and Lugosi, G{\'a}bor and Walk, Harro},
  journal={Electronic Journal of Statistics},
  volume={12},
  pages={1752--1778},
  year={2018}
}

@article{wager2015adaptive,
  title={Adaptive concentration of regression trees, with application to random forests},
  author={Wager, Stefan and Walther, Guenther},
  journal={arXiv preprint arXiv:1503.06388},
  year={2015}
}

@article{breiman2001random,
  title={Random forests},
  author={Breiman, Leo},
  journal={Machine Learning},
  volume={45},
  pages={5--32},
  year={2001},
  publisher={Springer}
}

\end{document}